\numberwithin{equation}{section}
\crefname{section}{Section}{Sections}
\crefname{figure}{Figure}{Figures}
\crefname{theorem}{Theorem}{Theorems}
\crefname{lemma}{Lemma}{Lemmas}
\crefname{proposition}{Proposition}{Propositions}
\crefname{corollary}{Corollary}{Corollaries}
\crefname{definition}{Definition}{Definitions}
\crefname{example}{Example}{Examples}
\crefname{remark}{Remark}{Remarks}
\newtheorem{theorem}{Theorem}[section]
\newtheorem{lemma}[theorem]{Lemma}
\newtheorem{corollary}[theorem]{Corollary}
\theoremstyle{definition}
\theoremstyle{remark}
\newtheorem{remark}[theorem]{Remark}
\providecommand*{\abs}[1]{\lvert#1\rvert}
\providecommand*{\norm}[1]{\lVert#1\rVert}
\newcommand*{\defeq}{\mathrel{\mathop:}=}
\renewcommand*{\Re}{\operatorname{Re}}
\renewcommand*{\Im}{\operatorname{Im}}
\newcommand*{\hcap}{\operatorname{hcap}}
\renewcommand{\ge}{\geqslant}
\renewcommand{\le}{\leqslant}
\newcommand{\nn}{\nonumber}
\newcommand{\wt}{\widetilde}
\newcommand{\wh}{\widehat}
\newcommand{\dbtilde}[1]{\accentset{\approx}{#1}}
\DeclareMathOperator{\Law}{Law}
\DeclareMathOperator{\spn}{span}
\DeclareMathOperator\supp{supp}
\DeclareMathOperator*{\argmin}{arg\,min} % thin space, limits underneath in displays
\DeclareMathOperator*{\argmax}{arg\,max} % thin space, limits underneath in displays
\def\E{\hskip.15ex\mathsf{E}\hskip.10ex}
\def\P{\mathsf{P}}
\def\eps{\varepsilon}
\def\phi{\varphi}
\renewcommand{\d}{\partial}
\newcommand*{\pr}{\P}
\newcommand*{\ex}{\E}
\newcommand{\CC}{\mathcal{C}}
\renewcommand{\H}{\mathbb H}
\newcommand{\R}{\mathbb{R}}
\newcommand{\Z}{\mathbb{Z}}
\newcommand{\N}{\mathbb{N}}
\newcommand*{\NN}{\mathbb N}
\newcommand*{\RR}{\mathbb R}
\newcommand*{\HH}{\H}
\newcommand*{\barH}{\overline\HH}
\newcommand*{\ind}{\mathbbm 1}
\newcommand*{\slek}{SLE$_\kappa$}
\title{Law of the SLE tip}
\author{Oleg Butkovsky%
	\thanks{Weierstrass Institute, Mohrenstrasse 39, 10117 Berlin, FRG. Email: \texttt{oleg.butkovskiy@gmail.com}}
		\setcounter{footnote}{3}
	\and
	Vlad Margarint%
	\thanks{University of North Carolina at Charlotte. Email: \texttt{vmargari@uncc.edu}}
	\and
	Yizheng Yuan
	\thanks{Technische Universit\"at Berlin, Stra\ss{}e des 17. Juni 136, 10623 Berlin. Email: \texttt{yuan@math.tu-berlin.de}}
}
\begin{document}

\maketitle

\begin{abstract}
We analyse the law of the SLE tip at a fixed time in capacity parametrization. We describe it as the stationary law of a suitable diffusion process, and show that it has a density which is the unique solution (up to a multiplicative constant) of a certain PDE.
Moreover, we identify the phases in which the even negative moments of the imaginary value are finite. For the negative second and negative fourth moments we provide closed-form expressions.
%in the regime of parameters $\kappa \in \mathbb{R}_+$ in which they are finite. In particular, we obtain the phase-transition in the finiteness of the negative fourth moment at $\kappa=8/3$. We hope this observation will give new insights in the study of the scaling limit of the Self-Avoiding Walk, conjectured to correspond to $SLE_{8/3}.$

\end{abstract}

\section{Introduction}
The Schramm-Loewner evolution (SLE$_\kappa$) is a family of random planar fractal curves indexed by the real parameter $\kappa\geq 0$, introduced by Schramm in \cite{schramm2000scaling}. These random fractal curves are proved to describe scaling limits of a number of discrete models that are of great interest in planar statistical physics. For instance, it was proved in \cite{lsw2004conformal}  that the scaling limit of loop-erased random walk (with the loops erased in a chronological order) converges in the scaling limit to SLE$_\kappa$ with  $\kappa = 2\,.$ Moreover, other two-dimensional discrete models from statistical mechanics including Ising model cluster boundaries, Gaussian free field interfaces, percolation on the triangular lattice at critical probability, and uniform spanning tree Peano curves were proved to converge in the scaling limit to SLE$_\kappa$ for values of $\kappa=3,$ $\kappa=4,$ $\kappa=6$ and $\kappa=8$ respectively in the series of works \cite{smirnov2010conformal}, \cite{schramm2009contour}, \cite{smirnov2001critical}  and \cite{lsw2004conformal}. There are also other models of statistical physics in 2D that are conjectured to have SLE$_\kappa$, for some value of $\kappa$, as a scaling limit, among which is the two-dimensional self-avoiding walk which is conjectured to converge in the scaling limit to SLE$_{8/3}$.
For a detailed exposure and pedagogical introduction to SLE theory, we refer the reader to \cite{rs2005basic}, \cite{lawler2005conformally}, and \cite{kemppainen2017schramm}.

Questions concerning the behaviour of the SLE trace at the tip can be found in the existing body of SLE literature, for example in \cite{viklund2012almost} where the almost sure multi-fractal spectrum of the SLE trace near its tip is computed, and in \cite{zhan2016ergodicity} in which the ergodic properties of the harmonic measure near the tip of the SLE trace are studied.

% In \cite{lyons2019convergence} the authors give a conjecture to a possible law of the tip of the SLE trace at a fixed time int he capacity parametrisation, whose density is obtained precisely (i.e. a closed-form expression) using a random-time change of the backward Loewener differential equation. 

However, to the best of our knowledge, the law of the SLE tip at fixed capacity time has not been studied in the SLE literature until very recently. One of the first papers in this direction is \cite{lyons2019convergence} where a method based on  stopping times was applied in order to try to 
deduce information about the   law of the SLE tip.
%the authors %identify families of random times on which the law of the arguments of points under the backward SLE flow converge to a closed-form expression measure (that for $\kappa=4$ is exactly the uniform measure on $[0,\pi]$). Given the structure of the random times used in the proof, the authors
%conjectured that the law of the SLE tip at fixed capacity time is given by the  same closed-form expression they provided in the paper.

In this article, we develop an approach that allows for an in-depth study of this fundamental quantity. More precisely, we derive a PDE whose unique solution is the density of the SLE tip. This allows us to obtain explicit values for the negative second and negative fourth moment of the imaginary value of the SLE tip. We deduce that they are finite only for $\kappa< 8$ resp. $\kappa < 8/3$. For further negative moments, we identify the values of $\kappa$ where the moments are finite. 

To obtain these results we combine PDE techniques with certain tools from the theory of stochastic stability of stochastic differential equations (SDEs). Namely, we work with an SDE obtained from the backward Loewner differential equation. By a scaling argument, we derive a two-dimensional diffusion process that converges in law to the SLE tip. Using tools from ergodic theory (in the spirit of \cite{MSH}), we prove that this diffusion process has a unique invariant measure.  This allows us to show that the density of the SLE tip solves the Fokker-Planck-Kolmogorov (FPK) equation associated with the process.

Showing that the density of SLE tip is the unique solution of the FPK equation requires further tools. Note that while there is a vast literature on FPK equations (see e.g. \cite{BKRS}), usually only the case of elliptic operators are considered, while our FPK is hypoelliptic. Therefore, to show uniqueness of solutions to this equation and derive the support of the solution we utilise the generalized  Ambrosio-Figalli-Trevisan superposition principle obtained recently in \cite{BRS} as well as more standard methods such as Lyapunov functions and Harnack inequalities.

This paper is organised in three sections, the first one being the introduction. In the second section we state the main results. In the last section which is further divided in two subsections we give their proofs.

\medskip
\noindent\textbf{Convention on constants.}  Throughout the paper $C$ denotes a positive constant whose
value may change from line to line.

\medskip
\noindent\textbf{Acknowledgements.} The authors are  deeply indebted to Stas Shaposhnikov for his help, patience, detailed explanations of some parts of the theory of FPK equations and for suggesting some useful ideas for the proofs. We are very grateful to Paolo Pigato and Peter Friz for fruitful discussions. We also would like to express our deep gratitude to the referee for thoroughly reading the paper and for offering very valuable suggestions. OB has received
funding from the European Research Council (ERC) under the European Union’s Horizon 2020 research and innovation program (grant agreement No. 683164), from the DFG Research Unit FOR 2402, and is funded by the Deutsche Forschungsgemeinschaft (DFG, German Research Foundation) under Germany's Excellence Strategy --- The Berlin Mathematics Research Center MATH+ (EXC-2046/1, project ID: 390685689, sub-project EF1-22). YY acknowledges partial support from ERC through Consolidator Grant 683164 (PI: Peter Friz).

\section{Main results}

First, let us introduce the basic notation. For a domain $D\subset\R^k$, $k\ge1$, let $\CC^\infty(D,\R)$ be a set of functions $D\to\R$ which have derivatives of all orders. The set of functions from $\CC^\infty(D,\R)$  which are bounded and have bounded derivatives of all orders will be denoted by $\CC_b^\infty(D,\R)$.  As usual, for a function  $f\colon D\to\R^d$, $d\ge1$, we will denote its supremum norm by 
$\|f\|_{\infty}:=\sup_{x\in D} |f(x)|$. 
Let $\H$ be  the open complex upper half-plane $\{\Im(z) > 0\}$.

Until the end of the paper we fix $\kappa\in(0,\infty)$. Let $g_t\colon H_t\to\H$, $t\ge0$, be the forward SLE flow, that is the solution to the Loewner ODE
\begin{equation*}
\partial_t g_t(z) = \frac{2}{g_t(z)-\sqrt{\kappa} B_t}, \quad g_0(z)=z,\quad t\ge0,\, z\in\H,
\end{equation*}
where $B$ is a standard Brownian motion, $H_t = \{z \in \HH\mid T_z > t\}$, and $T_z$ is the time until which the ODE is solvable. Let $(\gamma_t)_{t\ge0}$ be the \slek\ path associated with this flow. It is well-known  \cite[Theorem 3.6]{rs2005basic}, \cite[Theorem 4.7]{lsw2004conformal} that $\P$-a.s. for any $t\ge0$
\begin{equation}\label{SLElim}
\gamma(t)=\lim_{u\to0+}g_t^{-1}(\sqrt\kappa B_t+iu).
\end{equation}
Throughout the paper we use the notations $\gamma(t)$ and $\gamma_t$ interchangeably.

Our main result is the following statement.

\begin{theorem}\label{T:main}
The random vector $(\Re(\gamma_1),\Im(\gamma_1))$ has a density $\psi\in\CC^{\infty}(\R\times(0,\infty),\R)$ which is the unique solution in the class of probability densities (non-negative functions that integrate to $1$ over the whole space) of the following PDE:
\begin{equation}\label{mainPDE}
\frac{\kappa}{2}\d^2_{xx}\psi+\left(\frac12x+\frac{2x}{x^2+y^2}\right)\d_x \psi+
\left(\frac12y-\frac{2y}{x^2+y^2}\right)\d_y\psi+\left(1+\frac{4(y^2-x^2)}{(x^2+y^2)^2}\right)\psi = 0,
\end{equation}
where $x\in\R$, $y\in(0,\infty)$.

Furthermore, $\psi$ is strictly positive in $\R\times(0,2)$, $\psi\equiv 0$ on $\R\times[2,\infty)$, and $\psi(x,y)=\psi(-x,y)$ for $x\in\R$, $y>0$. 
\end{theorem}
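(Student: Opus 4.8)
I would realise $\Law(\Re\gamma_1,\Im\gamma_1)$ as the stationary law of a two-dimensional diffusion and read off all of the assertions from that diffusion. \emph{Step 1 (reduction to a diffusion).} Fix $t>0$ and reverse the forward Loewner chain driven by $\xi_s=\sqrt\kappa B_s$, $s\in[0,t]$: at the level of the ODE flow one checks that $g_t^{-1}$ is the time‑$t$ map of the backward Loewner flow driven by $r\mapsto\xi_{t-r}$, which is again a Brownian path. Since this driver vanishes at $r=t$, the tip $\gamma_t=\lim_{u\downarrow0}g_t^{-1}(\xi_t+iu)$ equals $\lim_{u\downarrow0}Z^u_t$, where $Z^u$ solves the autonomous SDE $dZ_r=-\tfrac{2}{Z_r}\,dr-\sqrt\kappa\,d\beta_r$ from $Z^u_0=iu$, $\beta$ a standard Brownian motion; letting $u\downarrow0$ gives a limit process $Z=X+iY$ with $\Im Z_r>0$ for $r>0$ (the imaginary part increases and $0$ is repelling for it), and $(\Re\gamma_t,\Im\gamma_t)\stackrel{d}{=}(X_t,Y_t)$. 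Rescaling, $\wt Z_s:=e^{-s/2}Z_{e^s}$ (after rewriting the noise as $\sqrt\kappa\,d\wt W_s$ for a standard Brownian motion $\wt W$) solves the time‑homogeneous SDE on $\R\times(0,\infty)$
\begin{equation*}
d\wt Z_s=\Bigl(-\tfrac12\wt Z_s-\tfrac{2}{\wt Z_s}\Bigr)\,ds-\sqrt\kappa\,d\wt W_s ,
\end{equation*}
i.e.\ a diffusion $(U_s,V_s)$ with generator $\mathcal L f=\tfrac\kappa2\d^2_{xx}f+b_1\d_xf+b_2\d_yf$, $b_1=-\tfrac12x-\tfrac{2x}{x^2+y^2}$, $b_2=-\tfrac12y+\tfrac{2y}{x^2+y^2}$. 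By SLE scaling $\gamma_{e^s}\stackrel{d}{=}e^{s/2}\gamma_1$, so $\wt Z_s\stackrel{d}{=}\gamma_1$ for every $s$, and hence $\nu:=\Law(\Re\gamma_1,\Im\gamma_1)$ is an invariant probability measure of $(U,V)$. A direct computation identifies $\mathcal L^\ast\psi:=\tfrac\kappa2\d^2_{xx}\psi-\d_x(b_1\psi)-\d_y(b_2\psi)$ with the left‑hand side of \eqref{mainPDE}.

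\emph{Step 2 (smooth density, the PDE, unique invariant measure).} The noise acts only in the $x$‑direction, but Hörmander's bracket condition holds on all of $\R\times(0,\infty)$: with $X_0=b_1\d_x+b_2\d_y$, $X_1=\sqrt\kappa\,\d_x$, the bracket $[X_1,X_0]$ has $\d_y$‑coefficient $\sqrt\kappa\,\d_xb_2=-4\sqrt\kappa\,xy/(x^2+y^2)^2$, which is nonzero off $\{x=0\}$, while on $\{x=0\}$ the double bracket $[X_1,[X_1,X_0]]$ has $\d_y$‑coefficient $\kappa\,\d^2_{xx}b_2=-4\kappa/y^3\ne0$. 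Hence $(U,V)$ has smooth transition densities, so $\nu$ admits a density $\psi\in\CC^\infty(\R\times(0,\infty),\R)$, and stationarity gives $\mathcal L^\ast\psi=0$, i.e.\ \eqref{mainPDE}. Next, $x^2+y^2-\log y$ is a Lyapunov function for $\mathcal L$ ($\mathcal Lf\to-\infty$ as $|z|\to\infty$ or $y\downarrow0$), giving non‑explosion and non‑attainment of the line $\{y=0\}$; combined with irreducibility — obtained from the Hörmander condition via a Harnack inequality / the Stroock–Varadhan support theorem — this yields uniqueness of the invariant measure of $(U,V)$, so $\nu$ is \emph{the} invariant measure.

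\emph{Step 3 (uniqueness among probability densities).} This is the crux, because \eqref{mainPDE} is hypoelliptic rather than elliptic, so the standard uniqueness theory for FPK equations (\cite{BKRS}) does not apply. Let $\varphi\ge0$ with $\int\varphi=1$ solve \eqref{mainPDE}; by hypoellipticity $\varphi$ is smooth. Viewing $t\mapsto\varphi\,dx\,dy$ as a (constant‑in‑time) solution of the FPK flow for $\mathcal L^\ast$, the generalized Ambrosio–Figalli–Trevisan superposition principle of \cite{BRS} — whose relaxed integrability hypotheses are exactly what is needed to accommodate the singularity of $b$ near $\{y=0\}$ and its linear growth at $\infty$ — produces a stationary probability measure on $C([0,\infty),\R\times(0,\infty))$ concentrated on solutions of the martingale problem for $\mathcal L$ with all one‑dimensional marginals equal to $\varphi\,dx\,dy$. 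Since the coefficients are locally Lipschitz on $\R\times(0,\infty)$ and the Lyapunov function from Step 2 excludes explosion and excursions to $\{y=0\}$, the martingale problem is well posed, so this measure is the unique stationary solution and $\varphi\,dx\,dy$ is an invariant measure of $(U,V)$, hence $=\nu$. Thus $\varphi=\psi$: $\Law(\gamma_1)$ has density $\psi$, and $\psi$ is the unique probability‑density solution of \eqref{mainPDE}. I expect this step to be the main obstacle — establishing well‑posedness of the martingale problem and uniqueness of the invariant measure for a degenerate SDE whose drift is simultaneously singular along $\{y=0\}$ and of linear growth, and invoking a superposition principle strong enough to cover it; by comparison, the hypoellipticity arguments and the bound $\Im\gamma_1<2$ below are routine.

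\emph{Step 4 (support and symmetry).} For the support I would argue from the backward flow directly: along it $\d_r\Im h_r=2\Im h_r/|h_r-\xi_r|^2\le 2/\Im h_r$, so $(\Im h_r)^2\le(\Im h_0)^2+4r$, and letting $\Im h_0\downarrow0$ gives $\Im\gamma_1\le 2$, with strict inequality almost surely because $\Re h_r=\xi_r$ only on a Lebesgue‑null set of times. Hence $\P(\Im\gamma_1\ge 2)=0$, so $\psi=0$ a.e., and by continuity identically, on $\R\times[2,\infty)$. On the connected open set $\R\times(0,2)$ the nonnegative solution $\psi$ is not identically $0$ (it integrates to $1$), so by the Harnack inequality for the hypoelliptic operator $\mathcal L^\ast$ — equivalently, because the control system $\dot u=b_1+\sqrt\kappa\,w$, $\dot v=b_2$ steers between any two points of $\R\times(0,2)$ (keeping $u$ near $0$ drives $v$ up toward $2$; large $|u|$ drives $v$ down toward $0$), so the support theorem gives a strictly positive transition, hence stationary, density there — $\psi>0$ on $\R\times(0,2)$. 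Finally, every coefficient in \eqref{mainPDE} is even in $x$, so $(x,y)\mapsto\psi(-x,y)$ is again a probability‑density solution; by the uniqueness just proved, $\psi(-x,y)=\psi(x,y)$. (Alternatively, $-B$ is also a Brownian motion, which reflects the whole \slek\ across the imaginary axis and gives $(\Re\gamma_1,\Im\gamma_1)\stackrel{d}{=}(-\Re\gamma_1,\Im\gamma_1)$.)
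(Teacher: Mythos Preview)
Your outline is essentially the paper's proof: rescale the centred backward Loewner flow to a time-homogeneous diffusion, show $\Law(\gamma_1)$ is its unique invariant law via H\"ormander plus a control/support argument, and upgrade to uniqueness among probability densities via the Ambrosio--Figalli--Trevisan superposition principle of \cite{BRS}. The only notable difference is that the paper works in coordinates $(x,\log y)$, so the state space becomes all of $\R^2$ with globally smooth drift---this lets it apply H\"ormander after an innocuous cutoff, establish invariance by convergence from the smooth initial point $h_1(i)$ rather than by starting the diffusion from the singular point $0$, and prove strict positivity via the parabolic Harnack inequality (the FPK equation is genuinely parabolic in vertical strips) together with a degenerate-elliptic maximum principle, rather than invoking a hypoelliptic Harnack inequality directly.
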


We have attached in \cref{fi:simul} numerical simulations of $\gamma(1)$ with various values of $\kappa$. There we have chosen the coordinates $(\alpha,y)$ where $\alpha = \arg\gamma(1)$ and $y=\Im\gamma(1)$ so that they fit well in the plot.

\begin{figure}[h]
	\centering
	\includegraphics[width=0.49\textwidth]{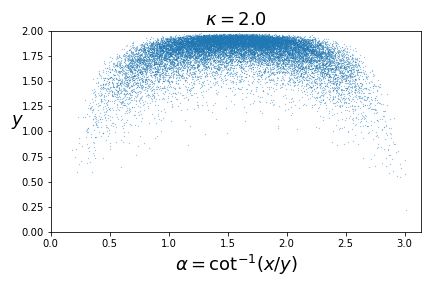}
	\includegraphics[width=0.49\textwidth]{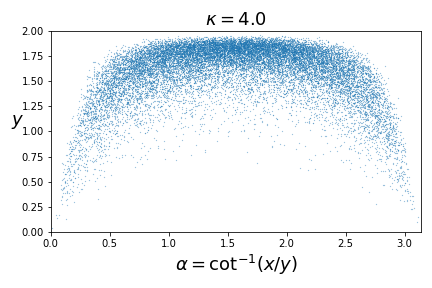}
	\includegraphics[width=0.49\textwidth]{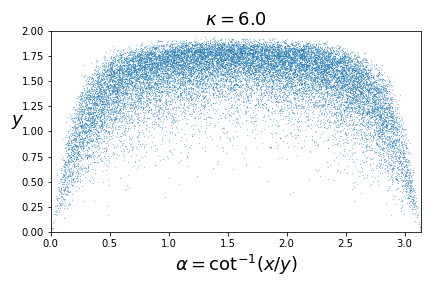}
	\includegraphics[width=0.49\textwidth]{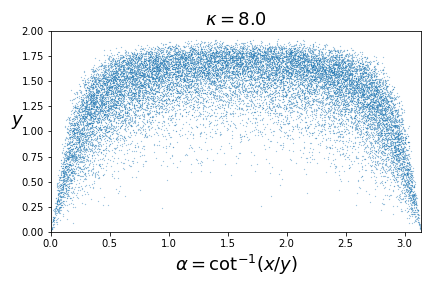}
	\caption{Simulation of $\gamma(1)$ with $20000$ samples each. Plotted are the coordinates $(\alpha,y)$ where $\alpha = \arg\gamma(1)$ and $y=\Im\gamma(1)$.}
	\label{fi:simul}
\end{figure}

As an application of \cref{T:main}, we show that the following quantities can be explicitly calculated.

\begin{theorem}\label{T:other}
The following holds:
\begin{enumerate}[\rm{(}i\rm{)}]
\item For any measurable set $\Lambda\subset\barH$ one has
\begin{equation}\label{eq:zhan_law}
		\ex \int_0^\infty \ind(\gamma(t) \in \Lambda) \, dt = \frac{\Gamma(1+\frac4\kappa)}{2\sqrt\pi\Gamma(\frac12+\frac4\kappa)} \int_\Lambda \left( 1+\frac{x^2}{y^2} \right)^{-4/\kappa} \,dx\,dy.
\end{equation}
\item For any $n\in\N$ we have 
\begin{equation}\label{ineq24}
\E (\Im\gamma_1)^{-2n}<\infty \,\,\text{if and only if $\kappa<8/(2n-1)$}.
\end{equation}
Further,
\begin{align}
&\E(\Im\gamma_1)^{-2}=\frac{2}{8-\kappa}\quad\text{for $\kappa < 8$}\label{ineq25} \\
&\E(\Im\gamma_1)^{-4}=\frac{16(3-\kappa)}{(12-\kappa)(8-\kappa)(8-3\kappa)}\quad\text{for $\kappa < 8/3$}.\label{ineq26}
\end{align}
\end{enumerate}
\end{theorem}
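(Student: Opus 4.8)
The starting point is the exact scaling of SLE: from Loewner's equation together with Brownian scaling one has $\bigl(\lambda^{-1}\gamma(\lambda^2 t)\bigr)_{t\ge0}\overset{d}{=}\bigl(\gamma(t)\bigr)_{t\ge0}$ for every $\lambda>0$, hence $\gamma(t)\overset{d}{=}\sqrt{t}\,\gamma_1$ for each fixed $t>0$. Combined with Fubini this reduces both parts to computations with the density $\psi$ of \cref{T:main}. For part~(i), write $\mu(\Lambda):=\ex\int_0^\infty\ind(\gamma(t)\in\Lambda)\,dt=\int_0^\infty\pr\bigl(\gamma_1\in t^{-1/2}\Lambda\bigr)\,dt$; substituting $t=r^2$ and performing the change of variables $(a,b,r)\mapsto(x,y,\rho)$ with $x=ra$, $y=rb$, $\rho=(a^2+b^2)^{1/2}$ (Jacobian $r\rho$) shows that $\mu$ has a density on $\H$,
\[
f(x,y)=2g(\theta),\qquad \theta=\arg(x+iy),\qquad g(\theta):=\int_0^\infty\psi(\rho\cos\theta,\rho\sin\theta)\,\frac{d\rho}{\rho},
\]
which is homogeneous of degree $0$ and satisfies $g(\pi-\theta)=g(\theta)$ by the symmetry in \cref{T:main}. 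For part~(ii), plain polar coordinates give $\ex(\Im\gamma_1)^{-2n}=\int_0^\pi\sin^{-2n}\theta\,H_n(\theta)\,d\theta$ where $H_n(\theta):=\int_0^\infty\psi(\rho\cos\theta,\rho\sin\theta)\,\rho^{1-2n}\,d\rho$, and $H_1=g$; similarly $\ex|\gamma_1|^{-2}=\int_0^\pi g(\theta)\,d\theta$.

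To finish part~(i) it remains to identify $g$. Apply $\int_0^\infty(\,\cdot\,)\,\rho\,d\rho$ to \eqref{mainPDE} along the ray of argument $\theta$ (writing $\partial_{xx}$ and the drift vector fields in polar coordinates, and integrating by parts in $\rho$; the boundary terms vanish since $\psi$ is supported in $\{y\le2\}$ and tends to $0$ at the origin). The decisive point is that the dilation part contributes $\int_0^\infty\bigl(\tfrac12\rho\,\partial_\rho\psi+\psi\bigr)\rho\,d\rho=0$, so what survives is a \emph{closed} second-order linear ODE,
\[
\tfrac{\kappa}{2}\sin^2\theta\;g''(\theta)+\bigl(\tfrac{\kappa}{2}-2\bigr)\sin2\theta\;g'(\theta)-4\cos2\theta\;g(\theta)=0 .
\]
Its indicial exponents at $\theta=0$ (and at $\theta=\pi$) are $8/\kappa$ and $-1$; the exponent $-1$ yields a solution blowing up like $\theta^{-1}$, incompatible with $f$ being the density of a locally finite measure, so $g(\theta)=c_0\sin^{8/\kappa}\theta$ and $f(x,y)=c\,(1+x^2/y^2)^{-4/\kappa}$ with $c=2c_0$. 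The constant is then pinned down by testing \eqref{mainPDE} against $\log y$: multiplying by $\log y$, integrating over $\H$ and integrating by parts, every term carrying a factor $\log y$ cancels (because $\partial_{xx}\log y=0$ and the $x$- and $y$-integrations produce the same $\log y$-coefficient up to sign), leaving $2\ex|\gamma_1|^{-2}-\tfrac12+(\text{boundary term at }\{y=0\})=0$; the boundary term vanishes because the finiteness of $f$ forces $\psi$ to decay at the real axis faster than $1/|\log y|$, so $\ex|\gamma_1|^{-2}=\tfrac14$. Since $\ex|\gamma_1|^{-2}=\int_0^\pi g=c_0\sqrt{\pi}\,\Gamma(\tfrac12+\tfrac4\kappa)/\Gamma(1+\tfrac4\kappa)$, this determines $c_0$, hence the value of $c$ in \eqref{eq:zhan_law}, and \eqref{eq:zhan_law} follows.

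For part~(ii), the case $n=1$ is immediate from part~(i): $\ex(\Im\gamma_1)^{-2}=c_0\int_0^\pi\sin^{8/\kappa-2}\theta\,d\theta$, finite exactly for $\kappa<8$, and equal to $\tfrac{2}{8-\kappa}$ after the bookkeeping $\Gamma(1+\tfrac4\kappa)=\tfrac4\kappa\Gamma(\tfrac4\kappa)$, $\Gamma(\tfrac12+\tfrac4\kappa)=\tfrac{8-\kappa}{2\kappa}\Gamma(\tfrac4\kappa-\tfrac12)$; this gives \eqref{ineq25} and the $n=1$ case of \eqref{ineq24}. For $n\ge2$, the same manipulation that produced the ODE for $g$ — now applying $\int_0^\infty(\,\cdot\,)\,\rho^{3-2n}\,d\rho$ to \eqref{mainPDE} — yields an \emph{inhomogeneous} second-order linear ODE $\mathcal{L}_n H_n=-(n-1)H_{n-1}$; solving it, selecting the solution that keeps the relevant local finiteness, and evaluating $\int_0^\pi\sin^{-2n}\theta\,H_n(\theta)\,d\theta$ gives the closed forms, in particular \eqref{ineq26} for $n=2$. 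Finally the dichotomy \eqref{ineq24} is the boundary exponent: one shows from \eqref{mainPDE} — an indicial analysis at the characteristic line $\{y=0\}$ and at the corner $0$, using positivity and smoothness of $\psi$ from \cref{T:main} together with interior Harnack estimates — that $\psi(x,y)$ behaves like $A(x)\,y^{8/\kappa}$ as $y\to0$ with $A>0$; consequently $H_n$ is finite and behaves like $\theta^{8/\kappa}$ near $\theta=0$, so $\int_0^\pi\sin^{-2n}\theta\,H_n(\theta)\,d\theta<\infty$ precisely when $8/\kappa-2n>-1$, while the lower bound $A>0$ makes the integral genuinely infinite otherwise.

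The main obstacle is exactly this last point, along with the rigorous justification of the Fubini/change-of-variables/integration-by-parts steps: the operator in \eqref{mainPDE} is hypoelliptic and degenerate at $\{y=0\}$ (elliptic only in $x$, with the $y$-drift $\tfrac{y}{2}\partial_y$ pointing into $\H$ and no $\partial_{yy}$), so controlling the precise decay of $\psi$ at the real axis and at the origin is what legitimises testing \eqref{mainPDE} against the unbounded functions $\rho$, $\rho^{3-2n}$, $\log y$ (hence the identity $\ex|\gamma_1|^{-2}=\tfrac14$ and the constant in \eqref{eq:zhan_law}), and simultaneously determines the thresholds in \eqref{ineq24}. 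The remaining ingredients — the explicit solution of the ODEs and the Beta/Gamma computations — are routine.
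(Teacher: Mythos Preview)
Your overall architecture is the paper's: integrate the FPK equation \eqref{mainPDE} against radial weights to obtain second-order ODEs in the angle, solve them, and fix the constant via the identity $\E|\gamma_1|^{-2}=\tfrac14$ (this is exactly the paper's Lemma~3.12 with $n=0$). Your ODE for $g$ and its indicial exponents are correct, and after the change $g(\theta)=\sin^2\theta\,I_1(\theta)$ your function $g$ coincides with the paper's $I_1$; likewise your $H_n$ are reparametrisations of the paper's $I_n$.

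The genuine gap is precisely the step you flag as ``the main obstacle'' and then do not carry out. You repeatedly use that boundary terms vanish ``since $\psi$ tends to $0$ at the origin'', and later propose to prove the pointwise asymptotic $\psi(x,y)\sim A(x)\,y^{8/\kappa}$ by indicial analysis and Harnack estimates. Neither is established anywhere, and for this degenerate (only $\partial_{xx}$, no $\partial_{yy}$) hypoelliptic operator such pointwise boundary asymptotics are not a routine consequence of \cref{T:main}; smoothness of $\psi$ is only asserted on the open set $\R\times(0,\infty)$. Without this, you cannot even assert a~priori that $g(\theta)$ or $H_n(\theta)$ is finite, let alone differentiate under the integral and discard the $\rho\downarrow 0$ boundary contributions in your integrations by parts. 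The same circularity affects your derivation of $\E|\gamma_1|^{-2}=\tfrac14$: testing against $\log y$ is formally correct (indeed $L\log y=-\tfrac12+2|z|^{-2}$), but the $\{y=0\}$ boundary term is dismissed by appealing to ``finiteness of $f$'', which itself presupposes the very decay you are trying to justify.

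The paper never proves pointwise asymptotics of $\psi$ near $\{y=0\}$. Instead it works throughout with the truncated integrals $I_n^\varepsilon(\alpha)=\int_\varepsilon^4 u^{-n}\phi(\alpha,u)\,du$ and avoids the boundary in two ways. First, to obtain $\E|\gamma_1|^{-2}=\tfrac14$ (their (3.21)) it integrates the FPK equation \emph{in the angular variable first}, so that the second-order term becomes a total $\alpha$-derivative and the only boundary contributions are at $\alpha=\delta$; a subsequence lemma (Corollary~3.10, built on Lemma~3.9) then selects $\delta_k\searrow0$ along which these terms vanish, without any information on $\psi$ near $y=0$. Second, to pass from the truncated ODE for $I_{n+1}^\varepsilon$ to the ODE for $I_{n+1}$, it uses a soft Arzel\`a--Ascoli/compactness argument (Lemma~3.8) together with (3.21) to kill the single $\varepsilon$-boundary term along a subsequence. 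The dichotomy \eqref{ineq24} is then obtained not from $\psi$ but by an induction on $n$ that extracts the leading behaviour $I_n(\alpha)\approx\alpha^{8/\kappa-2n}$ directly from the recursive ODE (Lemma~3.14), again via carefully chosen subsequences. If you want to make your plan rigorous, this subsequence machinery (or a direct proof of the $\psi\sim A(x)y^{8/\kappa}$ asymptotic, which would be strictly stronger than anything the paper proves) is what is missing.
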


\begin{remark}
Note that the left-hand side of \eqref{eq:zhan_law} is an average amount of time SLE spends in a set $\Lambda$. A version of this identity has previously appeared in \cite[Corollary 5.3]{zhan2019decomposition}. However, in that paper the constant in front of the integral has been implicitly specified as $1/C_{\kappa,1}$ with
    \[ C_{\kappa,1} = \int_\HH \left( M_0(z)-\ex[M_1(z)\ind_{T_z > 1}] \right) \,dx\,dy \]
    and $M_t(z) = \abs{g_t'(z)}^2 \left(\frac{\Im g_t(z)}{\abs{g_t(z)-\sqrt{\kappa}B_t}}\right)^{8/\kappa}$. In particular, our result implies
    \[ C_{\kappa,1} = \frac{2\sqrt\pi\Gamma(\frac12+\frac4\kappa)}{\Gamma(1+\frac4\kappa)} \]
\end{remark}

As we will point out in \cref{se:density_analysis}, our \cref{T:other} may seem like a simple consequence of \cref{T:main} that can be heuristically deduced from integration by parts arguments. However, it is surprisingly tricky to control the boundary behaviour of $\psi$ and its derivatives. Therefore it requires more work to rigorously prove \cref{T:other}.

One of our initial motivations was to know more about the marginal law of $\alpha = \arg\gamma(1)$. We believe that the marginal density should behave like $\alpha^{8/\kappa}$ as $\alpha \searrow 0$. We did not succeed in proving this; instead, we prove the following in \cref{se:density_analysis}. Denote $(\alpha, y) = (\arg\gamma(1), \Im\gamma(1))$ and let $q(\alpha, y) = \psi(y\cot\alpha,y)\frac{y}{\sin^2\alpha}$ the density in these coordinates. Then for $n \ge 1$ we have $\int_0^2 y^{-2n} q(\alpha,y)\,dy \approx \alpha^{8/\kappa-2n}$ as $\alpha \searrow 0$.

\begin{remark}
    The support of the density is quite natural since the half-plane capacity of $\gamma[0,t]$ is always at least $\frac{1}{2}\Im\gamma(t)^2$, and hence we always have $\Im\gamma(t) \le \sqrt{2\hcap(\gamma[0,t])} = 2\sqrt{t}$. Note also that $\Im\gamma(t) = 2\sqrt{t}$ is only attained by SLE$_0$, i.e. $\gamma(t) = i2\sqrt{t}$ which is driven by the constant driving function.
\end{remark}

To obtain these results we establish the following lemma which links the law of SLE$_\kappa$ with invariant measure of a certain diffusion process. Introduce the reverse SLE flow
\begin{equation}\label{hdef}
\partial_t h_t(z) = \frac{-2}{h_t(z)-\sqrt{\kappa}\wt B_t}, \quad h_0(z)=z,
\quad t\ge0,\, z\in\H;
\end{equation}
where $\wt B$ is the time-reversed Brownian motion, that is,
\begin{equation}\label{RBM}
\wt B_t:= B_{1-t}- B_1\quad\text{for $t\le1$;}\quad  \wt B_t:= B'_{t-1}-B_1\quad\text{for $t\ge1$},	
\end{equation}
where $B'$ is a Brownian motion independent of $B$. It is obvious that $\wt B$ is a Brownian motion.

\begin{lemma}\label{L:main}We have
\begin{equation*}
\frac1{\sqrt t}(h_{t}(i)- \sqrt{\kappa}\wt B_{t})\to\gamma(1)\,\,\text{in law as $t\to\infty$}.
\end{equation*}
\end{lemma}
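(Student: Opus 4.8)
The plan is to relate the reverse Loewner flow started at $i$ to the forward SLE trace via the well-known time-reversal identity, and then extract the limit by a Brownian scaling argument. First I would recall the basic reversibility of the Loewner equation on a finite interval: fixing time $1$, if $g_t$ solves the forward Loewner ODE driven by $\sqrt\kappa B$, then the map $z\mapsto g_1^{-1}(z+\sqrt\kappa B_1)$ can be written as a reverse Loewner flow run for time $1$ with driving function $\sqrt\kappa\wt B_t = \sqrt\kappa(B_{1-t}-B_1)$. Concretely, one checks that $\phi_t(z) \defeq g_{1-t}\bigl(g_1^{-1}(z+\sqrt\kappa B_1)\bigr) - \sqrt\kappa B_1$ solves $\partial_t\phi_t(z) = -2/(\phi_t(z)-\sqrt\kappa\wt B_t)$ with $\phi_0=\mathrm{id}$, i.e. $\phi_t = h_t$ in the notation of \eqref{hdef} (up to the usual care that the fields $\wt B_t$ for $t\le 1$ in \eqref{RBM} is exactly this time-reversed increment). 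Combining this with the limit characterization \eqref{SLElim} of the trace, $\gamma(1) = \lim_{u\to0+} g_1^{-1}(\sqrt\kappa B_1 + iu)$, and the continuity of $h_1$ up to the boundary point $0$ (applied along the segment $iu$, $u\downarrow 0$), gives the exact-in-law identity $h_1(i) - \sqrt\kappa\wt B_1 \overset{d}{=} \gamma(1)$ at the single time $t=1$.

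Next I would upgrade this to the scaling limit as $t\to\infty$ by exploiting the self-similarity of the setup. Brownian scaling gives that $(\tfrac1{\sqrt t}h_{ts}(\sqrt t\, z))_{s\ge0}$ is again a reverse Loewner flow driven by a rescaled Brownian motion $\sqrt\kappa\wt B^{(t)}_s \defeq \tfrac1{\sqrt t}\sqrt\kappa\wt B_{ts}$, which is again a standard (time-scaled) Brownian motion; equivalently $h_t(i)$ has the same law as $\sqrt t\, h_1(i/\sqrt t)$ after replacing $\wt B$ by an appropriate Brownian motion. So $\tfrac1{\sqrt t}(h_t(i)-\sqrt\kappa\wt B_t)$ has the law of $h^{(t)}_1(i/\sqrt t) - \sqrt\kappa\wt B^{(t)}_1$, i.e. a reverse Loewner flow run for unit time, driven by Brownian motion, but started from the point $i/\sqrt t$ rather than from $i$. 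As $t\to\infty$ the starting point $i/\sqrt t \to 0$, and by the above time-$1$ identity applied with starting point $iu$ in place of $i$ and letting $u=1/\sqrt t\downarrow 0$, this converges in law (using continuity of the flow/trace in the initial point, which is exactly \eqref{SLElim}) to $\gamma(1)$. Making the scaling bookkeeping precise — tracking that the reversed Brownian motion stays a Brownian motion after scaling and time-shifting — is the routine part.

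The main obstacle, in my view, is the continuity/convergence step: one must argue that $h_1(iu) - \sqrt\kappa\wt B_1 \to h_1(0+) - \sqrt\kappa\wt B_1 = \gamma(1)$ in law (indeed a.s. along the relevant coupling) as $u\downarrow 0$, i.e. that the reverse flow $h_1$ extends continuously to the boundary point $0$ and that this boundary value coincides with the SLE trace tip. This is precisely the content of the Rohde–Schramm / Lawler–Schramm–Werner results quoted around \eqref{SLElim}, but one has to make sure the identification survives the time-reversal and the scaling, and that the convergence is genuinely in law (not merely weakly along subsequences). A clean way is: establish the exact distributional identity $\tfrac1{\sqrt t}(h_t(i)-\sqrt\kappa\wt B_t) \overset{d}{=} g_1^{-1}(\sqrt\kappa B_1 + i/\sqrt t) - \sqrt\kappa B_1$ (valid for every $t>0$ with the same forward flow), and then let $t\to\infty$ inside the right-hand side, where \eqref{SLElim} gives a.s. convergence to $\gamma(1)$ hence convergence in law. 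This reduces the whole lemma to the single exact identity plus one application of \eqref{SLElim}, and avoids any delicate estimates.
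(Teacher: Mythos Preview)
Your approach is essentially identical to the paper's: establish the time-reversal identity $g_1^{-1}(\sqrt\kappa B_1+z)=h_1(z)-\sqrt\kappa\wt B_1$, combine it with Brownian scaling of the reverse flow to get an exact distributional identity for $\tfrac1{\sqrt t}(h_t(i)-\sqrt\kappa\wt B_t)$, and then pass to the limit using \eqref{SLElim}. The ``clean way'' you describe in the last paragraph is precisely what the paper does.

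Two slips to fix. First, the claim in your opening paragraph that $h_1(i)-\sqrt\kappa\wt B_1 \overset{d}{=} \gamma(1)$ is false: the time-reversal identity at $z=i$ gives $h_1(i)-\sqrt\kappa\wt B_1 = g_1^{-1}(\sqrt\kappa B_1+i)$, which is not $\gamma(1)$. You need $z=iu$ with $u\downarrow 0$ to reach $\gamma(1)$, and indeed this is exactly what the scaling step supplies (the starting point becomes $i/\sqrt t$). Fortunately you do not actually use the incorrect $t=1$ statement later, so the argument survives once this sentence is removed. Second, in your final distributional identity the term $-\sqrt\kappa B_1$ on the right should not be there: the correct statement is
\[
\tfrac1{\sqrt t}\bigl(h_t(i)-\sqrt\kappa\wt B_t\bigr)\ \overset{d}{=}\ g_1^{-1}\bigl(\sqrt\kappa B_1 + i/\sqrt t\bigr),
\]
since $h_1(z)-\sqrt\kappa\wt B_1 = h_1(z)+\sqrt\kappa B_1 = g_1^{-1}(\sqrt\kappa B_1+z)$ already. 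With these two corrections your proof matches the paper's.
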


\section{Proofs}

\subsection{Proofs of \cref{L:main,T:main}}

%\cref{L:main} is a simple consequence of the scaling property of the Loewner equation. %Details are spelled out below.
We begin with the proof of \cref{L:main}.
\begin{proof}[Proof of \cref{L:main}]
Introduce $\wh f_t(z):=g_t^{-1}(\sqrt\kappa B_t+z)$, $z\in\H$, $t\ge0$. We claim that \begin{equation}\label{claim}
\wh f_1(z) = h_1(z)+\sqrt{\kappa}B_1.
\end{equation}

Indeed, it follows from \eqref{hdef} that for $z\in\H$, $t\in[0,1]$
\begin{equation*}
\partial_t(h_{1-t}(z)+\sqrt{\kappa} B_1) = \frac{2}{h_{1-t}(z)-\sqrt{\kappa}\wt B_{1-t}}
 = \frac{2}{(h_{1-t}(z)+\sqrt{\kappa} B_1)-\sqrt{\kappa} B_t},
\end{equation*}
which implies $h_{1-t}(z)+\sqrt{\kappa}B_1 = g_t(h_1(z)+\sqrt{\kappa} B_1)$. Recalling the definition of $\wh f$ and taking $t=1$, we obtain \eqref{claim}.

Next, we note that the following scaling property holds: for any $c>0$
\begin{equation}
\label{samelaw}
\Law(\frac{1}{c}\sqrt{\kappa}\wt B_{c^2 }, \frac{1}{c}h_{c^2 }(cz))=\Law(\sqrt{\kappa}\wt B_1, h_1(z))).
\end{equation} 
Indeed, using again the definition of $h$ in \eqref{hdef}, we see that for any $t\ge0$
\begin{equation*}
\partial_t (\frac{1}{c}h_{c^2 t}(cz)) = \frac{-2c}{h_{c^2 t}(cz)-\sqrt{\kappa}\wt B_{c^2 t}}
= \frac{-2}{\frac{1}{c}h_{c^2 t}(cz)-\frac{1}{c}\sqrt{\kappa}\wt B_{c^2 t}} .
\end{equation*}
Since the process $(\frac{1}{c}\sqrt{\kappa}\wt B_{c^2 t})_{t \ge 0}$ has the same law as $(\sqrt{\kappa}\wt B_t)_{t \geq 0}$ and the solution of the Loewner differential equation is a deterministic function of the driver, we see that \eqref{samelaw} holds.

Fix $u>0$. Applying \eqref{claim} with $z=iu$ and \eqref{samelaw} with $z=iu$, $c=1/u$, we deduce
\begin{equation*}
\Law(\wh f_1(iu))=\Law(u h_{\frac1{u^{2}}}(i)-u\sqrt\kappa \wt B_{\frac1{u^{2}}}).
\end{equation*} 
where we have also used the fact that $B_1=-\wt B_1$. 
Since, by \eqref{SLElim}, we have $\gamma(1) = \lim_{u \searrow 0} \wh f_1(iu)$, it follows that $u(h_{1/u^2}(i)- \sqrt{\kappa}\wt B_{1/u^2})$ converges in law to $\gamma(1)$ as $u \searrow 0$. This implies the statement of the lemma.
\end{proof}

Recall the definition of the reverse SLE flow $h$ in \eqref{hdef} and the reversed Brownian Motion $\wt B$ in \eqref{RBM}. \Cref{L:main} implies the following result.
\begin{corollary}\label{Cor:main} Let $(\wh X_t,\wh Z_t)_{t\ge0}$ be the stochastic process that satisfies the following equation 
\begin{align}
	d\wh X_t &= \bigl(-\frac12 \wh X_t -\frac{2\wh X_t}{\wh X_t^2+e^{2\wh Z_t}}\bigr)\, dt + \sqrt{\kappa}\, d\wh B_t, \label{eq11}\\
	d\wh Z_t &= \bigl(-\frac12  +\frac{2}{\wh X_t^2+e^{2\wh Z_t}}\bigr)\, dt,\label{eq22}
	\end{align}	
with the initial data $\wh X_0=\Re (h_1(i))-\sqrt\kappa \wt B_1$,
$\wh Z_0=\log(\Im (h_1(i)))$; here  $\wh B_t := -\int_0^t e^{-s/2}\, d\wt B_{e^s}$ and the filtration $\wh{\mathcal{F}}_t :=\sigma(\wt B_r, r \in [0,e^t])$. Then
\begin{equation}\label{corres}
(\wh X_t,\wh Z_t)\to (\Re(\gamma_1),\log(\Im(\gamma_1)))\quad \text{in law as $t\to\infty$}. 
\end{equation}
\end{corollary}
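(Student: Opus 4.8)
The plan is to realise $(\wh X_t,\wh Z_t)$ as a deterministic time change and rescaling of the reverse-flow-driven process $h_u(i)-\sqrt\kappa\wt B_u$, and then to read off the convergence directly from \cref{L:main}. Throughout, $u$ denotes Loewner time (as in \eqref{hdef}) and $t$ the time of the limit diffusion, with the two linked by $u=e^t$.

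First I would set $W_u := h_u(i)-\sqrt{\kappa}\wt B_u$ and write $W_u = U_u + iV_u$, with $U_u = \Re(h_u(i))-\sqrt{\kappa}\wt B_u$ and $V_u = \Im(h_u(i))$. Since $h_0(i)=i$ and, by \eqref{hdef}, $\partial_u V_u = \tfrac{2V_u}{U_u^2+V_u^2}\ge 0$, the process $V_u$ is non-decreasing with $V_u\ge1$; in particular $U_u^2+V_u^2\ge1$, so $h_u(i)$ is defined for all $u\ge0$ and there are no explosion problems. Splitting $\partial_u h_u(i) = -2/W_u = -2\overline{W_u}/|W_u|^2$ into real and imaginary parts gives
\begin{align*}
dU_u &= \frac{-2U_u}{U_u^2+V_u^2}\,du - \sqrt{\kappa}\,d\wt B_u, \\
dV_u &= \frac{2V_u}{U_u^2+V_u^2}\,du.
\end{align*}

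Next I would substitute $u = e^t$ and set $\wh X_t := e^{-t/2}U_{e^t}$, $\wh V_t := e^{-t/2}V_{e^t}$, and $\wh Z_t := \log\wh V_t$ (well defined since $V_{e^t}\ge1$). Under this time change $s\mapsto\wt B_{e^s}$ is a continuous $\wh{\mathcal F}$-martingale with quadratic variation $e^s$, so $\wh B_t := -\int_0^t e^{-s/2}\,d\wt B_{e^s}$ has quadratic variation $\int_0^t e^{-s}\cdot e^s\,ds = t$ and hence, by Lévy's characterisation, is a standard Brownian motion for $\wh{\mathcal F}_t = \sigma(\wt B_r:r\le e^t)$; moreover $d\wt B_{e^t} = -e^{t/2}\,d\wh B_t$. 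Applying the product rule to $e^{-t/2}U_{e^t}$ and the chain rule to $\log(e^{-t/2}V_{e^t})$ (the latter has finite variation, so there is no Itô correction), and using $U_{e^t}^2+V_{e^t}^2 = e^t(\wh X_t^2+\wh V_t^2) = e^t(\wh X_t^2+e^{2\wh Z_t})$, a short computation turns the displayed system into exactly \eqref{eq11}--\eqref{eq22}; at $t=0$ (i.e. $u=1$) the initial data are $\wh X_0 = U_1 = \Re(h_1(i))-\sqrt{\kappa}\wt B_1$ and $\wh Z_0 = \log V_1 = \log(\Im(h_1(i)))$, as stated.

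Finally, by construction $(\wh X_t,\wh V_t) = \tfrac{1}{\sqrt{e^t}}W_{e^t}$, which by \cref{L:main} converges in law to $\gamma(1)$, i.e. to $(\Re\gamma_1,\Im\gamma_1)$, as $t\to\infty$. Since $\Im\gamma_1>0$ almost surely, the map $(x,v)\mapsto(x,\log v)$ is continuous at the limit, so the continuous mapping theorem gives $(\wh X_t,\wh Z_t)\to(\Re\gamma_1,\log(\Im\gamma_1))$ in law, which is \eqref{corres}. The computations are routine; the only points needing care are the bookkeeping for the time-changed Brownian motion $\wh B$ (that it is $\wh{\mathcal F}$-adapted with quadratic variation $t$, so that Lévy's characterisation applies) and the a.s.\ positivity of $\Im\gamma_1$, which is what lets us pass through the logarithm in the last step. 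All of the substance is already contained in \cref{L:main}.
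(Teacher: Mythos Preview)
Your argument is essentially identical to the paper's: the same reverse-flow process, the same time change $u=e^t$ with rescaling $e^{-t/2}$, the same identification of $\wh B$ as a Brownian motion, and the same appeal to \cref{L:main} followed by the continuous mapping theorem through $\log$. The one point you assert but do not justify is $\P(\Im\gamma_1=0)=0$; the paper supplies this via the fact that the trace a.s.\ spends zero capacity time on $\R$, Fubini, and scale invariance, so you should either cite that argument or include it.
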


Note that the initial value of the process $(\wh X,\wh Z)$ is random but measurable with respect to $\wh{\mathcal{F}}_0$.
\begin{proof}
Put $X_t+iY_t \defeq h_t(i)-\sqrt{\kappa}\wt B_t$. Then, it follows from \eqref{hdef} that
\begin{align}
	dX_t &= \frac{-2X_t}{X_t^2+Y_t^2}\, dt - \sqrt{\kappa}\, d\wt B_t, \nn\\
	dY_t &= \frac{2Y_t}{X_t^2+Y_t^2}\, dt,\label{yfirst}
\end{align}
$X_0=0$, $Y_0=1$.
For $t \ge 0$, let 
$\wh X_t:=  e^{-t/2} X_{e^t}$ and $\wh Y_t := e^{-t/2} Y_{e^t}$.
We apply It\^o's formula to derive
\begin{align}
	d\wh X_t &= \bigl(-\frac12 \wh X_t -\frac{2\wh X_t}{\wh X_t^2+\wh Y_t^2}\bigr)\, dt + \sqrt{\kappa}\, d\wh B_t, \label{eq1}\\
	d\wh Y_t &= \bigl(-\frac12 \wh Y_t +\frac{2\wh Y_t}{\wh X_t^2+\wh Y_t^2}\bigr)\, dt.\label{eq2}
\end{align}
Clearly, $\wh B$ is a standard Brownian motion with respect to the  filtration $\wh{\mathcal{F}}_t$. By definition, we also have 
$\wh X_0=X_1$, $\wh Y_0=Y _1$. The change of variables $\wh Z_t:=\log\wh Y_t$ and another application of It\^o's formula implies that the process  $(\wh X,\wh Z)_{t\ge0}$ satisfies SDE \eqref{eq11}--\eqref{eq22} with the initial conditions 
$\wh X_0=X_1=\Re (h_1(i))-\sqrt\kappa \wt B_1$,
$\wh Z_0=\log (Y _1)=\log(\Im (h_1(i)))$. Note that by \eqref{yfirst},
$Y_1\ge Y_0=1$, therefore $|\wh Z_0|<\infty$. 

Furthermore, 
\begin{equation*}
	e^{-t/2} (h_{e^t}(i)- \sqrt{\kappa}\wt B_{e^t}) = e^{-t/2} (X_{e^t}+iY_{e^t}) = 
	\wh X_t +i \wh Y_t.
\end{equation*}
Thus, by \Cref{L:main}, we have
\begin{equation}\label{convhat}
(\wh X_t,\wh Y_t)\to(\Re(\gamma(1)), \Im(\gamma(1)))\,\,\text{in law as $t\to\infty$}.
\end{equation}
Note that 
\begin{equation}\label{zeroprop}
\P(\Im(\gamma(1))=0)=0.    
\end{equation}
Indeed, the trace of a Loewner chain a.s. spends zero capacity time at the boundary, i.e., $\lambda(\{t \mid \Im\gamma(t)=0 \}) = 0$ a.s., where $\lambda$ is the Lebesgue measure (cf. \cite[Proposition~1.7]{yuan2022topology}; the case for \slek{} appeared already in \cite[Corollary~5.3]{zhan2019decomposition}). Therefore, by Fubini's theorem, $\P(\Im(\gamma(t))=0)=0$ Lebesgue a.e.. By scale invariance, this implies \eqref{zeroprop}.

Now, combining \eqref{convhat} and \eqref{zeroprop}, we get \eqref{corres}.
\end{proof}

It follows from \Cref{Cor:main} that to prove \Cref{T:main} one needs to study invariant measures of \eqref{eq11}--\eqref{eq22}.  PDE \eqref{mainPDE} is then the Fokker-Planck-Kolmogorov equation for this process. However, since the coefficients have a singularity at $0$, a bit of care is needed to make the statements rigorous.

First, we show that this SDE is well-posed and is a Markov process. We will need the following notation. 
For a vector field  $U\colon\R^2\to\R^2$ denote its derivative matrix by $(DU)_{i,j}:=\partial_{x_j} U_i$. The Lie bracket between two vector fields $U,V\colon\R^2\to\R^2$ is given by 
$$
[U,V](x):=DV (x)U(x)-DU(x) V (x),\quad x\in\R^2.
$$
It is immediate to see that if $U= \left(\begin{smallmatrix}1\\0\end{smallmatrix}\right)$, then 
\begin{equation}\label{lie}
		[U, V]=\begin{pmatrix}
			\d_{x_1}V_{1}\\ \d_{x_1}V_{2} 
		\end{pmatrix},\quad
		[U,[U,V]\,]=\begin{pmatrix}
			\d^2_{x_1x_1}V_{1}\\\d^2_{x_1x_1}V_{2}
		\end{pmatrix}.\quad
	\end{equation}

We begin with the following technical statement.

Let $W$ be a standard Brownian motion.
For $\eps>0$, let $g_\eps\colon\R\to[\eps/2,+\infty)$ be a $\CC^\infty(\R)$ function with bounded derivatives of all orders such that
\begin{equation*}
	\begin{cases}
		g_\eps(x)=x,\quad &x\ge\eps;\\
		\eps/2\le g_\eps(x)\le \eps,\quad &-\infty<x<\eps.
	\end{cases}
\end{equation*}  

\begin{lemma}\label{L:31aux}
Fix $\eps>0$ and consider stochastic differential equation
\begin{align}
d X^\eps_t &= \bigl(-\frac12 X^\eps_t -\frac{2 X^\eps_t}{ (X^\eps_t)^2+ g_\eps(e^{2 Z_t^\eps})}\bigr)\, dt + \sqrt{\kappa}\, d W_t, \label{eq1eps}\\
d Z^\eps_t &= \bigl(-\frac12   +\frac{2}{ (X^\eps_t)^2+ g_\eps(e^{2 Z_t^\eps})}\bigr)\, dt,\label{eq2eps}
\end{align}
where $(X^\eps_0,Z^\eps_0)=(x_0,z_0)\in\R^2$.  Then %the following holds:
%\begin{enumerate}[$($i$)$]
%\item 
for any initial condition $(x_0,z_0)\in\R^2$ SDE \eqref{eq1eps}--\eqref{eq2eps} has a unique strong solution. This solution is a strong Feller Markov process. 
%\item The transition kernel $(P^{\eps}_t)_{t\ge0}$ associated with 
%SDE \eqref{eq1eps}--\eqref{eq2eps} has a smooth in all variables density $p^\eps$  with respect to the Lebesgue measure.  
%Furthermore, for any $t>0$ we have $p_t^\eps\in  \CC_b^\infty(\R^2,\R)$. %and 
%\begin{equation}
%	\lim_{N\to\infty}\sup_{\substack{|x|\ge N\\y\in\R}} %|\d^{k}_x\d^{m}_y p^\eps(x,y)|\to0.
%	\label{limboundeps}
%\end{equation}
%		\item  The semigroup $(P^{\eps}_t)_{t\ge0}$ is strong Feller. 
%	\end{enumerate}
\end{lemma}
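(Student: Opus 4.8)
The plan is to regard \eqref{eq1eps}--\eqref{eq2eps} as a non-singular SDE on $\R^2$ and to argue in two steps: first well-posedness, then the strong Feller property via hypoellipticity. For well-posedness, note that since $g_\eps\ge\eps/2$ the denominator $x^2+g_\eps(e^{2z})$ is bounded below by $\eps/2>0$, so the coefficients are $\CC^\infty$ on all of $\R^2$. Writing the drift as $b(x,z)=\bigl(-\tfrac12 x-2G(x,z),\,-\tfrac12+2F(x,z)\bigr)$ with $F(x,z)=\bigl(x^2+g_\eps(e^{2z})\bigr)^{-1}$ and $G=xF$, a direct computation shows that all first-order partial derivatives of $F$ and $G$ are bounded on $\R^2$; this uses $g_\eps(e^{2z})\ge\eps/2$, the elementary bound $\frac{|x|\,c}{(x^2+c)^2}\le\frac{C}{\sqrt c}$ for $c>0$, and the estimate $|\partial_z g_\eps(e^{2z})|=|2e^{2z}g_\eps'(e^{2z})|\le C_\eps\bigl(1+g_\eps(e^{2z})\bigr)$ (which holds because $g_\eps'\equiv1$ on $[\eps,\infty)$ and $g_\eps'$ is globally bounded). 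Hence $F$ and $G$ are globally Lipschitz, $b$ is globally Lipschitz (a linear map plus a bounded Lipschitz map), and the diffusion vector field $(\sqrt\kappa,0)$ is constant; the standard theory of SDEs with Lipschitz coefficients then yields, for every deterministic $(x_0,z_0)$, a unique non-exploding strong solution, and pathwise uniqueness together with autonomy of the coefficients makes it a time-homogeneous strong Markov process.

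For the strong Feller property, the noise acts only in the $X$-direction, so I would verify the parabolic H\"ormander condition. Put $V_0=b$ and $V_1=(\sqrt\kappa,0)$; since $V_1$ is a constant multiple of $(1,0)$, formula \eqref{lie} applies, and the second components of $[V_1,V_0]$ and $[V_1,[V_1,V_0]]$ are, up to nonzero constants, $\partial_x F=\frac{-2x}{(x^2+g_\eps(e^{2z}))^2}$ and $\partial^2_{xx}F=\frac{6x^2-2g_\eps(e^{2z})}{(x^2+g_\eps(e^{2z}))^3}$. The first is nonzero whenever $x\ne0$, and at $x=0$ the second equals $-2\,g_\eps(e^{2z})^{-2}\ne0$; hence $\{V_1,[V_1,V_0],[V_1,[V_1,V_0]]\}$ spans $\R^2$ at every point, so the associated Kolmogorov operator is hypoelliptic. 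By the standard regularity theory for hypoelliptic diffusions (H\"ormander's theorem, via Malliavin calculus, in the spirit of the tools of \cite{MSH}), the transition kernel has a density $p_t(x,y)$ which is jointly smooth; since $\int p_t(x,y)\,dy=1$, Scheff\'e's lemma gives that $x\mapsto p_t(x,\cdot)$ is $L^1$-continuous, whence $P_t$ maps bounded measurable functions to bounded continuous ones, i.e.\ the solution is strong Feller.

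The main obstacle is the strong Feller property: because the diffusion is genuinely degenerate (the generator is hypoelliptic, not elliptic), this cannot be obtained from classical parabolic regularity, and one must go through the bracket computation above and an appeal to hypoellipticity. A secondary, purely technical, point is checking that the modified coefficients are $\CC^\infty$ with bounded derivatives of all positive orders --- in particular across the interface $\{e^{2z}=\eps\}$ --- which is where the smoothness built into $g_\eps$ is used.
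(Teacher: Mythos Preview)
Your proof is correct and follows essentially the same approach as the paper: global Lipschitz coefficients (thanks to the cutoff $g_\eps\ge\eps/2$) for well-posedness, and the parabolic H\"ormander condition---with $[\sigma,b^\eps]$ spanning when $x\neq0$ and $[\sigma,[\sigma,b^\eps]]$ picking up the $x=0$ case---for the strong Feller property. The paper cites \cite{Hair11} directly for the latter, whereas you spell out the extra Scheff\'e step from smooth density to strong Feller; both are fine.
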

\begin{proof}
%\textbf{(i)}. 
Since the drift and diffusion of \eqref{eq1eps}--\eqref{eq2eps} are uniformly Lipschitz continuous functions, it is immediate that SDE \eqref{eq1eps}--\eqref{eq2eps} has a unique strong solution and this solution is a Markov process. To show that $(X_t^\eps,Z_t^\eps)$ is a strong Feller process we use (parabolic) H\"ormander's theorem.
 
Denote
\begin{equation}\label{bepss}
b^\eps(x,z):=\begin{pmatrix}
b^{1,\eps}(x,z)\\b^{2,\eps}(x,z) 
\end{pmatrix}:=\begin{pmatrix}
			-\frac12 x -\frac{2x}{x^2+g_\eps(e^{2z})}\\[1.5ex]
			-\frac12  +\frac{2}{x^2+g_\eps(e^{2z})} 
		\end{pmatrix},\quad x,z\in\R;\qquad \sigma:=\begin{pmatrix}
			\sqrt \kappa\\0 
		\end{pmatrix}.
\end{equation}
Then we can rewrite \eqref{eq1eps}--\eqref{eq2eps} as
\begin{equation}\label{Zeq}
d\xi_t^\eps=b^\eps(\xi^\eps_t)dt+\sigma d W_t,
\end{equation}
	where we put $\xi^\eps:=\begin{psmallmatrix}
		X^\eps\\Z^\eps. 
	\end{psmallmatrix}$
Let us verify that SDE \eqref{Zeq} satisfies all conditions of H\"ormander's theorem \cite[Theorem~1.3]{Hair11} (see also \cite[Theorem~6.1]{Pavl}).
	
We see that the drift $b^\eps$ is in $\CC^{\infty}$ and all its derivatives are bounded. Furthermore, using \eqref{lie}, we see that for $x\neq0$, $z\in\R$ we have $\spn(	\sigma,\bigl[\sigma,b^{\eps}(x,z)\bigr])=\R^2$, and for $x=0$, $z\in\R$ we have 
	$\spn(	\sigma,\Bigl[\sigma,\bigl[\sigma,b^\eps(x,z)\bigr]\Bigr])=\R^2$. Thus, the parabolic H\"ormander condition holds. Hence, all the conditions of the H\"ormander theorem are met and %it follows from   \cite[Theorem~6.1]{Pavl}, that the kernel corresponding to the process $\xi^\eps$ has a smooth transition density  with respect to the Lebesgue measure. %
\cite[Theorem~1.3]{Hair11} implies that $(X^\eps,Z^\eps)$  is strong Feller. 
\end{proof}

Now we can show well-posedness of \eqref{eq11}--\eqref{eq22}.  
\begin{lemma}\label{L:31}
For any random vector $(\wh x_0,\wh z_0)$ independent of $\wh B$ the stochastic differential equation \eqref{eq11}--\eqref{eq22} has a unique strong solution with $(\wh X_0,\wh Z_0)=(\wh x_0,\wh z_0)$. This solution is a  Markov process in the state space $\R^2$ and its transition kernel $P_t$ is strong Feller for any $t>0$. %Furthermore, the transition kernel $P$ associated with \eqref{eq11}-\eqref{eq22} has a density $p\colon(0,\infty)\times\R^2\times\R^2\to\R_+$ with respect to the Lebesgue measure which is smooth in all variables.% and the measure $\pi:=\Law\bigl(\Re(\gamma_1),\log(\Im(\gamma_1))\bigr)$ is a unique invariant measure for this process.
%
%Finally, for any $t>0$, $k,m\in\Z_+$, $N>0$ we have
%\begin{equation}
%\sup_{\substack{x\in\R \\z\ge -N}} |\d^{k}_x\d^{m}_z p_t (x,y)|<\infty.
%\label{limboundeps}
%\end{equation}
\end{lemma}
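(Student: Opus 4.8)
The plan is to first read off a unique local solution from the smoothness of the coefficients, then rule out explosion by exploiting the sign structure of the drift, and finally transfer the strong Feller property from the regularized equation \eqref{eq1eps}--\eqref{eq2eps} of \cref{L:31aux} by a localization argument. For local well-posedness, I would observe that $(x,z)\mapsto x^2+e^{2z}$ is strictly positive on all of $\R^2$, so the drift of \eqref{eq11}--\eqref{eq22} is $\CC^\infty$ on $\R^2$, in particular locally Lipschitz, while the diffusion coefficient is constant. Hence for any $(\wh x_0,\wh z_0)$ independent of $\wh B$ classical SDE theory yields a unique maximal strong solution $(\wh X,\wh Z)$ on a stochastic interval $[0,\tau)$, with pathwise uniqueness and $\tau$ the explosion time; everything then reduces to proving $\tau=\infty$ a.s.

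For non-explosion I would establish pathwise a priori bounds on $[0,T\wedge\tau)$ for arbitrary fixed $T$. Since the extra drift $\tfrac{2}{\wh X_t^2+e^{2\wh Z_t}}$ in \eqref{eq22} is nonnegative, integrating gives the linear lower barrier $\wh Z_t\ge\wh z_0-t/2$; and since this drift is $\le 2e^{-2\wh Z_t}\le\tfrac12$ whenever $\wh Z_t\ge\log 2$, while no noise enters \eqref{eq22} (so $\wh Z$ is $C^1$ in $t$), a barrier comparison gives $\wh Z_t\le\max(\wh z_0,\log 2)$. Thus $\wh Z$ stays in a random but a.s.\ bounded interval, and in particular $e^{2\wh Z_t}\ge e^{2\wh z_0-T}$ never degenerates. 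Using $\wh X_t^2+e^{2\wh Z_t}\ge2|\wh X_t|e^{\wh Z_t}$, the drift in \eqref{eq11} is bounded in modulus by $\tfrac12|\wh X_t|+e^{-\wh Z_t}\le\tfrac12|\wh X_t|+e^{T/2-\wh z_0}$, i.e.\ at most affine in $|\wh X_t|$; a routine localization at exit times of balls, together with Gronwall's inequality and the Burkholder--Davis--Gundy bound for the $\sqrt\kappa\,d\wh B$-term, gives $\ex\sup_{t\in[0,T\wedge\tau)}|\wh X_t|<\infty$. Hence the solution a.s.\ stays in a bounded region on $[0,T]$, which forces $\tau>T$; as $T$ was arbitrary, $\tau=\infty$. (Alternatively one could compare with the solution of \eqref{eq1eps}--\eqref{eq2eps}, which is global since its coefficients are globally Lipschitz, and use the lower barrier to see that for small $\eps$ the two solutions agree on $[0,T]$.)

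The Markov property then follows in the standard way from time-homogeneity of \eqref{eq11}--\eqref{eq22} and pathwise uniqueness; write $P_tf(x,z)=\ex[f(\wh X_t,\wh Z_t)]$ with $(\wh X_0,\wh Z_0)=(x,z)$ for the transition semigroup. To prove that $P_t$ is strong Feller for $t>0$, fix a bounded measurable $f$ and a compact set $K\subset\R^2$, put $z_-:=\min\{z:(x,z)\in K\}$ and choose $\eps\in(0,e^{2z_--t})$. For a deterministic start $(x_0,z_0)\in K$, the lower barrier gives $\wh Z_s\ge z_0-s/2>\tfrac12\log\eps$ for all $s\le t$, so on $[0,t]$ one has $g_\eps(e^{2\wh Z_s})=e^{2\wh Z_s}$ and therefore $(\wh X,\wh Z)$ solves \eqref{eq1eps}--\eqref{eq2eps} as well; by the uniqueness in \cref{L:31aux} it coincides on $[0,t]$ with the (global) solution of \eqref{eq1eps}--\eqref{eq2eps} started at $(x_0,z_0)$ and driven by the same Brownian motion. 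Consequently $P_tf=P_t^\eps f$ on $K$, where $P_t^\eps$ is the semigroup of \cref{L:31aux}; since $P_t^\eps f$ is continuous, so is $P_tf$ on the interior of $K$, and as $K$ was arbitrary, $P_tf\in\CC_b(\R^2)$.

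The only genuinely delicate step is the non-explosion argument: one must recognize that the apparent singularity of the coefficients does not lie in $\R^2$ (the denominator is positive throughout), and that the two-sided confinement of $\wh Z$ — a linear lower barrier from the manifest positivity of the extra drift, a constant upper barrier from its sign for large $\wh Z$ — keeps $e^{2\wh Z}$ away from $0$ on finite intervals, so that the at-most-affine growth of the $\wh X$-drift rules out explosion. The local well-posedness and the strong Feller transfer are then routine given \cref{L:31aux}.
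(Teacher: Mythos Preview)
Your proof is correct and follows essentially the same approach as the paper: the key observation in both is the pathwise lower barrier $\wh Z_t\ge\wh z_0-t/2$, which lets one identify the solution of \eqref{eq11}--\eqref{eq22} on $[0,T]$ with that of the regularized equation \eqref{eq1eps}--\eqref{eq2eps} for $\eps=e^{2\wh z_0-T}$, whence global well-posedness and the strong Feller property follow directly from \cref{L:31aux}. The paper uses this identification immediately for existence and uniqueness (your ``alternative''), whereas your primary non-explosion argument via the two-sided barrier on $\wh Z$ and an affine-growth bound on the $\wh X$-drift is correct but more than is needed---the upper barrier and the Gronwall/BDG step can be dispensed with once you invoke the regularized equation.
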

\begin{proof}

First, we consider the case when the initial data 
$(\wh x_0,\wh z_0)$ is deterministic. Then it is immediate to see that for any $T>0$ a solution to \eqref{eq11}--\eqref{eq22}  satisfies 
\begin{equation}\label{ineqY}
\wh Z_t\ge \wh z_0 -T/2,
\end{equation}
$t\in[0,T]$. Hence, on time interval $[0,T]$, any solution to \eqref{eq11}--\eqref{eq22} solves SDE \eqref{eq1eps}--\eqref{eq2eps} with $(X_0^\eps,Z_0^\eps)=(\wh x_0,\wh z_0)$, $\eps=\exp(2 \wh z_0-T)$, $W=\wh B$ and vice versa.
Since, by \Cref{L:31aux}, the latter equation has a unique strong solution, we see that SDE \eqref{eq11}-\eqref{eq22}
has a unique strong solution on  $[0,T]$ and
\begin{equation}\label{ref}
(\wh X_t,\wh Z_t)=( X^\eps_t, Z^\eps_t),\quad t\in[0,T].
\end{equation}
Since $T$ is arbitrary, it follows that 
SDE \eqref{eq11}-\eqref{eq22} has a unique strong solution on $[0,\infty)$. 

Strong existence for the case of arbitrary initial data follows now from \cite[Theorem~1]{olav}, and strong uniqueness from \cite[Remark~IV.1.4]{Ikedawatanabe}. Moreover, \cite[Theorem~5.4.20]{KS} shows that $(\wh X_t,\wh Z_t)_{t\ge0}$ is a Markov process with the state space $\R^2$ equipped with the Borel topology.

Now let us show that $(P_t)_{t\ge0}$ is strong Feller. Let $f$ be an arbitrary bounded measurable function $\R^2\to\R$, let $(x_0,z_0)\in \R^2$. Let $(x_0^n,z_0^n)\in\R^2$, $n\in\Z_+$ be a sequence converging to $(x_0,z_0)$ as $n\to\infty$. Without loss of generality we can assume that $z_0^n\ge -2 |z_0|$ for all $n\in\Z_+$. Fix $t>0$. Then, denoting by $(P^{\eps}_t)_{t\ge0}$ the transition kernel associated with SDE \eqref{eq1eps}--\eqref{eq2eps}, we derive
\begin{equation}\label{SF1}
P_t f(x^n_0,z^n_0)=\E_{(x^n_0,z^n_0)}f (\wh X_t,\wh Z_t)=\E_{(x^n_0,z^n_0)}f ( X^\eps_t, Z^\eps_t)=P_t^\eps f(x^n_0,z^n_0),	
\end{equation}
where $\eps:=\exp(-4|z_0|-t)$ and we used here \eqref{ineqY} and \eqref{ref}.   By \Cref{L:31aux}, we have
\begin{equation}\label{SF2}
P_t^\eps f(x^n_0,z^n_0)\to P_t^\eps f(x_0,z_0)=P_t f(x_0,z_0),\quad \text{as $n\to\infty$}, 
\end{equation}
here we used once again  \eqref{ineqY} and \eqref{ref}. Combining \eqref{SF1} and \eqref{SF2}, we see that $P_t$ is strong Feller.
%
%Now let us show that the transition kernel  $(P_t)_{t\ge0}$ has a  density
%\begin{equation*}
%	(0,\infty)\times \R^2\times\R^2\ni(t,(x_0,z_0),(x,z))\mapsto p_t((x_0,z_0),(x,z))
%\end{equation*}
%with respect to the Lebesgue measure and that this density is smooth in all variables. Fix arbitrary $N>0$, $T>0$. By \eqref{ineqY} and \eqref{ref}, for any $x_0\in\R$,  $t\in[0,T]$, $z_0\ge -N$
%\begin{equation}\label{epsnoeps}
%P_t((x_0,z_0),\cdot)=P_t^\eps((x_0,z_0),\cdot),
%\end{equation}
%where $\eps:=\exp(-2N-T)$ and $P_t^\eps$ is defined in \Cref{L:31aux}.
%Recall that by \Cref{L:31aux}, $P^\eps$ has a smooth density and $p_t^\eps\in\CC_b^\infty$; therefore
%$P$ has a density $p$, the function 
%\begin{equation*}
%	(0,T]\times \R\times(-N,\infty)\times\R^2\ni(t,(x_0,z_0),(x,z))\mapsto p_t((x_0,z_0),(x,z))
%\end{equation*}
%is smooth and \eqref{limboundeps} holds. Since $T$ and $N$ were arbitrary, we see that $p$ is smooth everywhere. By the standard arguments, this implies that $P_t$ is strong Feller for any $t>0$. 
\end{proof}

To show uniqueness of the invariant measure of $(P_t)$, we will need the following support theorem. For $\delta>0$, $v\in\R^2$ let $B_{\delta,v}$ be the ball of radius $\delta$ centred at $v$.

\begin{lemma}\label{L:support}
For any $(x_0,z_0)\in\R^2$, $\delta>0$, there exists $T>0$ such that
\begin{equation*}
P_T((x_0,z_0), B_{\delta,(0,\log 2)})>0.
\end{equation*}
\end{lemma}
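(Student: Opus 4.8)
The goal is a support/controllability statement for the hypoelliptic diffusion \eqref{eq11}--\eqref{eq22}: starting from any $(x_0,z_0)$ we can reach any neighbourhood of $(0,\log 2)$ with positive probability in some finite time $T$. The natural tool is the Stroock--Varadhan support theorem, which identifies the topological support of the law of the solution (in path space, hence in particular at the terminal time) with the closure of the set of solutions of the associated control ODE obtained by replacing the Brownian increment $\sqrt\kappa\,d\wh B_t$ by $\sqrt\kappa\,\dot u(t)\,dt$ for a smooth control $u$. Concretely, I would consider the control system
\begin{align*}
\dot x(t) &= -\tfrac12 x(t) - \frac{2x(t)}{x(t)^2+e^{2z(t)}} + \sqrt\kappa\,\dot u(t),\\
\dot z(t) &= -\tfrac12 + \frac{2}{x(t)^2+e^{2z(t)}},
\end{align*}
with $(x(0),z(0))=(x_0,z_0)$, and it suffices to exhibit \emph{one} smooth control $u$ and time $T$ driving the system into $B_{\delta,(0,\log 2)}$; then the support theorem gives $P_T((x_0,z_0),B_{\delta,(0,\log2)})>0$. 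Since the coefficients are smooth on all of $\R^2$ (the denominator $x^2+e^{2z}$ never vanishes), there are no regularity obstructions to invoking Stroock--Varadhan here.

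To construct the control I would proceed in two stages. First steer $x$ to $0$: pick $u$ on a short interval $[0,t_1]$ so that $x$ decreases monotonically to $0$ while $z$ stays in a controlled range (this is possible because $\dot x$ contains the additive term $\sqrt\kappa\,\dot u$, so $x$ is fully controllable; meanwhile over a short time $z$ moves by at most a bounded amount). Once $x=0$, keep $u$ chosen so that $x(t)\equiv 0$ for $t\ge t_1$ — on the line $x=0$ the drift in the $x$-equation vanishes, so the required control is $\dot u \equiv 0$, i.e. we simply stop feeding in control. Then the $z$-equation reduces to the one-dimensional autonomous ODE $\dot z = -\tfrac12 + 2e^{-2z}$, whose unique equilibrium is exactly $e^{2z}=4$, i.e. $z=\log 2$, and this equilibrium is globally attracting on $\R$ (for $z<\log 2$ the right-hand side is positive, for $z>\log2$ it is negative). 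Hence $z(t)\to\log 2$ as $t\to\infty$, so for $T$ large enough $(x(T),z(T))=(0,z(T))\in B_{\delta,(0,\log2)}$. This produces the desired control and finishes the argument.

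The only point requiring a little care is the first stage: I must check that while driving $x$ down to $0$ the variable $z$ does not blow up or the system leave a region where everything stays smooth — but since $z$ satisfies $|\dot z|\le \tfrac12 + 2/(x^2+e^{2z})$ and the problematic growth of $2/(x^2+e^{2z})$ only happens when $z$ is very negative, a short time window $t_1$ (or, if one prefers, first a preliminary stage bringing $z$ up above some fixed level using the same ``coast with $x$ held at a convenient value'' idea) keeps everything bounded. I expect the main (and really only) obstacle to be purely bookkeeping: writing down an explicit admissible $u$ and verifying the elementary ODE estimates; conceptually the argument is just ``controllability of $x$ plus the fact that $z=\log2$ is the globally attracting equilibrium of the reduced flow on $\{x=0\}$''. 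Alternatively, one can avoid exhibiting $u$ by hand and instead combine the strong Feller and (forward) irreducibility properties — but the direct control-theoretic construction above is the cleanest route given what is already available.
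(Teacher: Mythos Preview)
Your proposal is correct and follows essentially the same route as the paper: construct a control that first drives $x$ to $0$ on a finite interval, then hold $x\equiv 0$ and let the reduced ODE $\dot z=-\tfrac12+2e^{-2z}$ relax to its globally attracting equilibrium $z=\log 2$; finally invoke a support argument to conclude positivity of $P_T$. The only substantive difference is packaging. You cite the Stroock--Varadhan support theorem as a black box, whereas the paper carries out the support step by hand: it uses that Brownian motion stays $\varepsilon$-close to any given $C^1$ path with positive probability, and then applies Gr\"onwall to show that on this event the SDE solution stays close to the controlled ODE solution. The paper's direct approach is slightly more robust here because the drift is not globally Lipschitz (it blows up as $z\to-\infty$), so the classical Stroock--Varadhan theorem does not apply verbatim; the paper sidesteps this by using the a priori bound $\wh Z_t\ge z_0-T/2$ (and the same bound for the controlled $z$), which confines both processes to a half-plane on which the drift \emph{is} Lipschitz, making Gr\"onwall legitimate. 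Your remark that ``the only point requiring a little care is the first stage'' is morally right, but the clean fix is exactly this uniform lower bound on $z$, not a short-time argument.
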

\begin{proof}
Fix $(x_0,z_0)\in\R^2$. Consider the following deterministic  control problem associated with \eqref{eq11}--\eqref{eq22}:
\begin{align}
\frac{d}{dt}x_t &= \bigl(-\frac12 x_t -\frac{2x_t}{x_t^2+e^{2z_t}}\bigr) + \sqrt{\kappa} \frac{d}{dt} U_t,\label{U1}\\
\frac{d}{dt}z_t &= \bigl(-\frac12  +\frac{2}{x_t^2+e^{2z_t}}\bigr),\label{U2}
\end{align}	
where $x(0)=x_0$, $z(0)=z_0$ and $U\in\CC^1([0,T];\R)$ is a non-random function with $U_0=0$. We claim that we can find $T>0$ and $U$ such that $x_T=0$ and $|z_T-\log 2|<\delta/2$.

First, we take a $\CC^1$ path $x\colon[0,1]\to\R$ such that $x(0)=x_0$, $x(1)=0$, $\frac{d}{dt}x(t)\Bigr|_{t=1}=0$. Let $z_t$, $t\in[0,1]$, be a solution to \eqref{U2} with  the initial condition $z_0$ (for $x$ constructed above). %Finally, let $U_t$, $t\in[0,1]$ be a solution to \eqref{U1} with  $U_0=z_0$ (for $x$ and $z$ constructed above).

Consider now the equation  
\begin{equation*}
\frac{d}{dt}z_t = \bigl(-\frac12  +\frac{2}{e^{2z_t}}\bigr), \quad t\ge1    
\end{equation*}
with the initial condition $z_1$ constructed above. It is easy to see that there exists $T=T(x_0,z_0)>1$ such that $|z_T-\log 2|<\delta/2$. Set $x_t=0$ for $t\in[1,T]$.

Finally, let $U_t$, $t\in[0,T]$, be a $\CC^1$ path such that \eqref{U1} holds for $x,z$ constructed above and $U_0=0$. The desired control $U$ has been constructed.

Now for arbitrary $\eps>0$, consider the event
\begin{equation*}
A_\eps:=\{\sup_{t\in[0,T]}|W_t-U_t|<\eps\}.	
\end{equation*}
It is well-known (see, e.g., \cite[Theorem~38]{Freedman}) that $\P(A_\eps)>0$.
Let $(\wh X_t,\wh Z_t)_{t\in[0,T]}$ be the solution of \eqref{eq11}-\eqref{eq22} with the initial condition $(x_0,z_0)$. Then
\begin{equation}\label{glb}
z_t\ge z_0-T/2,\quad \wh Z_t\ge z_0-T/2,\qquad \text{for all $t\in[0,T]$}.
\end{equation} 
Therefore, for any $t\in[0,T]$ we have on $A_\eps$
\begin{equation}\label{almostall}
|\wh X_t-x_t|+|\wh Z_t-z_t|\le  C\int_0^t (|\wh X_s-x_s|+|\wh Z_s-z_s|)\,ds +\sqrt \kappa\eps,	
\end{equation}
where we used \eqref{glb} and the fact that the Lipschitz constant of the drift of SDE 
\eqref{eq11}-\eqref{eq22} is bounded on the set $\R\times [z_0-T/2,+\infty)$. By the Gronwall inequality and \eqref{almostall}, we have on $A_\eps$
\begin{equation*}
|\wh X_T-x_T|+|\wh Z_T-z_T|\le  C(T)\sqrt \kappa \eps.
\end{equation*}
Choose now $\eps$ small enough, such that the right-hand side of the above inequality is less than $\delta/2$. Then recalling that $x_T=0$ and 
$|z_T-\log 2|<\delta/2$, we finally deduce 
\begin{equation*}
P_T((x_0,z_0), B_{\delta,(0,\log 2)})\ge \P(A_\eps)>0.\qedhere
\end{equation*}
\end{proof}

\begin{lemma}\label{L:UIM}
The measure $\pi:=\Law\bigl(\Re(\gamma_1),\log(\Im(\gamma_1))\bigr)$ is the unique invariant measure for the process \eqref{eq11}--\eqref{eq22}.
\end{lemma}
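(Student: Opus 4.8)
The plan is to prove the two assertions contained in the statement in turn: that $\pi$ is \emph{an} invariant measure for \eqref{eq11}--\eqref{eq22}, and that it is the \emph{only} one. The first will come from the convergence in \cref{Cor:main} together with the (strong) Feller property, and the second from combining the strong Feller property of \cref{L:31} with the accessibility of the point $(0,\log 2)$ established in \cref{L:support}.

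For invariance, I would first observe that the process $(\wh X_t,\wh Z_t)_{t\ge0}$ of \cref{Cor:main} is a Markov process with the transition semigroup $(P_t)$ of \cref{L:31} (started from the law of its initial datum): indeed, its initial datum is $\wh{\mathcal F}_0$-measurable, whereas $\wh B$ depends only on the increments of $\wt B$ after time $1$, so the two are independent and \cref{L:31} applies. Hence, writing $\mu_0:=\Law(\wh X_0,\wh Z_0)$, \cref{Cor:main} says that $\mu_0 P_t\to\pi$ weakly on $\R^2$ as $t\to\infty$; note that $\pi$ is genuinely a probability measure on $\R^2$, since $\Im\gamma_1\in(0,2]$ almost surely by \eqref{zeroprop} and the capacity bound $\Im\gamma(1)\le2$, so $\log\Im\gamma_1$ is a.s.\ finite. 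Now fix $s>0$ and a bounded continuous $g\colon\R^2\to\R$. Since $(P_t)$ is strong Feller, $P_sg$ is again bounded and continuous, so
\begin{equation*}
\int g\,d(\pi P_s)=\int P_sg\,d\pi=\lim_{t\to\infty}\int P_sg\,d(\mu_0 P_t)=\lim_{t\to\infty}\int g\,d(\mu_0 P_{t+s})=\int g\,d\pi,
\end{equation*}
and since $g$ was arbitrary, $\pi P_s=\pi$: $\pi$ is invariant.

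For uniqueness I would run the standard support argument. Let $\mu$ be any invariant probability measure; its topological support $S:=\supp\mu$ is nonempty, closed, and forward invariant, i.e.\ $P_t(x,S)=1$ for all $x\in S$ and $t>0$. Indeed, $x\mapsto P_t(x,S^c)=P_t\ind_{S^c}(x)$ is continuous by the strong Feller property, and it vanishes $\mu$-a.e.\ (because $\mu=\mu P_t$ and $\mu(S^c)=0$), hence it vanishes on all of $S$. Pick $x_*\in S$; by \cref{L:support}, for every $\delta>0$ there is $T>0$ with $P_T(x_*,B_{\delta,(0,\log 2)})>0$, and since $P_T(x_*,\cdot)$ is carried by $S$ this forces $B_{\delta,(0,\log 2)}\cap S\neq\emptyset$. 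Letting $\delta\downarrow0$ gives $(0,\log 2)\in S=\supp\mu$, so $(0,\log 2)$ lies in the support of \emph{every} invariant probability measure. Because, for a strong Feller semigroup, two distinct ergodic invariant probability measures have disjoint topological supports (a standard fact; cf.\ the ergodic-theory tools of \cite{MSH}), there is at most one ergodic invariant probability measure, and hence, by the ergodic decomposition theorem on the Polish space $\R^2$, at most one invariant probability measure; combined with the previous paragraph, this measure is $\pi$. The only steps requiring real care here are the identification of $(\wh X,\wh Z)$ with the $(P_t)$-dynamics (which rests on the independence of the initial datum from $\wh B$) and the choice of a uniqueness criterion matching exactly what \cref{L:support} delivers — accessibility of the single point $(0,\log 2)$ at an \emph{unspecified} time, rather than topological irreducibility at a fixed time — which is precisely what the support argument above is designed to accommodate; all the genuinely delicate analysis, namely taming the singularity of the coefficients of \eqref{eq11}--\eqref{eq22} at the origin, has already been carried out in \cref{L:31aux,L:31,L:support}.
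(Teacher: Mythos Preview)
Your proof is correct and follows essentially the same route as the paper: invariance via weak convergence of $\mu_0 P_t$ together with the Feller property, and uniqueness by combining the strong Feller property with the accessibility result of \cref{L:support} to force $(0,\log 2)$ into the support of every invariant measure. The only cosmetic difference is that you deduce $(0,\log 2)\in\supp\mu$ via forward invariance of the support (using that $P_t\ind_{S^c}$ is continuous and vanishes $\mu$-a.e.), whereas the paper instead uses continuity of $P_T(\cdot,B_{\delta,(0,\log 2)})$ to integrate over a small ball in $\supp\nu$; both are standard executions of the same idea.
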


\begin{proof} 
The fact that the measure $\pi$ is invariant follows by a standard argument. Denote, as usual,
for a measurable bounded function $f\colon\R^2\to\R$ and a measure $\nu$ on $\R^2$
\begin{align*}
P_tf(x):=\int_{\R^2} f(y) P_t(x,dy),\,\,x\in\R^2;\qquad
P_t\nu(A):=\int_{\R^2} P_t(y,A)\,\nu(dy),\,\,A\in\mathcal{B}(\R^2).
\end{align*}
Consider the measure $\mu:=\Law\Bigl(\Re (h_1(i))-\sqrt\kappa \wt B_1,
\log(\Im (h_1(i)))\Bigr)$. Rewriting \eqref{corres}, we see that
\begin{equation}\label{convergence2}
P_t \mu\to \pi\,\,\text{weakly as $t\to\infty$}.
\end{equation}
Fix any $s\ge0$. Let us show that $P_s \pi=\pi$. Indeed, let $f\colon\R^2\to\R$ be an arbitrary continuous bounded function. Then
\begin{align*}
	\int_{\R^2} f(x) \,P_s\pi(dx)&=
	\int_{\R^2} P_sf(x) \,\pi(dx)
	=\lim_{t\to\infty}\int_{\R^2} P_sf(x) \,P_t\mu(dx)\\
	&=\lim_{t\to\infty}\int_{\R^2} f(x) \,P_{t+s}\mu(dx)=\int_{\R^2} f(x) \,\pi(dx),
\end{align*}
where the second identity follows from \eqref{convergence2} and  the fact that $P_sf$ is a bounded continuous function (this is guaranteed by the Feller property of $P$). Since $f$ was arbitrary bounded continuous function, we see that $P_s \pi=\pi$ for any $s\ge0$. Thus, the measure $\pi$ is invariant for  SDE \eqref{eq11}-\eqref{eq22}.
	
Now let us show that  SDE \eqref{eq11}-\eqref{eq22} have a unique invariant measure. Assume the contrary. Then SDE \eqref{eq11}-\eqref{eq22} must have two different ergodic invariant measures $\nu$, $\wt \nu$  (\cite[Lemma~7.1]{HaiPDE}, \cite[Theorem~5.1.3(iv)]{VO16}). By \Cref{L:31} the semigroup $(P_t)$ is strong Feller. Therefore, by \cite[Proposition 7.8]{DPG}
\begin{equation}\label{support}
\supp(\nu)\cap\supp(\wt \nu)=\emptyset.	
\end{equation} 
We claim now that the point $(0,\log2)$ belongs to the support of both of these measures.

Indeed, fix arbitrary $\delta>0$. Take any $(x_0,z_0)\in\supp(\nu)$. Then, by \Cref{L:support}, there exists $T>0$, $\eps>0$ such that $P_T((x_0,z_0), B_{\delta,(0,\log 2)})>\eps$. By the strong Feller property of $P_T$, the function $(x,z)\mapsto P_T((x,z), B_{\delta,(0,\log 2)})$ is continuous. Therefore, there exists $\delta'>0$ such that
\begin{equation*}
P_T((x,z), B_{\delta,(0,\log 2)})>\eps/2,\quad \text{for any $(x,z)\in
	B_{\delta',(x_0,z_0)}$}. 
\end{equation*}
This implies that 
\begin{equation*}
\nu(B_{\delta,(0,\log 2)})\ge\int_{B_{\delta',(x_0,z_0)}} \nu(x,z)	P_T((x,z), B_{\delta,(0,\log 2)})\,dxdz\ge \frac\eps2\nu (B_{\delta',(x_0,z_0)})>0 
\end{equation*}
where the last inequality follows from the fact that $(x_0,z_0)\in\supp(\nu)$.  Since $\delta$ was arbitrary, we see that  
$(0,\log 2)\in\supp(\nu)$. Similarly, $(0,\log 2)\in\supp(\wt\nu)$, which contradicts \eqref{support}. Therefore,  SDE \eqref{eq11}-\eqref{eq22} has a unique invariant measure.  
\end{proof}	
	
Let $L$ be the generator of the semigroup $P$
\begin{equation*}
L f := \frac12\kappa \d^2_{xx}f +\bigl(-\frac12 x - \frac{2x}{x^2+e^{2z}}\bigr)\d_xf+\bigl(-\frac12  + \frac{2}{x^2+e^{2z}}\bigr)\d_zf,
\end{equation*}
where $f\in\CC^\infty(\R^2)$. As usual, the adjoint of $L$ will be denoted by $L^*$. 

\begin{lemma}\label{L:36}
The measure $\pi:=\Law\bigl(\Re(\gamma_1),\log(\Im(\gamma_1))\bigr)$ has a smooth density $p$ with respect to the Lebesgue measure. Further, $p$ is the unique solution  in the class of densities of the Fokker-Planck-Kolmogorov equation
\begin{equation}\label{KFP}
L^*p=0.
\end{equation}
%	This concludes the proof of the theorem.invariant measure for this process.
Finally, $p(x,z)=0$ for $x\in\R$, $z\ge\log 2$, and $p(x,z)>0$
 for $x\in\R$, $z<\log 2$.   
\end{lemma}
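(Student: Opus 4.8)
The plan is to read off all three assertions from two things already available: \cref{L:UIM}, which says $\pi$ is the \emph{unique} invariant measure of \eqref{eq11}--\eqref{eq22}, and the fact that the generator $L$ satisfies H\"ormander's bracket condition at every point of $\R^2$. For the latter one simply removes the cutoff in \eqref{bepss} (legitimate since $x^2+e^{2z}>0$ everywhere, so $b\in\CC^\infty(\R^2,\R^2)$ with bounded derivatives) and reruns the computation from the proof of \cref{L:31aux}: by \eqref{lie}, $\spn\bigl(\sigma,[\sigma,b](x,z)\bigr)=\R^2$ for $x\neq0$ and $\spn\bigl(\sigma,[\sigma,[\sigma,b](x,z)]\bigr)=\R^2$ for $x=0$. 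Hence $L$, and therefore its formal adjoint $L^*$ (same second-order part $\tfrac\kappa2\d_{xx}^2$, drift $-b$, plus a smooth zeroth-order term), is hypoelliptic. Invariance of $\pi$ gives $\int Lf\,d\pi=0$ for all $f\in\CC_c^\infty(\R^2)$ (from $\int P_tf\,d\pi=\int f\,d\pi$ and $P_tf-f=\int_0^tP_sLf\,ds$), i.e.\ $\pi$ solves $L^*\pi=0$ in the distributional sense. By hypoellipticity of $L^*$ (equivalently, the regularity theory for FPK equations, \cite{BKRS}), $\pi$ has a density $p\in\CC^\infty(\R^2,\R)$ solving \eqref{KFP} classically, and $p\ge0$ because $\pi$ is a positive measure.

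For uniqueness in the class of densities I would invoke the generalized Ambrosio--Figalli--Trevisan superposition principle of \cite{BRS}. Let $\rho$ be any probability density with $L^*\rho=0$. Then the constant family $\mu_t:=\rho\,dx\,dz$ solves the Fokker--Planck equation $\d_t\mu_t=L^*\mu_t$, and since $b$ is locally bounded and the diffusion matrix is the constant $\mathrm{diag}(\kappa,0)$, the \emph{local} integrability condition $\int_0^T\!\int_K(|b|+\kappa)\,d\mu_t\,dt<\infty$ holds for every compact $K$ and every $T$. (Using the generalized principle is essential: $b$ is unbounded as $z\to-\infty$, so the global bound $\int_{\R^2}|b|\,\rho\,dx\,dz<\infty$ is not available a priori.) Because the $\mu_t$ are probability measures, \cite{BRS} yields a probability measure $\mathbf P$ on $C([0,\infty);\R^2)$ under which the canonical process is a non-exploding solution of the martingale problem for $L$ with $\mathbf P\circ X_t^{-1}=\rho\,dx\,dz$ for all $t$. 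By \cref{L:31}, \eqref{eq11}--\eqref{eq22} admits strong solutions and pathwise uniqueness, hence its martingale problem is well posed (Yamada--Watanabe); so $\mathbf P$ is the law of the diffusion \eqref{eq11}--\eqref{eq22} started from the law $\rho\,dx\,dz$, and since $\mathbf P\circ X_t^{-1}=\rho\,dx\,dz$ for every $t$ this means exactly that $\rho\,dx\,dz$ is an invariant measure of $(P_t)$. By \cref{L:UIM} it equals $\pi$, so $\rho=p$ Lebesgue-a.e.

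The support statement is then short. Since $\Im\gamma(1)\le\sqrt{2\hcap(\gamma[0,1])}=2$ almost surely, we have $\pi\bigl(\R\times(\log2,\infty)\bigr)=0$, hence $p=0$ a.e.\ on $\R\times(\log2,\infty)$ and therefore, by continuity, $p\equiv0$ on $\R\times[\log2,\infty)$. On the connected open set $D:=\R\times(-\infty,\log2)$ the function $p\ge0$ solves $L^*p=0$ and $\int_Dp=1$, so $p\not\equiv0$ on $D$. Applying the strong minimum principle for hypoelliptic operators (Bony's maximum principle, after absorbing the smooth zeroth-order coefficient of $L^*$ into a locally constant shift $-M$; alternatively, a Harnack inequality for non-negative solutions, or positivity of the transition density via the Stroock--Varadhan support theorem and a controllability argument extending \cref{L:support}), the set $\{p=0\}$ is open in $D$; being also closed in $D$ by continuity, it is empty, so $p>0$ on $D$.

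The main obstacle is the uniqueness step: one must check with care that the hypotheses of the generalized superposition principle of \cite{BRS} genuinely hold for the singular, unbounded drift of \eqref{eq11}--\eqref{eq22} --- the whole point being that only \emph{local} integrability against $\rho$ is at our disposal --- and then correctly identify the resulting martingale-problem solution with the well-posed SDE so as to feed it back into \cref{L:UIM}. A lesser technical point is running the strong minimum principle for $L^*$ in the presence of its zeroth-order term.
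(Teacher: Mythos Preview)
Your smoothness argument via hypoellipticity of $L^*$ matches the paper, and your derivation of $p\equiv 0$ on $\{z\ge\log 2\}$ from the hcap bound $\Im\gamma(1)\le 2$ is correct (the paper uses a slightly different dynamical argument). The uniqueness step, however, has a real gap.

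The superposition principle of \cite{BRS} does \emph{not} run on local integrability. Theorem~1.1 there requires, for the constant family $\mu_t=\rho\,dx\,dz$, the \emph{global} weighted bound
\[
\int_0^T\!\!\int_{\R^2}\frac{|b^1(x,z)\,x|+|b^2(x,z)\,z|+\kappa}{1+x^2+z^2}\,\rho(x,z)\,dx\,dz\,dt<\infty,
\]
and this is not automatic for an arbitrary probability density $\rho$ solving $L^*\rho=0$: along $x=0$, $z\to-\infty$ the drift $b^2$ blows up like $2e^{-2z}$, so $|b^2 z|/(1+z^2)$ is unbounded on the state space. (Incidentally, $b$ does not have bounded derivatives either, e.g.\ $\partial_z b^2(0,z)=-4e^{-2z}$; this does not affect the local H\"ormander computation, but your parenthetical is inaccurate.) The paper supplies the missing ingredient via a Lyapunov function: with $V(x,z)=x^2+\log(1+z^2)$ one has $LV\le C-\bigl(x^2+\tfrac{4|z|\ind(z\le0)}{(x^2+e^{2z})(1+z^2)}\bigr)$, and \cite[Theorem~2.3.2]{BKRS} then forces, for \emph{every} density solution $\rho$, the a~priori estimate $\int_{\R^2}\bigl(x^2+\tfrac{|z|\ind(z\le0)}{(x^2+e^{2z})(1+z^2)}\bigr)\rho<\infty$, which is exactly what one needs to verify the hypothesis of \cite{BRS} and then feed back into \cref{L:UIM}. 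The paper explicitly warns (citing \cite[Exercise~9.8.48]{BKRS}) that uniqueness of the invariant measure does \emph{not} in general imply uniqueness of probability solutions to the stationary FPK equation; that is precisely the shortcut you are attempting without the Lyapunov estimate.

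For positivity on $\{z<\log 2\}$ your sketch is plausible but not routine: Bony's strong maximum principle propagates zeros along drift trajectories and diffusion directions, so one still needs a controllability argument filling the whole strip, and the zeroth-order term $-\operatorname{div} b$ in $L^*$ changes sign, so ``absorbing it into a shift'' is not a well-defined manoeuvre. The paper takes a different, more structural route: a degenerate-elliptic maximum principle spreads any zero of $p$ along the elliptic direction (horizontal lines $\{z=z_0\}$), and then the observation that $L^*p=0$ is genuinely parabolic in $z$ on $\{x^2+e^{2z}<4\}$ and backward-parabolic on its complement allows the parabolic Harnack inequality (which tolerates zeroth-order terms of either sign) to propagate the zero in $z$.
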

\begin{proof}
Since the measure $\pi$ is invariant for $P$, we have (in the weak sense)
\begin{equation}\label{stars}
	L^*\pi=0.
\end{equation}
Let us now check that $L^*$ satisfies the (standard)  H\"ormander condition.

Denote by $b$ the drift of \eqref{eq11}-\eqref{eq22}
\begin{equation}\label{driftdefb}
	b(x,z):=\begin{pmatrix}
		b^{1}(x,z)\\b^{2}(x,z) 
	\end{pmatrix}:=\begin{pmatrix}
		-\frac12 x -\frac{2x}{x^2+e^{2z}}\\[1.5ex]
		-\frac12  +\frac{2}{x^2+e^{2z}} 
	\end{pmatrix},\quad x,z\in\R.
\end{equation}
and recall the notation for $\sigma$ \eqref{bepss}. Using \eqref{lie}, we see that  for $x\neq0$, $z\in\R$ we have $\spn(	\sigma,\bigl[\sigma, b(x,z)\bigr])=\R^2$, and for $x=0$, $z\in\R$ we have 
	$\spn(	\sigma,\Bigl[\sigma,\bigl[\sigma, b(x,z)\bigr]\bigr])=\R^2$. Thus, the  H\"ormander condition holds and by   
H\"ormander's theorem  \cite[Theorem~7.4.3]{StPDE}, $L^*$ is hypoelliptic.\footnote{In the proof of \cref{L:31aux}, we use \cite[Theorem~1.3]{Hair11} which is a probabilistic version of H\"ormander's theorem, and it imposes global assumptions on boundedness of derivatives of the drift. Here we use  \cite[Theorem~7.4.3]{StPDE} which is a purely PDE result and it does not require any global assumptions. Therefore we do not have to smoothen the drift $b$ here.} Therefore, \eqref{stars} implies that the Schwarz distribution $\pi\in\CC^\infty(\R^2)$. Thus, the measure $\pi$ has a $\CC^\infty$ density $p$ with respect to the Lebesgue measure and \eqref{KFP} holds.

Now let us show that \eqref{KFP} does not have any other solutions. We have already seen that semigroup $(P_t)$  has a unique invariant measure (this has been established in \cref{L:UIM}).  
In general, without extra conditions, this does not immediately imply uniqueness of solutions to \eqref{KFP} in the class of probability measures, see \cite[hint to exercise~9.8.48]{BKRS}. This is because not every probability solution to the Fokker--Planck--Kolmogorov equation corresponds to a solution of the martingale problem; we refer to \cite[p.~719]{BRS} for further discussion.

Thus, we assume the contrary and suppose that $p'$ is another probability density which solves  \eqref{KFP}. Let $\pi'$ be the measure with density $p'$. We claim that $\pi'$ is another invariant measure for $(P_t)$. 

Consider a Lyapunov function $V$ (the suggestion to take this specific function is due to Stas Shaposhnikov)
\begin{equation*}
V(x,z):=x^2 + \log (1+z^2),\quad (x,z)\in\R^2.
\end{equation*}
Then 
\begin{align*}
LV(x,z)&=\kappa-x^2 -\frac{4x^2}{x^2+e^{2z}}-\frac{z}{1+z^2}+ \frac{4z}{(x^2+e^{2z})(1+z^2)}\\
&\le \kappa +3-\Bigl(x^2 + \frac{4|z|\ind(z\le0)}{(x^2+e^{2z})(1+z^2)}\Bigr).
\end{align*}
By \cite[Theorem~2.3.2 and inequality (2.3.2)]{BKRS}, this implies (note that $V$ is obviously quasi-compact in the sense of 
\cite[Definition~2.3.1]{BKRS})
\begin{equation}\label{almostdone}
\int_{\R^2}\bigl(x^2 + \frac{4|z|\ind(z\le0)}{(x^2+e^{2z})(1+z^2)}\bigr)p'(x,z)\,dxdz<\infty.
\end{equation}

Then recalling \eqref{driftdefb} we have
\begin{equation*}
\frac{1+|b^1(x,z)x|+|b^2(x,z)z|}{1+x^2+z^2}\le 5+ \frac{2|z|\ind(z\le0)}{(x^2+e^{2z})(1+z^2)}.
\end{equation*}
Combining this with \eqref{almostdone}, we see that  for any $T>0$
\begin{equation*}
\int_0^T\int_{\R^2}	\frac{1+|b^1(x,z)x|+|b^2(x,z)z|}{1+x^2+z^2}p'(x,z)\,dxdzdt<\infty.
\end{equation*}
Therefore, by the generalized Ambrosio-Figalli-Trevisan superposition principle \cite[Theorem~1.1]{BRS} and the standard equivalence between weak solutions of SDE and the martingale problems, see, e.g., \cite[Proposition~5.4.11]{KS}, there exists a weak solution to SDE \eqref{eq11}--\eqref{eq22} on the interval $[0,T]$ such that for any $t\ge0$ we have $\Law(\wh X_t, \wh Z_t)=\pi'$. Thus, the measure $\pi'$ is also invariant for the semigroup $(P_t)$. However, this contradicts \Cref{L:UIM}. Therefore, \eqref{KFP} has a unique solution in the class of probability densities.

Finally, let us prove the results concerning the support of $p$. 
 Note that if $\wh Z_0(\omega)>\log 2$, then $\wh Z_0(\omega)>\wh Z_1(\omega)$. Let $f\colon\R\to[0,\infty)$ be an increasing function such that $f(x)=0$ for $x\le \log2$ and $f(x)>0$ for $x>\log 2$. Then $f(\wh Z_0)-f(\wh Z_1)\ge0$. On the other hand, by invariance
\begin{equation*}
\E_\pi (f(\wh Z_0)-f(\wh Z_1))=0.
\end{equation*}  
This implies that $\P_\pi$ a.s. we have $f(\wh Z_0)=f(\wh Z_1)$. By the definition of $f$ this implies that $\P_\pi(\wh Z_0>\log2)=0$ and thus $\pi(\R\times (\log2,\infty))=0$. Since the density $p$ is continuous we have 
\begin{equation}\label{topsup}
p(x,z)=0,\quad x\in\R, z\ge\log2.	
\end{equation}

\begin{figure}[h]
\centering
\includegraphics[width=0.8\textwidth]{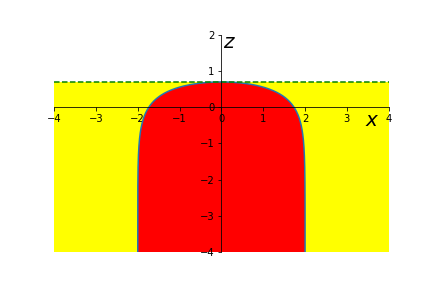}
\caption{Support of the density $p$ (yellow and red regions). The process $\wh Z_t$ is increasing when $(\wh X_t,\wh Z_t)$ is in the red region, and decreasing whenever $(\wh X_t,\wh Z_t)$ is in the yellow region. The dashed line, which touches the red region, is $z=\log 2$.}
\label{fig:supp}
\end{figure}

Now let us show that $p(x,z)>0$ for any $z<\log2$. 
The idea of this part of the proof is due to Stas Shaposhnikov. Suppose the contrary that for some $x_0\in\R$, $z_0<\log 2$ we have $p(x_0,z_0)=0$. We claim that this implies that $p\equiv0$. Note that  the set $\{z=z_0\}$ is the set of elliptic connectivity for operator $L^*$ in the sense of \cite[Chapter III.1]{OR73} (see also \cite[Section 2]{Hill}). Therefore, the maximum principle for degenerate
elliptic equations \cite[Theorem 3.1.2]{OR73} (see also \cite[Theorem~1]{Hill}, \cite[Theorem~4]{Aleks}) implies that $p(x,z_0)=0$ for any $x\in\R$.

Note that in the domain 
$$
D:=\{x^2+\exp(2z)<4\}
$$ 
PDE \eqref{KFP} becomes a parabolic equation in $(z,x)$ and on its complement \eqref{KFP} is a backward parabolic equation. This corresponds to the fact that the process $\wh Z_t$ is increasing on $D$ and decreasing on $\R^2\setminus D$, see \cref{fig:supp}.

Fix now small $\delta$ such that $\delta^2 + \exp(2 z_0)<4$ (this is possible since $z_0<\log2$). Consider now the domain $D':=[-\delta,\delta]\times (-\infty,z_0)\subset D$.  In this domain \eqref{KFP} is a parabolic equation
\begin{equation}\label{parpde}
\d_z p -a(x,z)\d_{xx} p+b(x,z)\d_x p +c (x,z) p =0,
\end{equation} 
for certain smooth functions $a,b,c$ and
$$
a(x,z)=\frac{\kappa}{\frac{4}{x^2+e^{2z}}-1}>0, \quad (x,z)\in D',
$$  
since $\frac{4}{x^2+e^{2z}}>\frac{4}{\delta^2+e^{2z_0}}>1$ on $D'$.
Therefore, by the Harnack inequality for parabolic equations (see, e.g., \cite[Section~7.1, Theorem~10]{Evans}), we get for arbitrary $z_1\le z_0$, and $C>0$
\begin{equation*}
\sup_{x\in(-\delta,\delta)} p(x, z_1) \le  C \inf_{x\in(-\delta,\delta)} p(x, z_0)=0.	
\end{equation*}
Using again the maximum principle for degenerate
elliptic equations, we deduce from this that $p(x,z_1)=0$ for any $x\in\R$. Since $z_1\le z_0$ was arbitrary we have that $p\equiv 0$ on $\R\times(-\infty;z_0]$. 

We use a similar argument to treat the case $z\ge z_0$. Consider now the domain $D'':=[3,4]\times (z_0,\infty)\subset \R^2\setminus D$.  In this domain \eqref{KFP} is a \textit{backward} parabolic equation \eqref{parpde} and 
$$
a(x,z)=\frac{\kappa}{\frac{4}{x^2+e^{2z}}-1}<0, \quad (x,z)\in D''
$$ 
since $\frac{4}{x^2+e^{2z}}<\frac49<1$ on $D''$.
The Harnack inequality for parabolic equations implies now for arbitrary $z_1\ge z_0$, and $C>0$
\begin{equation*}
	\sup_{x\in(3,4)} p(x, z_1) \le  C \inf_{x\in(3,4)} p(x, z_0)=0.
\end{equation*}
and thus, as above, the maximum principle implies that $p\equiv 0$ on $\R\times[z_0, \infty)$. 

Therefore the function $p$ is identically $0$ which is not possible since $p$ is a density. This contradiction shows that $p(x,z)>0$ for any $x\in\R$, $z<\log 2$. Together with \eqref{topsup} this concludes the proof of the theorem.
\end{proof}

\begin{proof}[Proof of \Cref{T:main}]
By \Cref{L:36}, the measure $\Law\bigl(\Re(\gamma_1),\log(\Im(\gamma_1))\bigr)$ has a smooth density $p$ with respect to the Lebesgue measure, which solves \eqref{KFP}. Therefore, the measure $\Law\bigl(\Re(\gamma_1),\Im(\gamma_1)\bigr)$
has a density 
$$
\psi(x,y):=\frac1y p(x,\log y),\quad x\in\R, y>0.
$$
Now, by change of variables, it is easy to see that $\psi$ is the unique solution of \eqref{mainPDE} in the class of probability densities. Since $p(x,z)$ is positive whenever $z<\log 2$, we see that $\psi(x,y)$ is positive whenever $y\in(0,2)$. Finally, it is immediate that the function $\bar\psi(x,y):=\psi(-x,y)$ also solves \eqref{mainPDE}. By uniqueness, this implies that $\psi(x,y)=\psi(-x,y)$.
\end{proof}

\subsection{Proof of \cref{T:other}}
\label{se:density_analysis}

To establish \cref{T:other}, it will be convenient to work in the coordinates $(A,U)$, where 
\begin{equation*}
A:=\arg\gamma_1 = \cot^{-1}(\Re\gamma_1/\Im\gamma_1), \quad U:=(\Im\gamma_1)^2.
\end{equation*}

Denoting the density of $(A,U)$ by $\phi$, we note that
\[
\psi(x,y)=\frac{2 y^2}{x^2+y^2}\phi(\cot^{-1}(x/y),y^2),\quad x\in\R,\ y>0.
\]
It follows from \Cref{T:main} that the density $\phi$ is the unique solution to the corresponding Fokker-Planck-Kolmogorov equation, which in the new coordinates is given by 
\begin{align}\label{FPK}
&\frac{\kappa}{2u}\sin^4\alpha\, \d^2_{\alpha\alpha}\phi +\frac{3\kappa-4}{u}\sin^3\alpha\cos\alpha\, \d_\alpha\phi\nn +(u-4\sin^2\alpha)\, \d_u\phi \\&\qquad+\frac{\kappa-4}{u}(3\sin^2\alpha\cos^2\alpha-\sin^4\alpha)\, \phi + \phi=0,\quad (\alpha,u)\in(0,\pi)\times(0,4].
\end{align}
Recall that we can consider this equation on a larger domain $(0,\pi)\times(0,\infty)$, but since $\psi(x,y)=0$ for $y\ge2$, we have $\phi(\alpha,u)=0$ for $u\ge4$.

Note that this PDE can be rewritten as
\begin{equation}\label{eq:fp_ualpha}
\partial_u((u-4\sin^2\alpha)\, \phi)+\frac{\kappa-4}{u}\partial_\alpha(\sin^3\alpha\cos\alpha \,\phi)+\frac{\kappa}{2u}\partial_\alpha(\sin^4\alpha\,\partial_\alpha\phi)=0.
\end{equation}
The crucial statement on the way to prove \cref{T:other} is the following lemma.
\begin{lemma}\label{L:db1}
For any $\alpha\in(0,\pi)$ we have
\begin{equation}\label{mainidentity}
\int_0^4 \frac{1}{u}\phi(\alpha,u)\,du = \frac{\Gamma(1+\frac4\kappa)}{4\sqrt\pi\Gamma(\frac12+\frac4\kappa)}(\sin\alpha)^{8/\kappa-2}.	
\end{equation}
\end{lemma}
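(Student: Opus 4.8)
The plan is to integrate the Fokker--Planck equation \eqref{eq:fp_ualpha} in $u$ to obtain a closed ODE for $F(\alpha):=\int_0^4 u^{-1}\phi(\alpha,u)\,du$, solve it explicitly, and then pin down the leftover multiplicative constant by integrating the same equation in $\alpha$ and using the normalisation $\int_0^\pi\int_0^4\phi=1$.

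\smallskip\noindent\emph{Boundary estimates, the main obstacle.} First I would establish the behaviour of $\phi$ that makes the integrations by parts legitimate. At $u=4$ this is free: by the support statement in \cref{T:main}, $\phi(\alpha,\cdot)$ and all its $u$-derivatives vanish there. The delicate endpoint is $u\to0$, where I need (i) that $\phi(\alpha,u)\to0$ fast enough that $F(\alpha)<\infty$ (equivalently, $(u-4\sin^2\alpha)\phi(\alpha,u)\to0$), and (ii) that $u^{-1}\partial_\alpha^k\phi(\alpha,u)$ for $k=1,2$ is dominated near $u=0$ by an integrable function of $u$, locally uniformly in $\alpha$, so that $\partial_\alpha$ commutes with $\int_0^4 u^{-1}(\cdot)\,du$. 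To obtain these I would use that on a set $J\times(0,u_0)$, with $J$ a compact subinterval of $(0,\pi)$ and $u_0<4\inf_{\alpha\in J}\sin^2\alpha$ --- i.e. strictly inside the region $D=\{u<4\sin^2\alpha\}$ of \cref{L:36} --- the equation \eqref{eq:fp_ualpha} is a uniformly parabolic equation in which $u$ plays the role of the time variable; combining interior parabolic (Schauder/Harnack) estimates with a suitable supersolution/Lyapunov barrier should yield $\phi(\alpha,u)\le C_J u^\beta$ with $\beta=\beta(\kappa)>0$ on such sets, together with the analogous bounds on $\partial_\alpha\phi$ and $\partial^2_{\alpha\alpha}\phi$. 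This is the step I expect to be the hard one; it is precisely the ``tricky boundary behaviour'' flagged in the introduction. In probabilistic terms it says $\ex[(\Im\gamma_1)^{-2}\ind(\arg\gamma_1\in J)]<\infty$, reflecting that a small $\Im\gamma_1$ forces $\arg\gamma_1$ towards $\{0,\pi\}$.

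\smallskip\noindent\emph{The ODE for $F$ and its solution.} Granting (i)--(ii), integrate \eqref{eq:fp_ualpha} over $u\in(0,4)$: the first term integrates to $[(u-4\sin^2\alpha)\phi]_0^4=0$, and in the remaining two terms $\int_0^4 u^{-1}(\cdot)\,du$ passes through $\partial_\alpha$, giving $(\kappa-4)(\sin^3\alpha\cos\alpha\,F)'+\tfrac{\kappa}{2}(\sin^4\alpha\,F')'=0$ on $(0,\pi)$. One integration yields $(\kappa-4)\sin^3\alpha\cos\alpha\,F+\tfrac{\kappa}{2}\sin^4\alpha\,F'=C_0$, and the symmetry $F(\alpha)=F(\pi-\alpha)$ (inherited from $\psi(x,y)=\psi(-x,y)$ in \cref{T:main}) gives $F'(\tfrac\pi2)=0$, so evaluation at $\alpha=\tfrac\pi2$ forces $C_0=0$. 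Hence $(\log F)'=(\tfrac8\kappa-2)\cot\alpha$, i.e. $F(\alpha)=c\,(\sin\alpha)^{8/\kappa-2}$ for some $c>0$. (To sidestep having to know $F\in C^2$ a priori, one first works with $F_\delta(\alpha):=\int_\delta^4 u^{-1}\phi\,du$, which is smooth, derives the same identity with an extra term $(4\sin^2\alpha-\delta)\phi(\alpha,\delta)$, and lets $\delta\to0$ using (i).)

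\smallskip\noindent\emph{Determining the constant.} Integrate \eqref{eq:fp_ualpha} instead over $\alpha\in(0,\pi)$: the two $\partial_\alpha$-terms contribute boundary values $[\sin^3\alpha\cos\alpha\,\phi]_0^\pi=[\sin^4\alpha\,\partial_\alpha\phi]_0^\pi=0$, so, writing $m_0(u):=\int_0^\pi\phi(\alpha,u)\,d\alpha$ and $m_2(u):=\int_0^\pi\sin^2\alpha\,\phi(\alpha,u)\,d\alpha$, we get $\partial_u(u\,m_0(u)-4m_2(u))=0$. Since $\int_0^4 m_0(u)\,du=1$ and (by Tonelli and $F<\infty$) $\int_0^4 u^{-1}m_2(u)\,du=\int_0^\pi\sin^2\alpha\,F(\alpha)\,d\alpha<\infty$, both $u\,m_0(u)$ and $m_2(u)$ have $\liminf$ equal to $0$ as $u\to0$, which forces the constant $u\,m_0(u)-4m_2(u)$ to be $0$; thus $m_0(u)=\tfrac4u m_2(u)$. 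Integrating against $du$ over $(0,4)$ then gives $1=4\int_0^\pi\sin^2\alpha\,F(\alpha)\,d\alpha=4c\int_0^\pi(\sin\alpha)^{8/\kappa}\,d\alpha$, and since $\int_0^\pi(\sin\alpha)^{8/\kappa}\,d\alpha=\sqrt\pi\,\Gamma(\tfrac12+\tfrac4\kappa)/\Gamma(1+\tfrac4\kappa)$, we conclude $c=\Gamma(1+\tfrac4\kappa)/\bigl(4\sqrt\pi\,\Gamma(\tfrac12+\tfrac4\kappa)\bigr)$, which is \eqref{mainidentity}.
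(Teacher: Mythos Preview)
Your high-level plan coincides with the paper's: integrate \eqref{eq:fp_ualpha} in $u$ to obtain an ODE for $I_1(\alpha)=\int_0^4 u^{-1}\phi\,du$, use the symmetry $I_1(\alpha)=I_1(\pi-\alpha)$ to kill the integration constant, solve to get $I_1=c(\sin\alpha)^{8/\kappa-2}$, and fix $c$ via the identity $\int_0^\pi\sin^2\alpha\,I_1\,d\alpha=\tfrac14$ (your $u\,m_0=4m_2$, integrated in $u$). The genuine gap is exactly where you flag it, and in one place where you do not: the pointwise decay $\phi(\alpha,u)\le C_J u^\beta$ (and its $\partial_\alpha$-analogues) as $u\to0$ is only sketched (``Schauder/Harnack plus a barrier should yield''), and the vanishing of $[\sin^3\alpha\cos\alpha\,\phi]_0^\pi$ and $[\sin^4\alpha\,\partial_\alpha\phi]_0^\pi$ in your $\alpha$-integration is simply asserted, although nothing so far rules out $\phi(\alpha,u)\gtrsim\alpha^{-3}$ near $\alpha=0$. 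The paper never proves such pointwise bounds.

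Instead, the paper replaces both boundary difficulties by \emph{subsequence} arguments. An elementary calculus lemma (\cref{le:nice_subseq}, \cref{Cor:310}) says: if $\int_0^T f$ does not diverge and $h>0$ is non-increasing with $\int_0^T h=\infty$, then along some $t_k\searrow0$ one has $|f(t_k)|\le h(t_k)$ and $f'(t_k)\ge h'(t_k)$. Applied with $f(\delta)=J(\delta):=\int_{u_0}^4 u^{-1}\phi(\delta,u)\,du$ and $h(\delta)=1/\delta$, this makes the $\alpha$-boundary terms in \eqref{prelim} vanish along $\delta_k\searrow0$ with no growth information on $\phi$, yielding your $u\,m_0=4m_2$ rigorously (\cref{L:310}). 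For the $u\to0$ boundary, the paper writes the approximate ODE for $I_1^{\eps}(\alpha)=\int_\eps^4 u^{-1}\phi\,du$ (your $F_\delta$), uses \cref{L:310} to extract $\eps_k\searrow0$ with $\int_\delta^{\pi-\delta}\phi(\alpha,\eps_k)\,d\alpha\to0$, and then passes to the limit by an Arzel\`a--Ascoli compactness argument for the approximating ODEs (\cref{le:ode_approximation}): uniform boundedness of $(I_1^{\eps_k},(I_1^{\eps_k})')$ at a single point $\alpha_k$ suffices, and such points are manufactured by elementary min/mean-value considerations. This machinery is what substitutes for your unproved parabolic estimates.
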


Before we go into the technical details, let us outline heuristically the main idea of the proof. If we assume $\phi(\alpha,0+)=\phi(\alpha,4)=0$, then integrating \eqref{eq:fp_ualpha} in $u$ yields
\begin{equation*}
\partial_\alpha \Bigl(\int_0^4 (\frac{\kappa-4}{u}\sin^3\alpha\cos\alpha\,\phi(\alpha,u) +\frac{\kappa}{2u}\sin^4\alpha\,\d_\alpha \phi(\alpha,u))\,du \Bigr)=0.
\end{equation*}
Hence the expression $J(\alpha):=\int_0^4( \frac{\kappa-4}{u}\sin^3\alpha\cos\alpha\,\phi +\frac{\kappa}{2u}\sin^4\alpha\,\d_\alpha \phi)\,du$ does not depend on $\alpha$. Moreover, let us suppose that $\alpha^4 |\d_\alpha\phi|$ and $\alpha^3\phi$ monotonically go to $0$ as $\alpha\to0$ for any $u\in(0,4]$. Then $J(0+)=0$ and thus $J(\alpha)=0$ for any $\alpha\in(0,\pi)$. Therefore,
\begin{equation*}
0=J(\alpha)\sin^{-8/\kappa-2}\alpha=\int_0^4 \frac{\kappa}{2u}\partial_\alpha((\sin\alpha)^{2-8/\kappa}\,  \phi(\alpha,u))\, du.
\end{equation*}
This yields that $\int_0^4 \frac{1}{u}(\sin\alpha)^{2-8/\kappa}\,  \phi(\alpha,u)\, du$ is constant in $\alpha$, which gives
\begin{equation*}
	\int_0^4 \frac{1}{u}\, \phi(\alpha,u)\, du = c(\sin\alpha)^{8/\kappa-2}
\end{equation*}
for some $c>0$, which is almost the statement of \Cref{L:db1}.

However, since the  boundary behavior of $\phi$ as $\alpha$ approaches $0$ is not  clear, we developed an alternative approach which avoids these steps. Instead of integrating all the way to $0$, we will integrate only up to $\varepsilon > 0$ and obtain approximate identities. Then we would like to let $\varepsilon \searrow 0$. For this, we would need  the following technical results about approximating ODEs.

\begin{lemma}\label{le:ode_approximation}
Let $S,T\in\R$, $S\le T$. Suppose $x_k\colon [S,T] \to \RR^d$, $k \in \NN$, are continuous functions that solve the integral equation
\begin{equation}\label{inteqn}
x_k(t)-x_k(s) = \int_{s}^{t} \big(F(r,x_k(r)) + g_k(r)\big) \,dr +h_k(s,t),\quad s,t\in[S,T],
\end{equation}
where
\renewcommand\labelitemi{$\vcenter{\hbox{\tiny$\bullet$}}$}
\begin{itemize}
    \item $F$ is a continuous function $[S,T]\times \R^d\to\R^d$ and there exists $C>0$ such that $|F(t,x)| \le C(1+|x|)$ for $t \in [S,T]$, $x\in \R^d$;
    \item $g$ and $g_k$, $k\in\Z_+$, are integrable functions $[S,T]\to\R^d$, $g_k\to g$ pointwise as $k\to\infty$, and $\sup_k \norm{g_k}_\infty < \infty$;
    \item $h_k$, $k\in\Z_+$, are functions $[S,T]^2\to\R^d$, and $\norm{h_k}_\infty \to 0$ as $k \to \infty$.
\end{itemize}
Moreover, suppose that there exist $t_k \in [S,T]$ such that $\sup_k |x_k(t_k)| < \infty$.

Then there exists a continuous  function $x\colon [S,T] \to \R^d$ such that along some subsequence $(k_j)_{j\in\Z_+}$ we have $x_{k_j} \to x$ uniformly as $j\to\infty$ and
\begin{equation}\label{inteq}
x(t)-x(s) = \int_{s}^{t} \big(F(r,x(r)) + g(r)\big) \,dr,\quad s,t\in[S,T].
\end{equation}
\end{lemma}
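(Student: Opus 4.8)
The plan is a standard Arzel\`a--Ascoli compactness argument: first show the family $(x_k)$ is uniformly bounded and equicontinuous on $[S,T]$, then extract a uniformly convergent subsequence, and finally pass to the limit in the integral equation \eqref{inteqn}. Throughout I would write $M:=\sup_k|x_k(t_k)|$, $G:=\sup_k\|g_k\|_\infty$, and $H:=\sup_k\|h_k\|_\infty$, all of which are finite by hypothesis.

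To obtain a uniform bound, fix $k$ and $t\in[t_k,T]$; starting \eqref{inteqn} at $t_k$ and using the linear growth $|F(r,y)|\le C(1+|y|)$ gives
\[
|x_k(t)|\le M+(C+G)(T-S)+H+C\int_{t_k}^{t}|x_k(r)|\,dr ,
\]
so Gr\"onwall's inequality yields $|x_k(t)|\le\big(M+(C+G)(T-S)+H\big)e^{C(T-S)}=:L$; the same bound on $[S,t_k]$ follows from the backward Gr\"onwall inequality, and $L$ is independent of $k$, i.e.\ $\sup_k\|x_k\|_\infty\le L$. For equicontinuity, for $S\le s\le t\le T$ the bounds $|F(r,x_k(r))|\le C(1+L)$ and $|g_k|\le G$ give
\[
|x_k(t)-x_k(s)|\le\big(C(1+L)+G\big)(t-s)+\|h_k\|_\infty .
\]
Given $\eps>0$, I would pick $N$ with $\|h_k\|_\infty<\eps/2$ for $k>N$, so that $|x_k(t)-x_k(s)|<\eps$ for all such $k$ once $|t-s|$ is small enough, while the finitely many functions $x_1,\dots,x_N$ are individually uniformly continuous on $[S,T]$; hence the whole family admits a common modulus of continuity. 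Combined with the uniform bound, Arzel\`a--Ascoli produces a subsequence $(k_j)$ and a continuous limit $x\colon[S,T]\to\R^d$ with $x_{k_j}\to x$ uniformly (so automatically $\|x\|_\infty\le L$).

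It then remains to pass to the limit along $(k_j)$ in \eqref{inteqn} with $s,t$ fixed. The left-hand side converges to $x(t)-x(s)$ by uniform convergence, and $h_{k_j}(s,t)\to0$ since $\|h_{k_j}\|_\infty\to0$. Because $F$ is uniformly continuous on the compact set $[S,T]\times\{y\in\R^d:|y|\le L\}$ and $x_{k_j}(r),x(r)$ stay in $\{|y|\le L\}$, uniform convergence of $x_{k_j}$ forces $\sup_{r\in[s,t]}|F(r,x_{k_j}(r))-F(r,x(r))|\to0$, whence $\int_s^t F(r,x_{k_j}(r))\,dr\to\int_s^t F(r,x(r))\,dr$. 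Finally $g_{k_j}\to g$ pointwise with $|g_{k_j}|\le G$, so dominated convergence on the bounded interval $[s,t]$ gives $\int_s^t g_{k_j}(r)\,dr\to\int_s^t g(r)\,dr$. Collecting these limits yields \eqref{inteq}.

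I expect the only real (and still minor) obstacle to be organising the Gr\"onwall step: since the reference point $t_k$ at which $x_k$ is known to be bounded depends on $k$, one must run Gr\"onwall both forward on $[t_k,T]$ and backward on $[S,t_k]$ to obtain a bound uniform in $k$. A secondary subtlety is that $\|h_k\|_\infty$ need not be small for small $|t-s|$ at a fixed $k$, so equicontinuity has to be checked by separating the large-$k$ tail (where $\|h_k\|_\infty$ is uniformly small) from the finitely many initial terms (each continuous on a compact interval). Everything else is routine.
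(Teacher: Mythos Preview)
Your proof is correct and follows essentially the same approach as the paper's: uniform boundedness via Gr\"onwall from the moving base point $t_k$, equicontinuity by splitting off the finitely many $k$ for which $\|h_k\|_\infty$ is not yet small, Arzel\`a--Ascoli, and then passage to the limit. If anything, your justification of the limiting step (uniform continuity of $F$ on a compact set and dominated convergence for the $g_k$ term) is more explicit than the paper's one-line ``follows by taking limits.''
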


\begin{proof}
First, we show that $x_k$ are uniformly bounded. Indeed, by our assumptions we have
for any $t\in[S,T]$
\[ \begin{split}
    \abs{x_k(t)} &\le \abs{x_k(t_k)} + \int_{t_k}^t \abs{F(r,x_k(r)) + g_k(r)} \,dr + \norm{h_k}_\infty \\
    &\le C + C \int_{t_k}^t (1+\abs{x_k(r)})\,dr ,
\end{split} \]
and an application of Grönwall's inequality implies $x_k$ are uniformly bounded.

Consequently, we can assume $F$ to be bounded. It follows that the family $(x_k)_{k\in\Z_+}$ is equicontinuous. Indeed, for $\varepsilon > 0$ let $k_\varepsilon$ large enough such that $\norm{h_k}_\infty < \varepsilon$ for $k \ge k_\varepsilon$. Then, for $k \ge k_\varepsilon$, we have
\[ \begin{split}
    \abs{x_k(t)-x_k(s)} &\le \int_s^t \abs{F(r,x_k(r)) + g_k(r)} \, dr + \varepsilon \\
    &\le C\abs{t-s} + \varepsilon
\end{split} \]
which is smaller than $2\varepsilon$ whenever $\abs{t-s} < \varepsilon/C$. For $k < k_\varepsilon$, by continuity of $x_k$ we can find $\delta_\varepsilon > 0$ such that $\abs{x_k(t)-x_k(s)} < \varepsilon$ whenever $\abs{t-s} < \delta_\varepsilon$.

Hence, by the Arzelà-Ascoli theorem, we have $x_{k_j} \to x$ uniformly along some subsequence. Equation \eqref{inteq} follows now from \eqref{inteqn} by taking limits.
\end{proof}

We will later also frequently use integation by parts arguments. In order to control the boundary terms that appear, the following lemma will be useful.

\begin{lemma}\label{le:nice_subseq}
Let $T>0$, and $f\colon (0,T] \to \RR$ be a differentiable function such that $\int_\varepsilon^T f(s)\,ds$ neither diverges to $+\infty$ nor $-\infty$ as $\varepsilon \searrow 0$. Let $h\colon {(0,T]} \to {(0,\infty)}$ be a non-increasing differentiable function such that $\int_0^T h(s)\,ds = +\infty$. Then there exists a sequence $t_k \searrow 0$ such that
\[
    \abs{f(t_k)} \le h(t_k) \quad\text{and}\quad f'(t_k) \ge h'(t_k),\quad k\in\Z_+.
\]
\end{lemma}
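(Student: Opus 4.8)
The plan is to argue by contradiction, extracting a violation of the hypothesis that $\int_\varepsilon^T f(s)\,ds$ does not diverge. Suppose no sequence as in the statement exists. Then there is $\delta\in(0,T)$ such that for every $t\in(0,\delta)$ one has $|f(t)|>h(t)$ or $f'(t)<h'(t)$. I would partition $(0,\delta)=A\sqcup B\sqcup C$, where $A:=\{t\in(0,\delta):f(t)>h(t)\}$ and $B:=\{t\in(0,\delta):f(t)<-h(t)\}$ are open and $C:=\{t\in(0,\delta):|f(t)|\le h(t)\}$ is relatively closed; the contradiction hypothesis then says $f'<h'$ on all of $C$. Since $h$ is non-increasing, $h'\le 0$, so on $C$ we additionally get $(f+h)'=f'+h'<2h'\le 0$. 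The aim is to show that such a configuration forces $\int_\varepsilon^T f\to+\infty$ or $\int_\varepsilon^T f\to-\infty$ as $\varepsilon\searrow0$.

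First I would dispose of the case in which $C$ does not accumulate at $0$: then $(0,\delta')\subseteq A\cup B$ for some $\delta'>0$, and since between a point of $A$ and a point of $B$ the continuous function $f$ must vanish (producing a point of $C$ in between), the interval $(0,\delta')$ lies entirely in $A$ or entirely in $B$. In the first case $f>h$ on $(0,\delta')$ gives $\int_\varepsilon^{\delta'} f\ge\int_\varepsilon^{\delta'}h\to+\infty$ using $\int_0^T h=\infty$, hence $\int_\varepsilon^T f\to+\infty$, a contradiction; the second case is symmetric.

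The main step is the case in which $C$ does accumulate at $0$, where I would show $A=\emptyset$ and $B$ is at most a single interval abutting $\delta$. Let $(a,b)$ be a connected component of $A$. Since $C$ accumulates at $0$ we cannot have $a=0$; and $a$ cannot lie in the open set $B$; so $a\in C$. By continuity $p(a)=0$ where $p:=f-h$, whereas the contradiction hypothesis gives $p'(a)=f'(a)-h'(a)<0$, so $p<0$ immediately to the right of $a$, contradicting $p>0$ on $(a,b)$. Hence $A=\emptyset$, i.e.\ $f\le h$ on $(0,\delta)$. The analogous argument for a component $(a,b)$ of $B$, now using $q:=f+h$ and the inequality $q'(b)<0$ noted above, rules out $b<\delta$; so $B$ is empty or equals $(a,\delta)$ with $a>0$. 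In either case there is $\delta^*\in(0,\delta]$ with $(0,\delta^*)\subseteq C$, so $|f|\le h$ and $f'<h'$ hold on all of $(0,\delta^*)$. Then $p=f-h$ is $\le 0$ and strictly decreasing on $(0,\delta^*)$, hence bounded (below by $p(\delta^*)$, above by its finite nonpositive limit at $0^+$); therefore $\int_\varepsilon^{\delta^*}p$ stays bounded while $\int_\varepsilon^{\delta^*}h\to+\infty$, so again $\int_\varepsilon^T f\to+\infty$, the required contradiction.

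I expect the only genuine obstacle to be the middle step, namely ruling out an interleaving of $C$ with its complement near $0$; the point is the topological observation that wherever $\{|f|\le h\}$ meets $\{f>h\}$ or $\{f<-h\}$ from one side, the sign of $f'-h'$ or $f'+h'$ dictated by the contradiction hypothesis is incompatible with the behaviour of $f-h$ or $f+h$ on the other side, which collapses the picture to a single interval $(0,\delta^*)$ on which $f-h$ is forced to be monotone. Everything else (the intermediate value arguments, the boundedness of a monotone function, the relative closedness of $C$) is routine.
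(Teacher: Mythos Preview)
Your argument is correct. The paper's proof, however, takes a more direct route: rather than arguing by contradiction, it constructs the sequence $(t_k)$ explicitly. The paper first observes (as you do implicitly) that points with $|f|<h$ must accumulate at $0$, and then splits into two cases. If $|f|\le h$ for all small $t$, it notes that $g:=h-f$ cannot be increasing on any right neighbourhood of $0$ (else $f-h$ would be bounded, forcing $\int f\to+\infty$), and picks $t_k$ where $g'(t_k)\le 0$; this is essentially your final paragraph. If instead $|f(r_k)|>h(r_k)$ along some $r_k\searrow 0$, the paper simply takes $t_k$ to be the last time before $r_k$ (or first time after $r_k$) at which $|f|=h$, and reads off $f'(t_k)\ge h'(t_k)$ from the crossing direction.

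Your approach is different in that you assume no such sequence exists and then perform a topological analysis of the three sets $A=\{f>h\}$, $B=\{f<-h\}$, $C=\{|f|\le h\}$, using the sign of $f'-h'$ at points of $C$ to rule out components of $A$ entirely and to force $B$ to be a single interval away from $0$, thereby reducing to the situation $(0,\delta^*)\subseteq C$. This is sound, and the endpoint arguments (that $p'(a)<0$ with $p(a)=0$ contradicts $p>0$ to the right, and similarly for $q$ at $b$) are exactly the crossing observations the paper exploits directly. What the paper's approach buys is brevity: its Case~2 dispatches the situation where $|f|>h$ infinitely often in two lines, whereas you spend the bulk of your argument eliminating that possibility structurally. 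What your approach buys is a slightly cleaner separation of concerns: once you have collapsed everything to $(0,\delta^*)\subseteq C$, the monotonicity of $f-h$ and the divergence of $\int h$ finish things uniformly, without any further case analysis.
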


\begin{proof}
    First we note that there must exist a sequence $s_k \searrow 0$ such that $\abs{f(s_k)} < h(s_k)$ for all $k\in\Z_+$, otherwise we would have $\int_0^T f(s)\,ds =+ \infty$ or $\int_0^T f(s)\,ds = -\infty$. To control $f'$, we distinguish two cases.
    
    \textbf{Case 1:} We have $\abs{f(t)} \le h(t)$ for all small $t$. In that case, consider $g(t) := h(t)-f(t)$. The function $g$ cannot be always increasing for small $t$, otherwise we would have $\int_0^T f(s)\,ds = \infty$. Consequently there must be a sequence $t_k \searrow 0$ such that $g'(t_k) \le 0$.
    
    \textbf{Case 2:} We have $\abs{f(r_k)} > h(r_k)$ along a sequence $r_k \searrow 0$. We can pick the sequence such that either $f(r_k) > h(r_k)$ for all $k$ or $f(r_k) < -h(r_k)$ for all $k$. In the former case $f(r_k) > h(r_k)$ for all $k$, let $t_k = \sup\{ t < r_k \mid f(t) \le h(t) \}$ (this set is non-empty due to the existence of a sequence $s_k$ with $\abs{f(s_k)} < h(s_k)$). Then $f(t_k) = h(t_k)$ and $f'(t_k) \ge h'(t_k)$ as desired. In the latter case $f(r_k) < -h(r_k)$ for all $k$, let $t_k = \inf\{ t > r_k \mid f(t) \ge -h(t) \}$ (again, $(t_k)$ is well-defined and tends to $0$ due to the existence of $(s_k)$ as above). Then $f(t_k) = h(t_k)$ and $f'(t_k) \ge -h'(t_k) \ge h'(t_k)$ since $h$ is non-increasing.
\end{proof}

\begin{corollary}\label{Cor:310}
Consider the same setup as \cref{le:nice_subseq}, and suppose additionally that $f \ge 0$. 
Then there exists a sequence $t_k \searrow 0$ such that
\begin{equation}\label{maincondtk}
f(t_k)\le h(t_k) \quad\text{and}\quad \abs{f'(t_k)} \le \abs{h'(t_k)},\quad k\in\Z_+.
\end{equation}
\end{corollary}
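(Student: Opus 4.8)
My plan is to argue directly, in the spirit of \cref{le:nice_subseq}, noting that \cref{le:nice_subseq} itself only yields the one-sided bound $f'\ge h'$ whereas here the two-sided bound $\abs{f'}\le\abs{h'}$ is needed. Since $f\ge0$ and $\int_\varepsilon^T f$ does not diverge, $\int_0^T f(s)\,ds<\infty$; and since $h$ is non-increasing with $\int_0^T h=\infty$, we have $h'\le0$ (so $\abs{h'}=-h'$) and $h(0+)=+\infty$. Call $m\in(0,T]$ a \emph{good point} if $f(m)\le h(m)$ and $\abs{f'(m)}\le\abs{h'(m)}$; it suffices to produce good points accumulating at $0$. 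I split into two cases.

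\emph{Case 1: $f\le h$ on some interval $(0,\delta)$.} Here it is enough to show that $\{t:\abs{f'(t)}\le -h'(t)\}$ accumulates at $0$, since each of its points inside $(0,\delta)$ is good. Suppose it does not: then $\abs{f'(t)}>-h'(t)\ge0$ on some $(0,\delta')$, so $f'$ never vanishes there; an interior sign change of $f'$ would, by the extreme value theorem and Fermat's theorem, force an interior critical point of $f$, so $f'$ has a constant sign on $(0,\delta')$. If $f'>0$ there, $f$ is increasing and bounded near $0$, hence $\int_0^{t_0}f'\,ds\le f(t_0)<\infty$ for $t_0:=\delta'/2$; but $f'>-h'\ge0$ pointwise gives $\int_\varepsilon^{t_0}f'\,ds\ge\int_\varepsilon^{t_0}(-h')\,ds=h(\varepsilon)-h(t_0)\to+\infty$, a contradiction. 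If $f'<0$ there, $\abs{f'}>-h'$ reads $f'<h'$; integrating over $(\varepsilon,t_0)$ gives $f(\varepsilon)\ge h(\varepsilon)-C$ with $C:=h(t_0)-f(t_0)$, so $\int_0^{t_0}f\,ds\ge\int_0^{t_0}(h-C)\,ds=+\infty$, contradicting $\int_0^T f<\infty$.

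\emph{Case 2: $f(t)>h(t)$ for values of $t$ arbitrarily close to $0$.} Then $\{f<h\}$ also accumulates at $0$ (else $f\ge h$ near $0$ and $\int_0^\delta f\ge\int_0^\delta h=+\infty$). Fix $\varepsilon\in(0,T)$; pick $t_1\in(0,\varepsilon)$ with $f(t_1)>h(t_1)$ and then $t_0\in(0,t_1)$ with $f(t_0)<h(t_0)$, and let $(p,q)$ be the connected component of the open set $\{f<h\}$ containing $t_0$. Since $t_1>t_0$ and $t_1\notin\{f<h\}$ we get $q\le t_1<\varepsilon$; and $p>0$, for otherwise $(0,q)\subseteq\{f<h\}$ contradicts that $\{f>h\}$ accumulates at $0$. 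By continuity $f(p)=h(p)$ and $f(q)=h(q)$, so $f-h$ is continuous on $[p,q]$, vanishes at the endpoints and is strictly negative on $(p,q)$; thus it attains its minimum at some interior $m\in(p,q)\subseteq(0,\varepsilon)$, where $(f-h)'(m)=0$, i.e.\ $f'(m)=h'(m)$, so $\abs{f'(m)}=\abs{h'(m)}$, while $f(m)<h(m)$. Hence $m$ is a good point in $(0,\varepsilon)$; as $\varepsilon$ was arbitrary, good points accumulate at $0$ and we extract the required sequence $t_k\searrow0$.

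The step I expect to be most delicate is Case 2: one must ensure the component $(p,q)$ is genuinely bounded with $f=h$ at \emph{both} endpoints (the only possible exceptions being a single component reaching the right end $T$ and a — here excluded — component of the form $(0,q)$), and that the good points so produced really cluster at $0$; the argument above bypasses this by exhibiting, for every $\varepsilon>0$, a good point in $(0,\varepsilon)$. In Case 1 the one subtle point is the constant-sign claim for $f'$, which relies on $f$ being everywhere differentiable so that Fermat's theorem applies.
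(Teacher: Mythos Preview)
Your argument is correct and takes a genuinely different route from the paper. The paper first invokes \cref{le:nice_subseq} to obtain a sequence with $f(t_k)\le h(t_k)$ and $f'(t_k)\ge h'(t_k)$, and then, for those $t_k$ at which $f'(t_k)>\abs{h'(t_k)}$, slides leftward to a nearby point $\tilde t_k$ where $f'(\tilde t_k)=\abs{h'(\tilde t_k)}$ via Darboux's intermediate value property for derivatives (applied to $(f+h)'$), checking along the way that $f(\tilde t_k)<f(t_k)\le h(t_k)\le h(\tilde t_k)$. Your proof, by contrast, is self-contained and does not lean on \cref{le:nice_subseq}: you split directly into the regimes of eventual domination $f\le h$ versus oscillation across $h$, and in Case~2 you extract good points as interior minima of $f-h$ on excursion intervals of $\{f<h\}$, which is arguably cleaner and more transparent. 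The paper's approach has the advantage of reusing the already-proved lemma; yours has the advantage of being a direct one-shot argument. One small caution: in Case~1 the manipulations with $\int f'$ and $\int(-h')$ tacitly assume these derivatives are integrable, which the hypotheses do not guarantee. You can sidestep this entirely by arguing via the mean value theorem that $(f+h)'>0$ (resp.\ $(f-h)'<0$) forces $f+h$ (resp.\ $f-h$) to be strictly monotone, yielding $f(\varepsilon)<f(t_0)+h(t_0)-h(\varepsilon)\to-\infty$ (resp.\ $f(\varepsilon)>h(\varepsilon)-C$) directly; this is the substance of what you wrote, just phrased without the FTC.
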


\begin{proof}
    Let $(t_k)_{k\in\Z_+}$ be a sequence as in \cref{le:nice_subseq}. Fix now $k\in\Z_+$. If $f'(t_k) \le \abs{h'(t_k)}$, then, by \cref{le:nice_subseq} we have  $f'(t_k) \ge -\abs{h'(t_k)}$ and $f(t_k)\le h(t_k)$. Hence the point $t_k$ satisfies \eqref{maincondtk}. 
    
   Otherwise, if  $f'(t_k) > \abs{h'(t_k)}$, define $s_k = \sup\{ t \le t_k \mid f'(t) \le \abs{h'(t)} \}$ (this set is non-empty, otherwise we would have $f(t) \to -\infty$ as $t \searrow 0$). By definition, we have $f'(t) > \abs{h'(t)}$ for $t \in {]s_k, t_k]}$, and hence also $f(s_k) < f(t_k)$. Moreover, we find some $r_k \le s_k$ close to $s_k$ with $f'(r_k) \le \abs{h'(r_k)}$. By continuity, we still have $f(t) < f(t_k)$ for $t \in [r_k,t_k[$. Since the derivative of any differentiable function satisfies the intermediate value theorem, we find some $\wt t_k \in [r_k,t_k]$ such that $f'(\wt t_k) = \abs{h'(\wt t_k)}$.   
   Then we also have $f(\wt t_k) < f(t_k) \le h(t_k) \le h(\wt t_k)$ as desired.
\end{proof}

We now proceed to the main part of our proof. In the following, we denote for $n\in\Z_+$, $\alpha\in(0,\pi)$, and $\eps>0$
\begin{equation*}
I_n(\alpha):= \int_0^4 u^{-n} \phi(\alpha,u)\, du;\qquad
I_n^{\eps}(\alpha): = \int_\eps^4 u^{-n} \phi(\alpha,u)\, du.
\end{equation*}
From the equation \eqref{FPK}, we will deduce a recursive system of ODEs that are satisfied for the functions $I_n$. In fact, the relation is satisfied for general $n \in \RR$ but we will use it only with $n\in\Z_+$.

\begin{lemma}\label{L:mainlemma} Let $n\in\Z_+$ be fixed. Suppose that either $n=0$ or $I_n$ is continuous (and finite) on  $(0,\pi)$. Assume that for any $\delta>0$ there exists a sequence $(\eps_k)_{k\in\Z_+}$ converging to $0$ such that 
\begin{equation}\label{formeraQ}
\eps_k^{-n}\int_\delta^{\pi-\delta}  \phi(\alpha,\eps_k)\,d\alpha \to 0
\end{equation}
Then either $I_{n+1} = \infty$ everywhere on $(0,\pi)$ or $I_{n+1}$ is twice differentiable on $(0,\pi)$ and satisfies the following ODE
\begin{multline}\label{eq:Iq_ode}
		0 = nI_n - 4n\sin^2\alpha \,I_{n+1}
		+(\kappa-4)(3\sin^2\alpha\cos^2\alpha-\sin^4\alpha) \,I_{n+1} \\
		+(3\kappa-4)\sin^3\alpha\cos\alpha \,I_{n+1}'
		+\frac{\kappa}{2}\sin^4\alpha \,I_{n+1}'' .
	\end{multline}
\end{lemma}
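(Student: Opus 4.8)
The plan is to multiply the Fokker--Planck equation \eqref{eq:fp_ualpha} by $u^{-(n+1)}$ and integrate in $u$ over $[\eps,4]$, then let $\eps\searrow0$ along a good subsequence. First I would rewrite the $\d_u$-term using integration by parts: since $\d_u\big((u-4\sin^2\alpha)\phi\big)\cdot u^{-(n+1)}$ integrates (in $u$) to a boundary term at $u=4$ and $u=\eps$ plus $(n+1)\int_\eps^4 u^{-(n+2)}(u-4\sin^2\alpha)\phi\,du = (n+1)I_n^\eps - 4(n+1)\sin^2\alpha\, I_{n+1}^\eps$ up to the boundary contributions. Here $\phi(\alpha,4)=0$ (from the support statement, since $\psi$ vanishes for $y\ge2$), so the $u=4$ boundary term drops; the $u=\eps$ boundary term is $-\eps^{-(n+1)}(\eps-4\sin^2\alpha)\phi(\alpha,\eps)$, which after integrating over $\alpha\in[\delta,\pi-\delta]$ is controlled by hypothesis \eqref{formeraQ} (the $\eps\cdot\eps^{-(n+1)}\phi$ piece is even smaller). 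The other two terms of \eqref{eq:fp_ualpha}, carrying the factor $u^{-1}$, simply produce $u^{-(n+1)}$ inside the integral, giving $\frac{\kappa-4}{?}$... more precisely $(\kappa-4)\d_\alpha(\sin^3\alpha\cos\alpha\, I_{n+1}^\eps) + \frac{\kappa}{2}\d_\alpha(\sin^4\alpha\,\d_\alpha I_{n+1}^\eps)$, once one justifies differentiating under the integral sign on compact $\alpha$-subintervals (using smoothness of $\phi$ and, for $n\ge1$, continuity/finiteness of $I_n$ together with monotone/dominated convergence to upgrade from $I_n^\eps$ to $I_n$).

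So for each $\eps$ (in the good sequence $\eps_k$) one obtains an \emph{approximate} identity: an integrated-in-$\alpha$ version of \eqref{eq:Iq_ode} with $I_{n+1}$ replaced by $I_{n+1}^{\eps_k}$ and with an error term that vanishes after integrating over $[\delta,\pi-\delta]$. The next step is to pass to the limit. If $I_{n+1}\equiv\infty$ we are in the first alternative and there is nothing to prove, so assume $I_{n+1}(\alpha_0)<\infty$ for some $\alpha_0$; monotone convergence gives $I_{n+1}^{\eps_k}\to I_{n+1}$ pointwise, and I would like to conclude $I_{n+1}$ is finite and smooth on all of $(0,\pi)$ and satisfies \eqref{eq:Iq_ode}. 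The clean way to do this is to view the integrated identity as an integral equation of the form \eqref{inteqn} for the vector $x_k=(I_{n+1}^{\eps_k}, (I_{n+1}^{\eps_k})')$ on a compact interval $[S,T]\subset(0,\pi)$: the $\frac{\kappa}{2}\sin^4\alpha\,I''$ term lets one solve for $I''$ in terms of lower-order data (since $\sin^4\alpha$ is bounded away from $0$ on $[S,T]$), so the system has the required linear-growth form $x'=F(\alpha,x)+g_k$, with $g_k$ coming from the $I_n^{\eps_k}$ term (which converges, by the $n$-th level hypothesis, to $I_n$) and $h_k$ the boundary error (which $\to0$ uniformly by \eqref{formeraQ}). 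Then \cref{le:ode_approximation} yields a subsequential uniform limit solving the limiting integral equation, i.e. $I_{n+1}\in C^1$ with an absolutely continuous derivative satisfying \eqref{eq:Iq_ode} weakly; bootstrapping via the ODE itself (all coefficients smooth, leading coefficient nonzero) gives $I_{n+1}\in C^2$ and the pointwise ODE. A short extra argument — if $I_{n+1}$ were finite somewhere but $+\infty$ elsewhere, the ODE solution on a maximal interval would have to blow up, contradicting that $I_{n+1}$ is a monotone limit of the finite smooth $I_{n+1}^{\eps_k}$ — rules out the mixed case, so either $I_{n+1}\equiv\infty$ or $I_{n+1}$ is everywhere finite and $C^2$.

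I expect the main obstacle to be the bookkeeping around the boundary term at $u=\eps$ and the justification of differentiation under the integral: one must show that $\d_\alpha I_{n+1}^\eps$ and $\d_{\alpha\alpha}I_{n+1}^\eps$ really are given by integrating $\d_\alpha\phi$, $\d_{\alpha\alpha}\phi$, uniformly enough in $\eps$ that the Arzelà--Ascoli/\cref{le:ode_approximation} machinery applies — this is where local smoothness of $\phi$ on $\R\times(0,\infty)$ (\cref{T:main}) and careful use of the $n$-th level finiteness of $I_n$ enter. The hypothesis \eqref{formeraQ} is tailored exactly so that the only genuinely dangerous term, $\eps_k^{-n}\int\phi(\alpha,\eps_k)\,d\alpha$, is killed; everything else is routine once the integral-equation formulation is set up and \cref{le:ode_approximation} is invoked.
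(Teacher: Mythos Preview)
Your overall strategy --- multiply the FPK equation by a power of $u$, integrate in $u\in[\eps,4]$, recast the resulting identity as a first-order system for $x_k=(I_{n+1}^{\eps_k},(I_{n+1}^{\eps_k})')$, and pass to the limit via \cref{le:ode_approximation} --- is exactly the paper's. But two points need attention.

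First, a bookkeeping slip: you should multiply \eqref{eq:fp_ualpha} by $u^{-n}$, not $u^{-(n+1)}$. With the exponent $-(n+1)$ your integration by parts produces $(n+1)I_{n+1}^\eps-4(n+1)\sin^2\alpha\,I_{n+2}^\eps$, not the $I_n^\eps,I_{n+1}^\eps$ you wrote, and the boundary term becomes $\eps^{-(n+1)}(\eps-4\sin^2\alpha)\phi(\alpha,\eps)$, whose dominant piece $\eps^{-(n+1)}\phi$ is \emph{not} killed by \eqref{formeraQ}. With $u^{-n}$ everything lines up: the coefficients are $n$ (matching \eqref{eq:Iq_ode}), the $u^{-1}$ already present in the $\alpha$-derivative terms gives exactly $I_{n+1}^\eps$, and the boundary remainder is $\eps^{-n}(\eps-4\sin^2\alpha)\phi(\alpha,\eps)$, controlled by \eqref{formeraQ}.

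Second, and more substantively, you have not verified the key hypothesis of \cref{le:ode_approximation}: the existence of points $\alpha_k\in[\delta,\pi-\delta]$ with $\sup_k|x_k(\alpha_k)|<\infty$, i.e.\ with \emph{both} $I_{n+1}^{\eps_k}(\alpha_k)$ and $(I_{n+1}^{\eps_k})'(\alpha_k)$ uniformly bounded. Knowing $I_{n+1}(\alpha_0)<\infty$ bounds the first component at $\alpha_0$, but gives no control on the derivative there; and without this the Gr\"onwall/Arzel\`a--Ascoli step cannot start. This is where the real work lies, and the paper handles it by a short case analysis: if one can find $\alpha'<\alpha_0<\alpha''$ in $[\delta,\pi-\delta]$ with $I_{n+1}(\alpha')$, $I_{n+1}(\alpha'')>I_{n+1}(\alpha_0)$, then for large $k$ the same strict inequalities hold for $I_{n+1}^{\eps_k}$, so one may take $\alpha_k\in\argmin_{[\alpha',\alpha'']}I_{n+1}^{\eps_k}$, which lies in the interior and hence has $(I_{n+1}^{\eps_k})'(\alpha_k)=0$ and $I_{n+1}^{\eps_k}(\alpha_k)\le I_{n+1}(\alpha_0)$. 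If instead $I_{n+1}\le I_{n+1}(\alpha_0)$ on all of $[\delta,\alpha_0]$ (or on $[\alpha_0,\pi-\delta]$), then $I_{n+1}^{\eps_k}$ is uniformly bounded on that interval, and the mean value theorem supplies $\alpha_k$ with $|(I_{n+1}^{\eps_k})'(\alpha_k)|\le I_{n+1}(\alpha_0)/(\alpha_0-\delta)$. Once this is in place, \cref{le:ode_approximation} applies on $[\delta,\pi-\delta]$ for every $\delta>0$, the subsequential limit automatically coincides with the monotone limit $I_{n+1}$, and your bootstrapping to $C^2$ goes through; no separate ``mixed case'' argument is needed.
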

\begin{proof}
Fix $n\in\Z_+$. Let  $\eps>0$. Multiplying \eqref{FPK} by $u^{-n}$ and integrating in $u \in [\varepsilon,4]$ yields
\begin{align}
0 ={}& \int_\varepsilon^4 u^{-n} \partial_u((u-4\sin^2\alpha)\, \phi) \, du\nn\\
		&+(\kappa-4)(3\sin^2\alpha\cos^2\alpha-\sin^4\alpha) \int_\varepsilon^4 u^{-n-1}\, \phi \, du \nn\\
		&+(3\kappa-4)\sin^3\alpha\cos\alpha \int_\varepsilon^4 u^{-n-1}\, \d_\alpha\phi \, du
		+\frac{\kappa}{2}\sin^4\alpha \int_\varepsilon^4 u^{-n-1}\, \d^2_{\alpha\alpha}\phi \, du \nn\\
		={}& -\eps^{-n} (\eps-4\sin^2\alpha)\, \phi(\alpha,\eps) +nI_n^{\varepsilon} - 4n\sin^2\alpha \,I_{n+1}^{\varepsilon}\nn\\
		&+(\kappa-4)(3\sin^2\alpha\cos^2\alpha-\sin^4\alpha) \,I_{n+1}^{\varepsilon} \nn\\
		&+(3\kappa-4)\sin^3\alpha\cos\alpha \,(I_{n+1}^{\varepsilon})'
		+\frac{\kappa}{2}\sin^4\alpha \,(I_{n+1}^{\varepsilon})'' .\label{prelimpde}
	\end{align}
We would like to apply \cref{le:ode_approximation} to pass to the limit as $\eps\to0$ in the above ODE. Suppose now that $I_{n+1}$ is not infinite everywhere, i.e. $I_{n+1}(\alpha_0) < \infty$ for some $\alpha_0 \in (0,\pi)$. Fix arbitrary $\delta>0$ small enough such that $\alpha_0 \in (\delta,\pi-\delta)$ and set $S\defeq\delta$, $T\defeq\pi-\delta$,
\begin{align*}
x_k &\defeq \begin{pmatrix}
           I_{n+1}^{(\eps_k)} \\
           \d_\alpha I_{n+1}^{(\eps_k)}
         \end{pmatrix},\\
F(\alpha,x^{(1)},x^{(2)}) &\defeq \begin{pmatrix}
        x^{(2)}\\[1ex]
        \frac{2}{\kappa\sin^2 \alpha}\left( 4n x^{(1)}-x^{(1)}(\kappa-4)(3\cos^2\alpha-\sin^2\alpha) 
		-(3\kappa-4)x^{(2)}\sin\alpha\cos\alpha  \right)
         \end{pmatrix},\\
g_k(\alpha) &\defeq \begin{pmatrix} 0 \\ -\frac{2}{\kappa\sin^4 \alpha} nI_n^{\eps_k}(\alpha) \end{pmatrix},\\
h_k(\alpha_1,\alpha_2) &\defeq \begin{pmatrix} 0 \\  \int_{\alpha_1}^{\alpha_2}\frac{2}{\kappa\sin^4 \alpha} \eps_k^{-n} (\eps_k-4\sin^2\alpha)\, \phi(\alpha,\eps_k)\,d\alpha \end{pmatrix}
\end{align*}
where $\eps_k$ are the same as in the condition \eqref{formeraQ}. 
It is obvious that on $[\delta, \pi-\delta]$ the function $F$ is continuous 
and has linear growth in $x^{(1)}, x^{(2)}$. Moreover, $g_k(\alpha) \to g(\alpha) = \begin{pmatrix} 0 & -\frac{2nI_n(\alpha)}{\kappa\sin^4 \alpha} \end{pmatrix}^T$ monotonically by the assumptions of the Lemma. Finally, thanks to \eqref{formeraQ}, we have $\norm{h_k}_\infty \to 0$ on $[\delta, \pi-\delta]^2$.
% We see also that, thanks to \eqref{formeraQ}, $G_k(\alpha)$ converges to $G(\alpha):=-\frac{2nI_n(\alpha)}{\kappa\sin^4 \alpha}$ in $L_1([\delta, \pi-\delta])$. The function $G$ is continuous by the assumptions of the lemma.

%Finally it remains to verify condition \eqref{sequence}. 
It remains to find a sequence $\alpha_k$ such that $I_{n+1}^{(\eps_k)}(\alpha_k)$ and $(I_{n+1}^{(\eps_k)})'(\alpha_k)$ are bounded.

First assume that there exists $\alpha',\alpha''\in[\delta,\pi-\delta]$ such that $\alpha' < \alpha_0 < \alpha''$ and $I_{n+1}(\alpha_0)< I_{n+1}(\alpha')$, $I_{n+1}(\alpha_0)< I_{n+1}(\alpha'')$ (at this point,  we allow $I_{n+1}(\alpha')$ or  $I_{n+1}(\alpha'')$ to be infinite). Then for all large enough $k$ we have $I_{n+1}^{\eps_k}(\alpha_0)< I_{n+1}^{\eps_k}(\alpha')$, $I_{n+1}^{\eps_k}(\alpha_0)< I_{n+1}^{\eps_k}(\alpha'')$. Pick some $\alpha_k \in \operatorname{argmin}_{[\alpha',\alpha'']} I_{n+1}^{\eps_k}$. By above, $\alpha_k\in(\alpha',\alpha'')$ and hence $(I_{n+1}^{\varepsilon_k})'(\alpha_k) = 0$. Moreover, $I_{n+1}^{\varepsilon_k}(\alpha_k) \le I_{n+1}^{\varepsilon_k}(\alpha_0)\le I_{n+1}(\alpha_0)$. Thus the sequence $\bigl(I_{n+1}^{\varepsilon_k}(\alpha_k),(I_{n+1}^{\varepsilon_k})'(\alpha_k)\bigr)_{k\in\Z_+}$ is bounded. 
% By passing to a subsequence, we see that \eqref{sequence} holds.

If $I_{n+1}(\alpha_0)\ge I_{n+1}(\alpha)$ for all $\alpha\in[\delta,\alpha_0]$, then 
\begin{equation}\label{case1proof}
\sup_{\alpha\in[\delta,\alpha_0]}I_{n+1}^{\varepsilon_k}(\alpha)\le 
\sup_{\alpha\in[\delta,\alpha_0]}I_{n+1}(\alpha)\le I_{n+1}(\alpha_0).
\end{equation}
Hence for each $k$ there exists $\alpha_k\in[\delta,\alpha_0]$ such that $\abs{(I_{n+1}^{\varepsilon_k})'(\alpha_k)}\le I_{n+1}(\alpha_0)/(\alpha_0-\delta)$. Combining this with \eqref{case1proof} we see again that the sequence $\bigl(I_{n+1}^{\varepsilon_k}(\alpha_k),(I_{n+1}^{\varepsilon_k})'(\alpha_k)\bigr)_{k\in\Z_+}$ is bounded.
% and it remains to pass to a subsequence.

The case when $I_{n+1}(\alpha_0)\ge I_{n+1}(\alpha)$ for all $\alpha\in[\alpha_0,\pi-\delta]$ is treated in a similar way.

Thus we see that all the conditions of  \cref{le:ode_approximation} are satisfied. By passing to the limit as $\eps\to0$ in \eqref{prelimpde} and using continuity of $g$, we get \eqref{eq:Iq_ode}.
\end{proof}

As we mentioned before, we are planning to apply \cref{L:mainlemma} recursively starting with $n=0$. 
To verify condition \eqref{formeraQ} we will use the following result.

\begin{lemma}\label{L:310}
For any $n\ge0$ we have
\begin{equation}\label{goodbound}
\int_0^4\int_0^\pi u^{-n-1}\sin^2\alpha \,\phi(\alpha,u)\,d\alpha du=
\frac14\int_0^4\int_0^\pi u^{-n} \phi(\alpha,u)\,d\alpha du.
\end{equation}
In case both sides of this identity are finite,
%Furthermore, assume that for some $n\ge0$ one has $\int_0^4\int_0^\pi u^{-n} %\phi(\alpha,u)\,d\alpha du<\infty$. Then
%\begin{equation}\label{goodbound2}
%\int_0^4\int_0^\pi u^{-n-1}\sin^2\alpha\, \phi(\alpha,u)\,d\alpha du<\infty
%\end{equation}
for any $\delta>0$ there exists a sequence $(\eps_k)_{k\in\Z_+}\searrow 0$ such that 
\begin{equation}\label{eq:ass_I-1}
\eps_k^{-n}\int_\delta^{\pi-\delta} \phi(\alpha,\eps_k)\,d\alpha \to 0\quad\text{as $k\to\infty$}.
\end{equation} 	
\end{lemma}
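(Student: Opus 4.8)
The plan is to reduce \eqref{goodbound} to the pointwise identity
\begin{equation*}
u\int_0^\pi \phi(\alpha,u)\,d\alpha \;=\; 4\int_0^\pi \sin^2\alpha\,\phi(\alpha,u)\,d\alpha \qquad\text{for a.e.\ }u\in(0,4). \tag{$\star$}
\end{equation*}
Indeed, once $(\star)$ is known, multiplying by $u^{-n-1}$ and integrating over $u\in(0,4)$ yields \eqref{goodbound} at once, Tonelli's theorem handling the case where both sides equal $+\infty$. Observe that, since $|\gamma_1|^2=(\Re\gamma_1)^2+(\Im\gamma_1)^2$ and $\sin^2(\arg\gamma_1)=(\Im\gamma_1)^2/|\gamma_1|^2$, identity $(\star)$ is exactly the statement $\E\bigl[\,|\gamma_1|^{-2}\bigm|(\Im\gamma_1)^2=u\,\bigr]=\tfrac14$ for a.e.\ $u$.

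To prove $(\star)$ I would deliberately avoid integrating the Fokker--Planck equation \eqref{eq:fp_ualpha} over $\alpha\in(0,\pi)$, as that generates boundary contributions at $\alpha\in\{0,\pi\}$ of the form $\sin^3\alpha\cos\alpha\,\phi$ and $\sin^4\alpha\,\d_\alpha\phi$, and controlling these is precisely the delicate boundary analysis the rest of the section is built to circumvent. Instead I would argue via stationarity. By \Cref{L:31,L:UIM} there is a stationary solution $(\wh X_t,\wh Z_t)_{t\ge0}$ of \eqref{eq11}--\eqref{eq22} with marginal law $\pi=\Law(\Re\gamma_1,\log\Im\gamma_1)$. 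Let $F\colon(0,\infty)\to\R$ be smooth and bounded with $F'$ supported in a compact subset of $(0,4)$, and put $f(x,z):=F(e^{2z})$, a bounded smooth function. The crucial feature is that $\wh Z$ has \emph{no} martingale part, since it solves the pure-drift equation \eqref{eq22}; consequently $t\mapsto f(\wh X_t,\wh Z_t)=F(e^{2\wh Z_t})$ is continuously differentiable — the coefficients along the path are non-singular because $\wh Z_t\ge\wh Z_0-t/2$ with $\wh Z_0$ a.s.\ finite (by \eqref{zeroprop}) — with
\begin{equation*}
\frac{d}{dt}f(\wh X_t,\wh Z_t) \;=\; 2e^{2\wh Z_t}F'(e^{2\wh Z_t})\Bigl(-\tfrac12+\tfrac{2}{\wh X_t^2+e^{2\wh Z_t}}\Bigr) \;=\; (Lf)(\wh X_t,\wh Z_t),
\end{equation*}
and no It\^o correction is needed. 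Hence $f(\wh X_t,\wh Z_t)-f(\wh X_0,\wh Z_0)=\int_0^t(Lf)(\wh X_s,\wh Z_s)\,ds$ holds pathwise. Since $F'$ is compactly supported in $(0,4)$, the function $Lf$ is bounded on $\R^2$; taking expectations under $\pi$ and using invariance gives $t\,\E_\pi[Lf]=\E_\pi[f(\wh X_t,\wh Z_t)]-\E_\pi[f(\wh X_0,\wh Z_0)]=0$, so $\E_\pi[Lf]=0$. Translating back (using $e^{2\wh Z_0}=(\Im\gamma_1)^2$ and $\wh X_0^2+e^{2\wh Z_0}=|\gamma_1|^2$ under $\pi$, and $2(\Im\gamma_1)^2(-\tfrac12+\tfrac2{|\gamma_1|^2})=-((\Im\gamma_1)^2-4\sin^2\arg\gamma_1)$) this reads
\begin{equation*}
\int_0^4 F'(u)\Bigl(\int_0^\pi (u-4\sin^2\alpha)\,\phi(\alpha,u)\,d\alpha\Bigr)\,du \;=\; 0 ,
\end{equation*}
and since $F'$ ranges over all smooth compactly supported functions on $(0,4)$, $(\star)$ follows.

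For the second assertion, assume both sides of \eqref{goodbound} are finite, fix $\delta>0$, and set $G(u):=\int_\delta^{\pi-\delta}\phi(\alpha,u)\,d\alpha\ge0$. Since $\sin^2\alpha\ge\sin^2\delta>0$ on $[\delta,\pi-\delta]$, we have $\int_0^4 u^{-n-1}G(u)\,du\le(\sin\delta)^{-2}\int_0^4\!\int_0^\pi u^{-n-1}\sin^2\alpha\,\phi(\alpha,u)\,d\alpha\,du<\infty$. If $\liminf_{u\searrow0}u^{-n}G(u)$ were strictly positive, then $u^{-n-1}G(u)\ge c\,u^{-1}$ for all small $u>0$, forcing $\int_0^{u_0}u^{-n-1}G(u)\,du=+\infty$, a contradiction. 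Hence $\liminf_{u\searrow0}u^{-n}G(u)=0$, which is exactly the existence of a sequence $\eps_k\searrow0$ with $\eps_k^{-n}\int_\delta^{\pi-\delta}\phi(\alpha,\eps_k)\,d\alpha\to0$.

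I expect the one genuinely non-routine point to be the admissibility of the test function $f$ and the justification that the displayed identity for $f(\wh X_t,\wh Z_t)$ holds \emph{exactly} and not merely modulo a local martingale; this is where the absence of noise in \eqref{eq22} is decisive, since it both removes the It\^o correction and promotes the identity to a pathwise one, thereby bypassing any estimate on $\phi$ or $\d_\alpha\phi$ near $\alpha\in\{0,\pi\}$. The remaining ingredients — the Tonelli reduction and the $\liminf$ argument — are elementary.
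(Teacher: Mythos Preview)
Your proof is correct and takes a genuinely different route from the paper's. The paper proves the pointwise identity $(\star)$ by integrating the Fokker--Planck equation \eqref{eq:fp_ualpha} in $\alpha$ over $[\delta,\pi/2]$, which produces boundary terms of the form $\sin^3\delta\cos\delta\,\phi(\delta,u)$ and $\sin^4\delta\,\d_\alpha\phi(\delta,u)$; to kill these as $\delta\searrow0$ it invokes the technical subsequence \cref{Cor:310} applied to $J(\delta)=\int_{u_0}^4 u^{-1}\phi(\delta,u)\,du$ with $h(t)=1/t$. You instead exploit stationarity of the diffusion $(\wh X,\wh Z)$ directly: because $\wh Z$ satisfies a pure-drift ODE, the test function $f(x,z)=F(e^{2z})$ yields a \emph{pathwise} chain rule with no It\^o correction and no local-martingale remainder, and the compact support of $F'$ in $(0,4)$ makes $Lf$ bounded, so $\E_\pi[Lf]=0$ follows immediately from invariance. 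This is cleaner and completely sidesteps the boundary analysis near $\alpha\in\{0,\pi\}$; it is in effect a direct probabilistic implementation of the Lyapunov-function argument the paper sketches in the remark following its proof (with $V=-\log u$ or $V=u^{-n+1}/(n-1)$), but without the mollification technicalities that remark alludes to. The derivation of the sequence $(\eps_k)$ in the second part is the same in both proofs.
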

Note that \eqref{goodbound} can be rewritten as
$$
\int_0^\pi  I_{n+1}(\alpha)\sin^2\alpha\,d\alpha =
\frac14\int_0^\pi I_n(\alpha)d\alpha.
$$

\begin{proof}
Fix arbitrary $\delta>0$. Integrating \eqref{eq:fp_ualpha} in $\alpha$ from $\delta$ to $\pi/2$ yields for any $u\in(0,4]$
\begin{equation}\label{diffeq}
\partial_u \int_\delta^{\pi/2} (u-4\sin^2\alpha)\phi(\alpha,u)\, d\alpha= 
\frac{(\kappa-4)}{u}\phi(\delta,u)\sin^3\delta\cos\delta +\frac{\kappa}{2u}\sin^4\delta\,\,\partial_\alpha\phi(\alpha,u)\bigg\rvert_{\alpha=\delta}.
\end{equation}
By \cref{T:main}, $\phi(\alpha,4)=0$ for any $\alpha\in(0,\pi/2)$. Fix now arbitrary $u_0\in(0,4]$
and denote 
\begin{equation*}
J(\alpha) \defeq I_1^{u_0}(\alpha) = \int_{u_0}^4 u^{-1}\phi(\alpha,u)du,\quad \alpha\in(0,\pi).
\end{equation*}
Integrating \eqref{diffeq} in $u$ from $u_0$ to $4$ we get
\begin{equation}\label{prelim}
\Bigl|\int_\delta^{\pi/2} (u_0-4\sin^2\alpha)\phi(\alpha,u_0)\, d\alpha\Bigr|\le C J(\delta)\delta^3 +C\delta^4 |J'(\delta)|.
\end{equation}

Let us pass to the limit in \eqref{prelim} as $\delta\to0$. Note that $A:=\int_0^1 J(\alpha)\,d\alpha$ is obviously finite. Hence, we can apply \cref{Cor:310} with $f=J$, $h(t)=1/t$. Then, there exists a sequence  $(\delta_k)_{k\in\Z_+}$, such that
\begin{equation*}
\delta_k\downarrow0,\,\,\, \delta_k J(\delta_k)\le 1,\,\,\, 
\delta_k^2 |J'(\delta_k)|\le 1
\end{equation*}
for all $k\in\Z_+$. Applying now \eqref{prelim} with $\delta=\delta_k$ and passing to the limit as $k\to\infty$, we get
\begin{equation*}
\int_0^{\pi/2} (u_0-4\sin^2\alpha)\phi(\alpha,u_0)\, d\alpha=0,
\end{equation*}
which by symmetry of $\phi$ implies 
\begin{equation*}
\int_0^{\pi} (u_0-4\sin^2\alpha)\phi(\alpha,u_0)\, d\alpha=0
\end{equation*}
for any $u_0\in(0,4]$. Dividing now this identity by $u_0^{n+1}$ and integrating in $u_0$, we get \eqref{goodbound}.

To show \eqref{eq:ass_I-1}, fix $\delta>0$. Assuming the left-hand side of \eqref{goodbound} to be finite, we get 
\begin{equation*}
\int_0^4 \int_\delta^{\pi-\delta} u^{-n-1}\phi(\alpha,u)\,d\alpha \,du\le
\frac{1}{\sin^2\delta}\int_0^4\int_\delta^{\pi-\delta} u^{-n-1}\sin^2\alpha\,\phi(\alpha,u)\,d\alpha \,du<\infty.
\end{equation*} 
Therefore there must exist a sequence of $\eps_k\searrow 0$ satisfying \eqref{eq:ass_I-1} because otherwise the left-hand side of the above inequality would be infinite.
\end{proof}

\begin{remark}
\cref{L:310} can be deduced from a general PDE argument \cite[Theorem~2.3.2 and inequality (2.3.2)]{BKRS}. Indeed, note that PDE \eqref{FPK} can be written as 
$$
\mathcal L^*\phi=0,
$$
where $\mathcal L=\frac\kappa{2u}\sin^4\alpha\,\d^2_{\alpha\alpha}+\frac{4+\kappa}{u}\cos\alpha\sin^3\alpha\,\d_\alpha+(4\sin^2\alpha-u)\,\d_u$, $u>0$, $\alpha\in(0,\pi)$. If $n=1$, take a Lyapunov function $V(\alpha,u):=-\log u$; otherwise set $V(\alpha,u):=\frac{1}{n-1}u^{-n+1}$. 
Then 
\begin{equation*}
\mathcal{L}V(\alpha,u)=(u-4\sin^2\alpha)u^{-n}.
\end{equation*}
Note however that even though $V$ does not satisfy all the conditions of \cite[Theorem~2.3.2]{BKRS}, a standard mollification argument and \cite[inequality (2.3.2)]{BKRS} yield
\eqref{goodbound}. However, writing up rigorously all the technical details gets a bit complicated, so we found it simpler to give a direct proof.
\end{remark}

We are now able to prove \eqref{mainidentity} rigorously. 

\begin{proof}[Proof of \cref{L:db1}]
Let us apply \cref{L:mainlemma} with $n=0$. We see that the right-hand side of \eqref{goodbound} is finite for $n=0$. Hence \cref{L:310} implies that\eqref{eq:ass_I-1} holds for $n=0$. Therefore, condition \eqref{formeraQ} is satisfied for $n=0$.

Note now that if $I_{1}=\infty$ for all $\alpha\in(0,\pi)$, then the left-hand side of \eqref{goodbound} with $n=0$ is infinite. However this is not the case. Thus, by  \cref{L:mainlemma}, the function $I_1$ is twice differentiable and solves
\begin{equation*}
(\kappa-4)(3\sin^2\alpha\cos^2\alpha-\sin^4\alpha) \,I_1+
(3\kappa-4)\sin^3\alpha\cos\alpha \,I_1'	+\frac{\kappa}{2}\sin^4\alpha \,I_1''=0.
\end{equation*}
This can be rewritten as 
\begin{equation}\label{denssetp1}
\partial_\alpha\Bigl((\kappa-4)\sin^3\alpha\cos\alpha\, I_1+\frac{\kappa}{2}\sin^4\alpha\, I_1'\Bigr)=0.
\end{equation} 

Let $\alpha \in (0,\pi)$. Then integrating \eqref{denssetp1} in $\alpha' \in [\alpha,\pi-\alpha]$, we get
\begin{equation}\label{denssetp2}
(\kappa-4)\sin^3\alpha\cos\alpha (I_1(\alpha)+I_1(\pi-\alpha))+\frac{\kappa}{2}\sin^4\alpha (I_1'(\alpha)-I_1'(\pi-\alpha))=0
\end{equation}
Recall that by \cref{T:main} we have that the density $\psi$ is symmetric, $\psi(x,y)=\psi(-x,y)$ for $x\in\R$, $y>0$. This implies that 
$\phi$ is also symmetric and $\phi(\alpha,u)=\phi(\pi-\alpha,u)$, 
$\d_\alpha\phi(\alpha,u)=-\d_\alpha\phi(\pi-\alpha,u)$ for $\alpha\in(0,\pi)$, $u>0$. Hence
$I_1(\alpha)=I_1(\pi-\alpha)$,  $I_1'(\alpha)=-I_1'(\pi-\alpha)$ and 
\eqref{denssetp2} yields 
\begin{equation}\label{denssetp3}
(\kappa-4)\sin^3\alpha\cos\alpha\, I_1(\alpha)+\frac{\kappa}{2}\sin^4\alpha\, I_1'(\alpha)=0.
\end{equation}
Therefore,
$$
\d_\alpha(\sin^{2-8/\kappa}\alpha\, I_1(\alpha))=0,
$$
and we finally get  
\begin{equation*}
I_1(\alpha)=c\sin^{8/\kappa-2}\alpha,\quad \alpha\in(0,\pi),
\end{equation*}
for some $c>0$. The precise value of $c$ follows from \eqref{goodbound}:
\begin{equation*}
\frac14=\int_0^4\int_0^\pi \frac1u\sin^2\alpha\, \phi(\alpha,u)\,d\alpha\, du=c\int_0^\pi \sin^{8/\kappa}\alpha \,d\alpha = c \sqrt{\pi}\frac{\Gamma(\frac12+\frac4\kappa)}{\Gamma(1+\frac4\kappa)},
\end{equation*}
which gives \eqref{mainidentity}. 
\end{proof}

\begin{proof}[Proof of \cref{T:other}(i)]
Note that the $\Law (\gamma(t))=\Law(t^{1/2}\gamma(1))$. Therefore
\begin{equation}\label{peng1}
\E\int_0^\infty \ind_{\gamma(t) \in \Lambda} \, dt
	= \int_0^\infty \pr(\gamma(t) \in \Lambda) \,dt 
	= \int_0^\infty \pr(\gamma(1) \in t^{-1/2}\Lambda) \,dt .
\end{equation}
Fix $0<a<b$, $0<\alpha<\beta<\pi$. First consider sets $\Lambda$ of the form 
\begin{equation}\label{formu}
\Lambda = \{x+iy \mid \cot^{-1}(x/y) \in [\alpha,\beta],\ y^2 \in [a,b] \}.
\end{equation}
Then, writing $\gamma(1) = \sqrt U(\cot A+i)$, we continue \eqref{peng1} in the following way
\begin{align*}
\E\int_0^\infty \ind_{\gamma(t) \in \Lambda} \, dt&=
\int_0^\infty \P(A\in[\alpha,\beta], U \in [a/t,b/t]) \,dt\\
&= 
\int_0^\infty\int_\alpha^\beta\int_{a/t}^{b/t} \phi(\alpha',u)\,d u d\alpha'  dt\\
&=\int_0^\infty\int_\alpha^\beta \frac{b-a}{u} \phi(\alpha',u)\,d\alpha' du\\
&=(b-a)\frac{\Gamma(1+\frac4\kappa)}{4\sqrt\pi\Gamma(\frac12+\frac4\kappa)}\int_\alpha^\beta (\sin\alpha')^{8/\kappa-2} \,d\alpha',
\end{align*}
where the last identity follows from \cref{L:db1}. Since
\begin{align*}
\int_\Lambda \bigl( 1+\frac{x^2}{y^2} \bigr)^{-4/\kappa} \,dx\,dy
&= \frac12\int_\alpha^\beta\int_a^b  (\sin \alpha')^{8/\kappa-2}\,du d\alpha'\\
&= \frac{b-a}{2} \int_\alpha^\beta  (\sin \alpha')^{8/\kappa-2}\,d\alpha',	
\end{align*}
we see that
\begin{equation*}
\E\int_0^\infty \ind_{\gamma(t) \in \Lambda} \, dt=\frac{\Gamma(1+\frac4\kappa)}{2\sqrt\pi\Gamma(\frac12+\frac4\kappa)}	\int_\Lambda \bigl( 1+\frac{x^2}{y^2} \bigr)^{-4/\kappa} \,dx\,dy.
\end{equation*}
Clearly, sets $\Lambda$ of the form \eqref{formu} generate the Borel $\sigma$-algebra on $\HH$. This implies \eqref{eq:zhan_law}.
\end{proof}

To prove \cref{T:other}(ii), we need the following key result.
\begin{lemma}\label{le:I-2_asymptotics}
Let $n\in\Z_+$, $n\ge1$. Then $\displaystyle\int_0^\pi I_n(\alpha)\,d\alpha$ is finite for $\kappa<8/(2n-1)$ and infinite for $\kappa\ge8/(2n-1)$.

Furthermore, let $\frac8\kappa>2n-3$. Then the function $I_n\colon(0,\pi)\to\R_+$ is continuous and for any $\delta>0$ there exists $\alpha_0=\alpha_0(n,\delta)\in(0,\pi/2)$ such that for $\alpha\in(0,\alpha_0)$
\begin{equation}\label{eqlim}
I_n(\alpha)\ge \alpha^{8/\kappa-2n}\abs{\log\alpha}^{-\delta}.
\end{equation}
If, additionally, $\frac8\kappa>(2n-3)$ and $\kappa<\frac{16}3$, then for any $\delta>0$ there exists $\alpha_0=\alpha_0(n,\delta)\in(0,\pi/2)$ such that for $\alpha\in(0,\alpha_0)$ 
\begin{equation}\label{eqlim2}
I_n(\alpha)\le \alpha^{8/\kappa-2n-\delta}.
\end{equation}
\end{lemma}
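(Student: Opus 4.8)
The plan is to argue by induction on $n\ge1$, writing $\beta:=8/\kappa$ throughout. The base case $n=1$ is immediate from \cref{L:db1}: since $I_1(\alpha)=c_1\sin^{8/\kappa-2}\alpha$ with $c_1=\frac{\Gamma(1+4/\kappa)}{4\sqrt\pi\,\Gamma(1/2+4/\kappa)}>0$, it is continuous on $(0,\pi)$, satisfies $I_1(\alpha)=c_1\alpha^{8/\kappa-2}(1+o(1))$ as $\alpha\searrow0$ (so \eqref{eqlim} and \eqref{eqlim2} hold for $n=1$, for any $\delta>0$ and small $\alpha$), and $\int_0^\pi I_1<\infty$ precisely when $8/\kappa-2>-1$, i.e. $\kappa<8=8/(2\cdot1-1)$. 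For the inductive step I assume the three assertions for $I_n$ and deduce them for $I_{n+1}$.

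The first step is to produce a workable form of the equation for $I_{n+1}$. When $\kappa<8/(2n-1)$ (equivalently $\beta>2n-1$ — this is the hypothesis under which assertions two and three for $I_{n+1}$ must be shown), the inductive hypothesis gives $\int_0^\pi I_n<\infty$, so the right-hand side of \eqref{goodbound} is finite; hence \cref{L:310} supplies condition \eqref{formeraQ} for the index $n$, and moreover $\int_0^\pi\sin^2\alpha\,I_{n+1}(\alpha)\,d\alpha=\tfrac14\int_0^\pi I_n<\infty$, which rules out $I_{n+1}\equiv\infty$. By \cref{L:mainlemma}, $I_{n+1}\in\CC^2((0,\pi))$ (in particular it is continuous) and satisfies \eqref{eq:Iq_ode}. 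Next I would note that the last three terms of \eqref{eq:Iq_ode} equal $\partial_\alpha G$ with $G:=(\kappa-4)\sin^3\alpha\cos\alpha\,I_{n+1}+\tfrac\kappa2\sin^4\alpha\,I_{n+1}'=\tfrac\kappa2\sin^{2+\beta}\alpha\,\partial_\alpha\bigl(\sin^{2-\beta}\alpha\,I_{n+1}\bigr)$, so that \eqref{eq:Iq_ode} reads $\partial_\alpha G=4n\sin^2\alpha\,I_{n+1}-nI_n$; the symmetry $I_{n+1}(\alpha)=I_{n+1}(\pi-\alpha)$ gives $G(\pi/2)=0$, and integrating this identity over $(0,\pi/2)$ together with the symmetric form $4\int_0^{\pi/2}\sin^2\alpha\,I_{n+1}=\int_0^{\pi/2}I_n$ of \cref{L:310} gives $G(0)=0$. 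Hence
\begin{equation*}
\tfrac\kappa2\sin^{2+\beta}\alpha\,\partial_\alpha\bigl(\sin^{2-\beta}\alpha\,I_{n+1}(\alpha)\bigr)=\int_0^\alpha\bigl(4n\sin^2 t\,I_{n+1}(t)-nI_n(t)\bigr)\,dt,\qquad\alpha\in(0,\tfrac\pi2].\tag{$\star$}
\end{equation*}

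The heart of the proof is the analysis of $(\star)$ near the regular singular point $\alpha=0$. Writing $\Phi:=\sin^{2-\beta}\alpha\,I_{n+1}$, the homogeneous modes of $I_{n+1}$ are $\alpha^{s_\pm}$ where $s^2+(5-\beta)s+6-(3+n)\beta=0$; evaluating this polynomial at $s=-3$ (value $-n\beta<0$) and at $s=\beta-2n-2$ (value $n(2(2n-1)-3\beta)<0$, using $\beta>2n-1$) shows $s_-<-3$ and $s_-<\beta-2n-2<s_+$. Using the inductive bounds $\alpha^{\beta-2n}|\log\alpha|^{-\delta}\lesssim I_n(\alpha)\lesssim\alpha^{\beta-2n-\delta}$, the particular solution forced by $nI_n$ has leading order $\tfrac{a_n}{12-\kappa(2n-1)}\alpha^{\beta-2n-2}$ with $a_n>0$ and $12-\kappa(2n-1)>0$. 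The over-singular mode $\alpha^{s_-}$ must be absent: otherwise $I_{n+1}(\alpha)\asymp\alpha^{s_-}$ near $0$ (being strictly more singular than every other contribution it cannot be cancelled, and $I_{n+1}\ge0$ forbids a negative coefficient), whence $\int_0\alpha^2I_{n+1}=\infty$ as $s_-+2<-1$, contradicting the bound from the previous paragraph. Thus $I_{n+1}$ is governed by the particular solution (the remaining mode $\alpha^{s_+}$, with $s_+>\beta-2n-2$, is lower order), and propagating the $|\log\alpha|^{-\delta}$, resp.\ $\alpha^{-\delta}$, slack yields \eqref{eqlim}, resp.\ \eqref{eqlim2}. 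To make this rigorous I would compare $I_{n+1}$ with the explicit sub/supersolutions $c\,\alpha^{\beta-2n-2}|\log\alpha|^{-\delta}$ and $C\,\alpha^{\beta-2n-2-\delta}$ of $(\star)$ on intervals $[\alpha_k,\tfrac\pi2]$, using \cref{Cor:310} — exactly as in the proof of \cref{L:db1} — to select $\alpha_k\searrow0$ along which $I_{n+1}(\alpha_k)$ and $I_{n+1}'(\alpha_k)$ are controlled, and then let $k\to\infty$. The extra hypothesis $\kappa<16/3$ for the upper bound enters precisely here, ensuring the sign condition under which the comparison argument runs: the zeroth-order coefficient of \eqref{eq:Iq_ode} behaves like $(3\kappa-12-4n)\alpha^2$ near $0$, so the relevant maximum principle applies when $\kappa\le 4(n+3)/3$, the binding case along the induction being $n=1$.

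Finally I would derive the dichotomy. If $\kappa\ge8/(2n-1)$ then $\int_0^\pi I_n=\infty$ by induction, and the elementary estimate $I_{n+1}(\alpha)\ge\int_0^1u^{-n-1}\phi(\alpha,u)\,du\ge I_n(\alpha)-\int_1^4\phi(\alpha,u)\,du$ integrates to $\int_0^\pi I_{n+1}\ge\int_0^\pi I_n-1=\infty$. If $8/(2n+1)\le\kappa<8/(2n-1)$ then $\beta>2n-1$, so the previous paragraph applies and \eqref{eqlim} for $I_{n+1}$ (with $\delta<1$) forces $\int_0^{\alpha_0}I_{n+1}\ge\int_0^{\alpha_0}\alpha^{\beta-2n-2}|\log\alpha|^{-\delta}\,d\alpha=\infty$, since $\beta-2n-2\le-1$. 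If $\kappa<8/(2n+1)$ then $\beta>2n+1$ and $\kappa<8/3<16/3$, so \eqref{eqlim2} for $I_{n+1}$ gives $\int_0^{\alpha_0}I_{n+1}\le\int_0^{\alpha_0}\alpha^{\beta-2n-2-\delta}\,d\alpha<\infty$ for small $\delta$; with continuity of $I_{n+1}$ on $(0,\pi)$ and the symmetry about $\pi/2$ this gives $\int_0^\pi I_{n+1}<\infty$, completing the induction. The main obstacle is the step in the previous paragraph: turning the averaged control $\int_0\alpha^2I_{n+1}<\infty$ — which by itself only excludes the mode $\alpha^{s_-}$ — into sharp two-sided pointwise asymptotics, rigorously and while faithfully carrying the logarithmic/power slack of the inductive hypothesis; a naive bootstrap of $(\star)$ is neutral, so one genuinely needs the Fuchsian structure of the equation together with the comparison principle, and it is there that the restriction $\kappa<16/3$ is consumed.
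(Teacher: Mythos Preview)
Your approach differs substantially from the paper's, and the difference is worth spelling out. The paper works directly with the second-order ODE \eqref{eq:Iq_ode}: for the lower bound it argues by contradiction, using \cref{le:nice_subseq} to select sequences $\alpha_k\searrow 0$ along which $I_{n+1}(\alpha_k)$, $I_{n+1}'(\alpha_k)$, $I_{n+1}''(\alpha_k)$ are controlled relative to $\alpha_k^s|\log\alpha_k|^{-\delta}$, then plugs these into \eqref{eq:Iq_ode} and contradicts the inductive lower bound on $I_n$. For the upper bound it writes $I_{n+1}(\alpha)=\alpha^{s(\alpha)}$, selects sequences at local extrema, and contradicts the fact that the indicial polynomial is strictly negative on $[-3,\beta-2n-2)$. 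Your route --- integrate once to get $G(\alpha)=\int_0^\alpha(\cdots)$, identify the Fuchsian structure, and rule out the singular mode $\alpha^{s_-}$ via the integrability constraint $\int\sin^2\alpha\,I_{n+1}<\infty$ --- is conceptually cleaner and explains \emph{why} the exponent $\beta-2n-2$ appears. Your derivation of $G(0^+)=0$ is correct (since $G'\in L^1$ by \cref{L:310} and the inductive hypothesis, $G$ extends continuously to $0$, and the integral identity then forces the value), and the indicial computations are accurate.

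There is, however, a genuine gap at the crucial step. You write that you ``would compare $I_{n+1}$ with the explicit sub/supersolutions \ldots\ using \cref{Cor:310} --- exactly as in the proof of \cref{L:db1}'', but the proof of \cref{L:db1} uses \cref{Cor:310} only to kill boundary terms in an integral identity, not to run a comparison on a singular interval. A comparison principle on $(0,\pi/2)$ with a regular singular endpoint is delicate precisely because the boundary behaviour at $0$ is what you are trying to determine; the sign condition on the zeroth-order coefficient (your $3\kappa-12-4n<0$) is necessary but not sufficient by itself. Likewise, the Frobenius-type decomposition ``$I_{n+1}=c_-\alpha^{s_-}(1+o(1))+c_+\alpha^{s_+}(1+o(1))+\text{particular}$'' requires constructing a particular solution with the right asymptotics from an inhomogeneous term $nI_n$ known only up to the slack $|\log\alpha|^{-\delta}$ or $\alpha^{-\delta}$, and that construction (via variation of parameters against the Frobenius basis) is exactly the work you have not done. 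The paper's sequence-selection arguments, though less elegant, sidestep this by never needing the full asymptotic decomposition: they only test the ODE at carefully chosen points. Your plan is viable, but as written it stops at the heuristic level for the step you yourself flag as ``the main obstacle''.
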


\begin{proof}
We will prove this lemma by induction over $n$, with the case $n=1$ already established in \cref{L:db1}. 
Let us first explain the heuristic idea. Consider for simplicity the first non-trivial case $n=2$. Then approximating \eqref{eq:Iq_ode} near $\alpha \approx 0$ and knowing that $I_{1} = c_0(\sin\alpha)^{8/\kappa-2}$, the equation reads
	\[
	0 \approx c_0\alpha^{8/\kappa-2} - 4\alpha^2 \,I_{2}
	+(\kappa-4)3\alpha^2 \,I_{2} 
	+(3\kappa-4)\alpha^3 \,I_{2}'
	+\frac{\kappa}{2}\alpha^4 \,I_{2}'' .
	\]
	If we naively suppose $I_{2} \approx \alpha^s$, $I_{2}' \approx s\alpha^{s-1}$, $I_{2}'' \approx s(s-1)\alpha^{s-2}$, then we find that either $s = 8/\kappa-4$, cancelling the first term $I_{1}$, or $s < 8/\kappa-4$ in which case the remaining terms need to cancel each other. In the latter case, the coefficients need to sum to $0$, i.e.
\begin{equation}\label{eqfors}
 0 = (3\kappa-16)+(3\kappa-4)s+\frac{\kappa}{2}s(s-1).
\end{equation}
	Recall also, that by \cref{L:310} with $n=1$, we have $\int_0^\pi I_2 \sin^2\alpha\,d\alpha<\infty$, which implies $s>-3$. However, on the interval $(-3, 8/\kappa-4)$ equation \eqref{eqfors} has no solutions, and thus the case $s < 8/\kappa-4$ is not possible. Hence, the only remaining option is  $s = 8/\kappa-4$.
	
	To make this heuristic precise, we find a suitable subsequence $\alpha_k \searrow 0$ where we can apply a similar argument. 
	
Let us now proceed to the rigorous induction on $n$.

\noindent \textbf{Base case}. $n=1$. In this case \eqref{eqlim}, \eqref{eqlim2} and continuity of $I_1$ was already proven in \eqref{mainidentity}. The fact that $\int_0^\pi I_1(\alpha)\,d\alpha$ is finite if and only if $\kappa<8$ is immediate.

\noindent \textbf{Inductive step}. Suppose that the statement of the lemma is valid for $n\in\Z_+$. Let us prove it for $n+1$. 

If $\kappa\ge 8/(2n-1)$, then $\int_0^\pi I_n(\alpha)\,d\alpha=\infty$, and this obviously implies that  $\int_0^\pi I_{n+1}(\alpha)\,d\alpha=\infty$. Therefore it is sufficient to consider the case $\kappa<8/(2n-1)$. By the inductive step, for these values of $\kappa$ we have $\int_0^\pi I_n(\alpha)\,d\alpha<\infty$. Hence, \cref{L:310} implies that condition \eqref{eq:ass_I-1} holds. This, together with continuity of $I_n$, shows that all the conditions of \cref{L:mainlemma} are met. Note that we cannot have $I_{n+1}=\infty$ for all $\alpha\in(0,\pi)$. Indeed, in this case the left-hand side of identity \eqref{goodbound} would be infinite but the right-hand side of this identity is finite (because it is equal to $C\int_0^\pi I_n(\alpha)\,d\alpha$).  
Thus, \cref{L:mainlemma} implies that 
\begin{equation}\label{td}
\text{$I_{n+1}$ is twice differentiable and satisfies 
\eqref{eq:Iq_ode}}.
\end{equation}
Using this, we now show \eqref{eqlim} and \eqref{eqlim2}. The statement about the finiteness of $\int I_n\,d\alpha$ follows immediately.

\textbf{Lower bound.}
We begin with the lower bound   \eqref{eqlim}. Denote
\begin{equation}\label{defs}
s:=\frac{8}{\kappa}-2n-2 .
\end{equation}
Fix $\delta\in(0,1)$ and suppose that the lower bound does not hold, i.e. we have $I_{n+1}(\wt\alpha_k) < \tilde\alpha_k^s\abs{\log\tilde\alpha_k}^{-\delta}$ for a sequence of $\wt\alpha_k \searrow 0$. We distinguish two cases.

\textbf{Case 1.1.} $I_{n+1}(\alpha) \le \alpha^s\abs{\log\alpha}^{-\delta}$ for all small $\alpha>0$. We apply \cref{le:nice_subseq} with $f(\alpha): = \partial_\alpha(\alpha^{-s}I_{n+1}(\alpha))$, $h(\alpha) = \alpha^{-1}\abs{\log\alpha}^{-\delta}$. It is easy to see that all the conditions of the lemma are satisfied, 
and therefore there exists a sequence of $\alpha_k \searrow 0$ such that (for some $C<\infty$)
\begin{align*}
\abs{I_{n+1}(\alpha_k)} &\le \alpha_k^s\abs{\log\alpha_k}^{-\delta},\\
\abs{I_{n+1}'(\alpha_k)} &\le C\alpha_k^{s-1}\abs{\log\alpha_k}^{-\delta},\\
I_{n+1}''(\alpha_k) &\ge -C\alpha_k^{s-2}\abs{\log\alpha_k}^{-\delta}.
\end{align*}
Plugging this into \eqref{eq:Iq_ode} we derive
\begin{align}
0 &=nI_n(\alpha_k) +\sin^2\alpha_k(3\kappa-4n-12+(16-4\kappa)\sin^2\alpha_k) \,I_{n+1}(\alpha_k)\nn \\
&\hphantom{=}+(3\kappa-4)\sin^3\alpha_k\cos\alpha_k\,I_{n+1}'(\alpha_k)
+\frac{\kappa}{2}\sin^4\alpha_k \,I_{n+1}''(\alpha_k) \nn\\
&\ge nI_n (\alpha_k)+\sin^2\alpha_k[(3\kappa-4n-12+(16-4\kappa)\sin^2\alpha_k)\wedge0]
\,\alpha_k^s\abs{\log\alpha_k}^{-\delta} \nn\\
&\hphantom{=}-C \sin^3\alpha_k\cos\alpha_k\, \alpha_k^{s-1}\abs{\log\alpha_k}^{-\delta}
-C \sin^4 \alpha_k\, \alpha_k^{s-2}\abs{\log\alpha_k}^{-\delta}\label{lowerbound}.
\end{align}
Multiplying \eqref{lowerbound} by $\alpha^{-s-2}\abs{\log\alpha}^\delta$ and passing to the limit as  $\alpha_k \searrow 0$, we get
\begin{equation}\label{lowerbound2}
-C+n\liminf_{\alpha\to0} \frac{I_n(\alpha)\abs{\log\alpha}^\delta}{\alpha^{s+2}}\le0
\end{equation}
By induction hypothesis (applied with $\delta/2$ in place of $\delta$),  $I_n(\alpha)\alpha^{-s-2}\abs{\log\alpha}^\delta \to \infty$. This contradicts \eqref{lowerbound2}.
Therefore it cannot be that $I_{n+1}(\alpha) \le \alpha^s\abs{\log\alpha}^{-\delta}$ for all small $\alpha$.

\textbf{Case 1.2.} In the other case one can find two sequences  $\wt\alpha_k, \dbtilde\alpha_k \searrow 0$ such that $\dbtilde\alpha_{k+1}\le \wt\alpha_k \le \dbtilde\alpha_{k}$ and 
$I_{n+1}(\wt\alpha_k) < \tilde\alpha_k^s\abs{\log\tilde\alpha_k}^{-\delta}$,  $I_{n+1}(\dbtilde\alpha_k) > \dbtilde\alpha_k^s\abs{\log\dbtilde\alpha_k}^{-\delta}$. Pick $\alpha_k \in \argmin_{[\dbtilde\alpha_{k+1},\dbtilde\alpha_{k}]}(I_{n+1}(\alpha)-\alpha^s\abs{\log\alpha}^{-\delta}$). Then $\alpha_k\in(\dbtilde\alpha_{k+1},\dbtilde\alpha_{k})$ and 
\begin{align*}
\abs{I_{n+1}(\alpha_k)} &< \alpha_k^s\abs{\log\alpha_k}^{-\delta},\\
I_{n+1}'(\alpha_k) &= s\alpha_k^{s-1}\abs{\log\alpha_k}^{-\delta}+o(\alpha_k^{s-1}\abs{\log\alpha_k}^{-\delta}),\\
I_{n+1}''(\alpha_k) &\ge s(s-1)\alpha_k^{s-2}\abs{\log\alpha_k}^{-\delta}+o(\alpha_k^{s-2}\abs{\log\alpha_k}^{-\delta}).
\end{align*}
This implies that \eqref{lowerbound} holds for this sequence $(\alpha_k)$, which again leads to a contradiction. 

Thus, we have shown that $I_{n+1}(\alpha)\ge\alpha^{s}\abs{\log\alpha}^{-\delta}$ for all small enough $\alpha$. Recalling the definition of $s$ in \eqref{defs}, we see that this is exactly the desired lower bound in \eqref{eqlim}. This bound implies that for $\kappa\ge 8/(2(n+1)-1)=8/(2n+1)$ we have $\int_0^\pi I_{n+1}(\alpha)\,d\alpha=\infty$.

\textbf{Upper bound.}
Now we proceed with the upper bound in \eqref{eqlim2}. We suppose now that $\kappa<\frac8{2n-1}\wedge\frac{16}3$. We use again notation \eqref{defs}. Write $I_{n+1}(\alpha) =: \alpha^{s(\alpha)}$ for $\alpha \in (0,\pi)$. We will distinguish two cases.

\textbf{Case 2.1.} Suppose that 
\begin{equation*}
\liminf_{\alpha\to0} s(\alpha)<\limsup_{\alpha\to0} s(\alpha).
\end{equation*}
We show that this is impossible by deriving a contradiction.

Note that by Step~1, $\limsup_{\alpha\to0} s(\alpha)\le s$. Further, there exists  a sequence $\beta_k\searrow0$, such that $s(\beta_k)>-3$. Indeed, otherwise the left-hand side of identity \eqref{goodbound} would be infinite whilst the right-hand side of this identity is finite thanks to the induction hypothesis.
Therefore, by continuity of $s(\alpha)$ there exists $r\in[-3,s)$, and sequences  $\wt\alpha_k, \dbtilde\alpha_k \searrow 0$ such that $\dbtilde\alpha_{k+1}\le \wt\alpha_k \le \dbtilde\alpha_{k}$ and 
$s(\wt \alpha_{k})< r$,  $s(\dbtilde \alpha_{k})> r$. Pick now 
\[
\alpha_k \in \argmax_{[\dbtilde\alpha_{k+1},\dbtilde\alpha_{k}]}(I_{n+1}(\alpha)-\alpha^{r}).
\]
Then $\alpha_k\in(\dbtilde\alpha_{k+1},\dbtilde\alpha_{k})$ and 
\begin{align*}
I_{n+1}(\alpha_k) &> \alpha_k^{r},\\
I_{n+1}'(\alpha_k) &= r\alpha_k^{r-1},\\
I_{n+1}''(\alpha_k) &\le r(r-1)\alpha_k^{r-2}.
\end{align*}
Substituting this into \eqref{eq:Iq_ode}, dividing it by $\alpha_k^{r+2}$ and letting $\alpha_k \searrow 0$, we get
\begin{equation*}
(3\kappa-4n-12)+(3\kappa-4) r +\frac\kappa2 r(r-1)+
n\limsup_{\alpha\to0} \frac{I_n(\alpha)}{\alpha^{r+2}}\ge0.
\end{equation*}
(Here we have used $\kappa \le 16/3$, implying $3\kappa-4n-12+(16-4\kappa)\sin^2\alpha_k < 0$).

Since $r+2 < s+2 = 8/\kappa-2n$, we have $ \frac{I_n(\alpha)}{\alpha^{r+2}}\to0$ by the induction hypothesis. Hence,
\begin{equation}\label{feqc21}
(3\kappa-4n-12)+(3\kappa-4) r +\frac\kappa2 r(r-1)\ge0.
\end{equation}
Recall that $r\in[-3,s)$. Note that the left-hand side of the above expression is strictly negative for $r=-3$ and for $r=s$; in the latter case it equals $n (\kappa (2 n - 1) - 12) < 0$ thanks to our standing assumption $\kappa<8/(2n-1)$. Hence the left-hand side of \eqref{feqc21} is strictly negative for any  $r\in[-3,s)$ which is a contradiction. 

\textbf{Case 2.2.} It follows from above that $\liminf_{\alpha\to0} s(\alpha)=\limsup_{\alpha\to0}s(\alpha)=:r$ and $r\in[-3,s]$. We would like to show $r=s$ which is \eqref{eqlim2}.

Suppose $r<s$. Note that \eqref{td} implies that $s(\alpha)$ is twice differentiable.
Therefore, all the conditions of \cref{le:nice_subseq} are satisfied for the functions  $f(\alpha):=s'(\alpha)$, $h(\alpha):=\frac{1}{\alpha\abs{\log\alpha}\log\abs{\log\alpha}}$. 
Thus there exists a sequence $\alpha_k\searrow0$ such that $|s'(\alpha_k)|\le \frac{1}{\alpha_k\abs{\log\alpha_k}\log\abs{\log\alpha_k}}$ and 
$s''(\alpha_k)\ge -\frac{1}{\alpha_k^2\abs{\log\alpha_k}\log\abs{\log\alpha_k}}$.

Recalling that 
	\begin{align*}
		I_{n+1}'(\alpha) &= \left(\frac{s(\alpha)}{\alpha}+s'(\alpha)\log\alpha\right)\alpha^{s(\alpha)},\\
		I_{n+1}''(\alpha) &= \left(-\frac{s(\alpha)}{\alpha^2}+2\frac{s'(\alpha)}{\alpha}+s''(\alpha)\log\alpha+\left(\frac{s(\alpha)}{\alpha}+s'(\alpha)\log\alpha\right)^2\right)\alpha^{s(\alpha)},
	\end{align*}
we get
\begin{align*}
I_{n+1}(\alpha_k) &= \alpha_k^{s(\alpha_k)},\\
I_{n+1}'(\alpha_k)&=(s(\alpha_k)+o(1))\alpha_k^{s(\alpha_k)-1},\\
I_{n+1}''(\alpha_k) &\le (-s(\alpha_k)+s(\alpha_k)^2+o(1))\alpha_k^{s(\alpha_k)-2},
\end{align*}
where $o(1)$ denote some sequences that tend to $0$ as $k \to \infty$.
Now we substitute this into \eqref{eq:Iq_ode}, divide it by $\alpha_k^{s(\alpha_k)+2}$ and let $\alpha_k \searrow 0$.
We derive
\begin{equation}\label{result4}
(3\kappa-4n-12)+(3\kappa-4)r+\frac{\kappa}{2}r(r-1)+n\limsup_{\alpha\to0} \frac{I_n(\alpha)}{\alpha^{s(\alpha)+2}}\ge0.
\end{equation} 
If now $r<s$, then there exists $\delta > 0$ such that $s(\alpha)\le r+\delta<s$ for all $\alpha$ small enough. Hence, thanks to the induction hypothesis, $\limsup_{\alpha\to0} \frac{I_n(\alpha)}{\alpha^{s(\alpha)+2}}=0$. 
Therefore inequality \eqref{feqc21} holds for a certain $r\in[-3,s)$ which is a contradiction as before. 

Thus we have shown that $\liminf_{\alpha\to0}s(\alpha)=s$. Therefore, $I_{n+1}(\alpha)=\alpha^{s(\alpha)}\le \alpha^{s-\delta}$ for all $\alpha$ small enough, so the upper bound \eqref{eqlim2} holds. Hence for $\kappa< 8/(2(n+1)-1)=8/(2n+1)$ we have $\int_0^\pi I_{n+1}(\alpha)\,d\alpha<\infty$.
\end{proof}

Now we are ready to complete the proof of \cref{T:other}

\begin{proof}[Proof of  \cref{T:other}(ii)]
Inequality \eqref{ineq24} follows directly from \cref{le:I-2_asymptotics} and the definition of $I_n$. Further, for $\kappa<8$ we have from \cref{L:db1}:
\begin{equation*}
\E (\Im \gamma_1)^{-2}=\int_0^\pi\int_0^4 \frac1u\phi(\alpha,u)\,d\alpha du =
\frac{2}{8-\kappa},
\end{equation*}
which is \eqref{ineq25}.

To show \eqref{ineq26}, fix $\kappa<8/3$. Note that in this regime by \cref{le:I-2_asymptotics}, we have $\int_0^1 I_2(\alpha)<\infty$, and thus by \cref{Cor:310} with $f=I_2$, $h=1/(\alpha |\log \alpha|)$ there exists a sequence $\alpha_k\searrow 0$ such that 
\begin{align}
\lim_{\alpha_k\searrow 0} \alpha_k I_{2}(\alpha_k) = 0 ,\label{lim41}\\
\lim_{\alpha_k\searrow 0} \alpha_k^2 I_{2}'(\alpha_k) = 0\label{lim42} .
\end{align}
It was shown in the proof of \cref{le:I-2_asymptotics}, that in this case $I_2$ satisfies  \eqref{eq:Iq_ode} which can be rewritten as
\begin{equation}\label{eq:Iq_ode2}
\frac{I_1}{\sin^2\alpha} - 4 I_{2}
		+(\kappa-4)(3-4\sin^2\alpha) I_{2} 
		+(3\kappa-4)\sin\alpha\cos\alpha\, I_{2}'
		+\frac{\kappa}{2}\sin^2\alpha\, I_{2}''=0.
\end{equation}
Integrate now the above equation in $\alpha$ from $\alpha_k$ to $\pi-\alpha_k$, then integrate by parts. Thanks to \eqref{lim41} and \eqref{lim42}, all the boundary terms vanish when we send $\alpha_k \searrow 0$.
Note also that by \cref{L:310}, we have $\int_0^\pi I_2(\alpha)\sin^2\alpha\,d\alpha=\frac14 \int_0^\pi I_1(\alpha)\,d\alpha$. We get
\begin{equation}\label{relation}
\int_0^\pi \frac{I_{1}(\alpha)}{\sin^2 \alpha}\,d\alpha+(\kappa-12)\int_0^\pi I_{2}(\alpha)\,d\alpha +2\int_0^\pi I_{1}(\alpha)\,d\alpha=0.
\end{equation}
Recalling the expression for $I_1$ from \cref{L:db1}, we deduce
\begin{equation*}
\E(\Im\gamma_1)^{-4} = \int_0^\pi I_2(\alpha)\,d\alpha  = \frac{48-16\kappa}{(12-\kappa)(8-\kappa)(8-3\kappa)}.\qedhere
\end{equation*}
\end{proof}
\begin{remark}
For general $n$, the identity \eqref{relation} reads
\[ (4n+8-\kappa) \int_0^\pi I_{n+1}(\alpha)\,d\alpha = n \int_0^\pi \frac{I_n(\alpha)}{\sin^2 \alpha}\,d\alpha + 2 \int_0^\pi I_n(\alpha)\,d\alpha . \]
Unfortunately, we do not have an explicit formula for $\int_0^\pi \frac{I_{n}(\alpha)}{\sin^2 \alpha}\,d\alpha$ for $n\ge2$. This prevents us from getting explicit formulas of negative moments of $\Im(\gamma_1)$ of higher order.
\end{remark}

\begin{remark}
Another possible approach to find explicit formulas for $I_n$ would be through its Fourier coefficients
\[ a_0 = \frac{1}{\pi}\int_0^\pi I_n\,d\alpha , \quad a_j = \frac{2}{\pi}\int_0^\pi I_n\cos(2j\alpha)\,d\alpha . \]
Formally expanding \eqref{eq:Iq_ode2}, we obtain a (countable) system of linear equations for $(a_j)_{j \ge 0}$ in terms of the Fourier coefficients $(b_j)_{j \ge 0}$ of the function $I_{n-1}/\sin^2\alpha$. However, it seems difficult to solve the system of equations explicitly. Only for $a_0,a_1$ we get a system of two equations in terms of $b_0,b_1$ which correspond exactly to what we obtain from the proof above.
\end{remark}

%\bibliographystyle{alpha1}
%\bibliography{paper}

\begin{thebibliography}{BKRS15}
	
	\bibitem[Ale58]{Aleks}
	A.~D. Aleksandrov.
	\newblock Investigations on the maximum principle. {I}.
	\newblock {\em Izv. Vys\v{s}. U\v{c}ebn. Zaved. Matematika}, 1958(5):126--157,
	1958.
	
	\bibitem[BKRS15]{BKRS}
	Vladimir~I. Bogachev, Nicolai~V. Krylov, Michael R\"{o}ckner, and Stanislav~V.
	Shaposhnikov.
	\newblock {\em Fokker-{P}lanck-{K}olmogorov equations}, volume 207 of {\em
		Mathematical Surveys and Monographs}.
	\newblock American Mathematical Society, Providence, RI, 2015.
	
	\bibitem[BRS21]{BRS}
	Vladimir~I. Bogachev, Michael R\"{o}ckner, and Stanislav~V. Shaposhnikov.
	\newblock On the {A}mbrosio-{F}igalli-{T}revisan superposition principle for
	probability solutions to {F}okker-{P}lanck-{K}olmogorov equations.
	\newblock {\em J. Dynam. Differential Equations}, 33(2):715--739, 2021.
	
	\bibitem[DP06]{DPG}
	Giuseppe Da~Prato.
	\newblock {\em An introduction to infinite-dimensional analysis}.
	\newblock Universitext. Springer-Verlag, Berlin, 2006.
	\newblock Revised and extended from the 2001 original by Da Prato.
	
	\bibitem[Eva98]{Evans}
	Lawrence~C. Evans.
	\newblock {\em Partial differential equations}, volume~19 of {\em Graduate
		Studies in Mathematics}.
	\newblock American Mathematical Society, Providence, RI, 1998.
	
	\bibitem[Fre71]{Freedman}
	David Freedman.
	\newblock {\em Brownian motion and diffusion}.
	\newblock Holden-Day, San Francisco, Calif.-Cambridge-Amsterdam, 1971.
	
	\bibitem[Hai08]{HaiPDE}
	Martin Hairer.
	\newblock {\em Ergodic theory for Stochastic PDEs}.
	\newblock Lecture notes, Imperial College London. Available at
	\texttt{http://www.hairer.org/notes/Imperial.pdf}, 2008.
	
	\bibitem[Hai11]{Hair11}
	Martin Hairer.
	\newblock On {M}alliavin's proof of {H}\"{o}rmander's theorem.
	\newblock {\em Bull. Sci. Math.}, 135(6-7):650--666, 2011.
	
	\bibitem[Hil70]{Hill}
	C.~Denson Hill.
	\newblock A sharp maximum principle for degenerate elliptic-parabolic
	equations.
	\newblock {\em Indiana Univ. Math. J.}, 20:213--229, 1970.
	
	\bibitem[IW89]{Ikedawatanabe}
	Nobuyuki Ikeda and Shinzo Watanabe.
	\newblock {\em Stochastic differential equations and diffusion processes},
	volume~24 of {\em North-Holland Mathematical Library}.
	\newblock North-Holland Publishing Co., Amsterdam; Kodansha, Ltd., Tokyo,
	second edition, 1989.
	
	\bibitem[JVL12]{viklund2012almost}
	Fredrik Johansson~Viklund and Gregory~F. Lawler.
	\newblock Almost sure multifractal spectrum for the tip of an {SLE} curve.
	\newblock {\em Acta Math.}, 209(2):265--322, 2012.
	
	\bibitem[Kal96]{olav}
	Olav Kallenberg.
	\newblock On the existence of universal functional solutions to classical
	{SDE}'s.
	\newblock {\em Ann. Probab.}, 24(1):196--205, 1996.
	
	\bibitem[KS91]{KS}
	Ioannis Karatzas and Steven~E. Shreve.
	\newblock {\em Brownian motion and stochastic calculus}, volume 113 of {\em
		Graduate Texts in Mathematics}.
	\newblock Springer-Verlag, New York, second edition, 1991.
	
	\bibitem[Kem17]{kemppainen2017schramm}
	Antti Kemppainen.
	\newblock {\em Schramm-{L}oewner evolution}, volume~24 of {\em SpringerBriefs
		in Mathematical Physics}.
	\newblock Springer, Cham, 2017.
	
	\bibitem[Law05]{lawler2005conformally}
	Gregory~F. Lawler.
	\newblock {\em Conformally invariant processes in the plane}, volume 114 of
	{\em Mathematical Surveys and Monographs}.
	\newblock American Mathematical Society, Providence, RI, 2005.
	
	\bibitem[LSW04]{lsw2004conformal}
	Gregory~F. Lawler, Oded Schramm, and Wendelin Werner.
	\newblock Conformal invariance of planar loop-erased random walks and uniform
	spanning trees.
	\newblock {\em Ann. Probab.}, 32(1B):939--995, 2004.
	
	\bibitem[LMN19]{lyons2019convergence}
	Terry~J Lyons, Vlad Margarint, and Sina Nejad.
	\newblock {Convergence to closed-form distribution for the backward
		$SLE_{\kappa}$ at some random times and the phase transition at $\kappa=8$}.
	\newblock {\em arXiv preprint arXiv:1910.05519}, 2019.
	
	\bibitem[MSH02]{MSH}
	J.~C. Mattingly, A.~M. Stuart, and D.~J. Higham.
	\newblock Ergodicity for {SDE}s and approximations: locally {L}ipschitz vector
	fields and degenerate noise.
	\newblock {\em Stochastic Process. Appl.}, 101(2):185--232, 2002.
	
	\bibitem[OR73]{OR73}
	O.~A. Oleinik and E.~V. Radkevich.
	\newblock {\em Second order equations with nonnegative characteristic form}.
	\newblock Plenum Press, New York-London, 1973.
	
	\bibitem[Pav14]{Pavl}
	Grigorios~A. Pavliotis.
	\newblock {\em Stochastic processes and applications. Diffusion processes, the
		Fokker-Planck and Langevin equations}, volume~60 of {\em Texts in Applied
		Mathematics}.
	\newblock Springer, New York, 2014.
	
	\bibitem[RS05]{rs2005basic}
	Steffen Rohde and Oded Schramm.
	\newblock Basic properties of {SLE}.
	\newblock {\em Ann. of Math. (2)}, 161(2):883--924, 2005.
	
	\bibitem[Sch00]{schramm2000scaling}
	Oded Schramm.
	\newblock Scaling limits of loop-erased random walks and uniform spanning
	trees.
	\newblock {\em Israel J. Math.}, 118:221--288, 2000.
	
	\bibitem[SS09]{schramm2009contour}
	Oded Schramm and Scott Sheffield.
	\newblock Contour lines of the two-dimensional discrete {G}aussian free field.
	\newblock {\em Acta Math.}, 202(1):21--137, 2009.
	
	\bibitem[Smi01]{smirnov2001critical}
	Stanislav Smirnov.
	\newblock Critical percolation in the plane: conformal invariance, {C}ardy's
	formula, scaling limits.
	\newblock {\em C. R. Acad. Sci. Paris S\'{e}r. I Math.}, 333(3):239--244, 2001.
	
	\bibitem[Smi10]{smirnov2010conformal}
	Stanislav Smirnov.
	\newblock Conformal invariance in random cluster models. {I}. {H}olomorphic
	fermions in the {I}sing model.
	\newblock {\em Ann. of Math. (2)}, 172(2):1435--1467, 2010.
	
	\bibitem[Str08]{StPDE}
	Daniel~W. Stroock.
	\newblock {\em Partial differential equations for probabilists}, volume 112 of
	{\em Cambridge Studies in Advanced Mathematics}.
	\newblock Cambridge University Press, Cambridge, 2008.
	
	\bibitem[VO16]{VO16}
	Marcelo Viana and Krerley Oliveira.
	\newblock {\em Foundations of ergodic theory}, volume 151 of {\em Cambridge
		Studies in Advanced Mathematics}.
	\newblock Cambridge University Press, Cambridge, 2016.
	
	\bibitem[Yua22]{yuan2022topology}
	Yizheng Yuan.
	\newblock Topological characterisations of {L}oewner traces.
	\newblock {\em Indiana Univ. Math. J.}, 71(3):1027--1046, 2022.
	
	\bibitem[Zha16]{zhan2016ergodicity}
	Dapeng Zhan.
	\newblock Ergodicity of the tip of an {SLE} curve.
	\newblock {\em Probab. Theory Related Fields}, 164(1-2):333--360, 2016.
	
	\bibitem[Zha19]{zhan2019decomposition}
	Dapeng Zhan.
	\newblock Decomposition of {S}chramm-{L}oewner evolution along its curve.
	\newblock {\em Stochastic Process. Appl.}, 129(1):129--152, 2019.
	
\end{thebibliography}

\end{document}